\documentclass[12pt,reqno]{amsart}
\usepackage[T1]{fontenc}
\usepackage[utf8]{inputenc}
\usepackage{lmodern,microtype}
\usepackage[margin=29mm]{geometry}
\usepackage{amsmath,amssymb,amsthm,mathtools}
\usepackage{tikz-cd}
\usepackage{needspace}
\usepackage{enumitem,booktabs,tabularx}
\usepackage{xcolor}
\usepackage[colorlinks=true,linkcolor=blue,citecolor=blue,urlcolor=blue]{hyperref}
\hypersetup{bookmarksdepth=2,bookmarksopen=true,bookmarksopenlevel=1}

\makeatletter
\renewcommand{\paragraph}{\@startsection{paragraph}{4}%
  \z@{.35\linespacing\@plus.15\linespacing}{-.5em}%
  {\normalfont\bfseries}}
\makeatother
\numberwithin{equation}{section}
\newtheorem{theorem}{Theorem}[section]
\newtheorem{proposition}[theorem]{Proposition}
\newtheorem{lemma}[theorem]{Lemma}
\newtheorem{corollary}[theorem]{Corollary}
\newtheorem{mainresult}{Main result}

\theoremstyle{definition}

\newtheorem{example}[theorem]{Example}
\theoremstyle{remark}

\newcommand{\R}{\mathbb R}\newcommand{\C}{\mathbb C}\newcommand{\Q}{\mathbb Q}
\newcommand{\N}{\mathbb N}\newcommand{\A}{\mathbb A}
\newcommand{\E}{\mathbb E}\newcommand{\dR}{\mathrm{dR}}
\newcommand{\B}{\mathrm B}
\newcommand{\Cat}{\mathcal C}
\newcommand{\Ind}{\operatorname{Ind}}\newcommand{\Aut}{\operatorname{Aut}}
\newcommand{\Isom}{\operatorname{Isom}}\newcommand{\Span}{\operatorname{span}}
\newcommand{\per}{\operatorname{per}}\newcommand{\ev}{\operatorname{ev}}\newcommand{\Sym}{\operatorname{Sym}}

\newcommand{\norm}[1]{\left\lVert#1\right\rVert}
\newcommand{\pair}[2]{\langle #1,#2\rangle}
\newcommand{\mcP}{\mathcal P}\newcommand{\mcB}{\mathcal B}
\newcommand{\mcI}{\mathcal I}
\newcommand{\Hopf}{\mathcal H_{\dR}}

\begin{document}
\title[Sequential exponential-motivic periods]{Sequential exponential-motivic periods in spatially cut-off $\phi^4_2$}

\author[J. C. Sampedro]{Juan Carlos Sampedro} \thanks{This work has been supported by the Ministry of Science and Innovation of Spain under the
	Research Grants PID2025-171414NA-I00 and PID2024-155890NB-I00}
\address{Departamento de Matemática Aplicada y Ciencias de la Computación \\
	Avenida de los Castros 46 \\
	Universidad de Cantabria (UC) \\
	Santander, 39005, Spain.}
\email{juancarlos.sampedro@unican.es}

\begin{abstract}
We represent constructive integrals in spatially cut-off $\phi^4_2$ by sequences of exponential-motivic periods. For a nonzero spatial cutoff, a single Galois transformation makes the partition periods diverge in modulus along every sequence of approximations satisfying the constructive estimates. For a fixed cutoff presentation, we characterize the subgroup of transformations admitting a bounded realization on the free field's $L^1$ space and prove that all fixed-observable limits on this subgroup are a common scalar multiple of their physical values. A one-variable quartic example shows that all transformed fixed-observable sequences can converge without such a bounded realization, with limits that are not proportional to the physical values.
\end{abstract}
\smallskip
\keywords{Exponential motives, sequential periods, motivic Galois action, constructive quantum field theory, Tannakian categories, Wick renormalization}
\subjclass[2020]{Primary 81T08; Secondary 14F40, 18M25, 81T27, 14C15}

\maketitle

\setcounter{tocdepth}{1}
\tableofcontents

\section{Introduction}\label{sec:introduction}

Finite Gaussian integrals with polynomial interactions are periods of exponential motives. In constructive field theory, however, the fact that these integrals converge as the cutoff is removed does not identify the corresponding period symbols. The starting point of this paper is to keep those symbols rather than collapse them to their numerical values, and to ask what remains of their Galois structure in the constructive limit.

We work with the massive Euclidean free field on the plane, a fixed smooth nonnegative compactly supported spatial cutoff $g$, and a fixed positive quartic coupling. If $\mu_C$ is the free Gaussian measure and $V_g$ the Wick-renormalized quartic interaction, we consider
\begin{equation}\label{eq:target}
\mcI_g(q)=\int_{\mathcal S'(\mathbb R^2)}q(\phi)e^{-V_g(\phi)}\,d\mu_C(\phi),
\end{equation}
for fixed cylindrical polynomial observables $q$, that is, polynomials in finitely many smeared field variables. The value $\mcI_g(1)$ is the interacting partition function and is not used to normalize the functional. We use the spatially cut-off construction of Glimm and Jaffe \cite{GJ} and Simon \cite{Simon}, keeping both the spatial cutoff and the coupling fixed while removing the ultraviolet regularization and an auxiliary finite-dimensional discretization. The model requires Wick renormalization, while its ultraviolet limit is under constructive control. Keeping $g$ fixed also separates this problem from the additional infrared limit in which the spatial cutoff is removed.

Following \cite{Sampedro}, we smooth the field, replace the spatial integral by a finite Riemann sum, and refine these approximations together. This gives finite standard Gaussian spaces with polynomial interactions $f_r$ and representatives $q_r$ of $q$. The estimates in Section~\ref{sec:gaussian} allow us to choose the refinements so that $f_r\to V_g$ in $L^2(\mu_C)$ and $q_r e^{-f_r}\to qe^{-V_g}$ in $L^1(\mu_C)$. Hence
\begin{equation}\label{eq:introfinite}
\lim_{r\to+\infty}\int q_r e^{-f_r}\,d\gamma_r=\mcI_g(q),
\end{equation}
where $\gamma_r$ denotes the standard Gaussian measure at level $r$. Each integral on the left is the numerical value of an exponential-motivic period symbol $p_r^{\mathfrak m}(q)$, normalized by dividing out the free Gaussian integral. Rather than trying to identify these symbols at different cutoffs, we keep the sequence
\begin{equation}\label{eq:introsymbol}
\bigl(p_r^{\mathfrak m}(q)\bigr)_{r\ge1}
\end{equation}
modulo equality for all sufficiently large $r$, and call the resulting class a sequential exponential-motivic period. Fixing the observable before the cutoff varies prevents the sequential algebra from acquiring arbitrary cutoff-dependent scalars.

\begin{mainresult}[Sequential symbols and physical evaluation]\label{main:construction}
For the fixed rational cylindrical observables of Section~\ref{subsec:observables}, the finite symbols $p_r^{\mathfrak m}(q)$ generate a $\Q$-algebra of sequences modulo eventual equality. Numerical limiting evaluation defines an algebra homomorphism
\[
\bigl[p_r^{\mathfrak m}(q)\bigr]_{r\ge1}
\longmapsto \mcI_g(q)
\]
onto the algebra generated by the constructive integrals. In particular, a polynomial relation between fixed-observable symbols that holds at every sufficiently fine cutoff also holds between their limiting numerical values. Multiplication of symbols is realized by independent replicas of the field.
\end{mainresult}

To compare the Galois symmetries at different cutoffs, we place all finite motives in a common Tannakian category, so that one Galois group acts on their realizations compatibly with tensor products and motivic maps. Although this supplies a common group, it does not provide maps between successive interacting motives. The rule that keeps a polynomial insertion unchanged, for example, need not preserve the integration-by-parts relations when the interaction changes (Section~\ref{sec:transport}), which is why we keep sequences without choosing maps between the cutoffs.

With the same Galois transformation acting at every cutoff, we can ask whether the transformed numerical periods still converge. Starting from the fixed-observable sequences and all their Galois transforms, Section~\ref{sec:seqgalois} constructs the largest subalgebra on which numerical convergence is preserved by every transformation, but for the spatial model with $g\not\equiv0$, even the partition sequence lies outside this subalgebra.

To prove this failure, we multiply each total finite-dimensional action, including its Gaussian quadratic term, by a complex parameter and let the parameter go once around zero. Transporting the integration contours along this loop defines a single Galois transformation $\sigma_{\mathrm{rot}}$, independent of the cutoff. Denote by $Z_r^{\mathrm{rot}}$ the numerical value of the transformed partition symbol and by $D_r$ the number of points in the spatial Riemann sum at level $r$ where $g(x)>0$, which are precisely the sites contributing to the interaction $f_r$.

\Needspace{9\baselineskip}
\begin{mainresult}[Divergence of transformed partition periods]\label{main:divergence}
Suppose $g\not\equiv0$. Along every joint refinement satisfying the constructive estimates of Section~\ref{sec:gaussian},
\[
\left|Z_r^{\mathrm{rot}}\right|
\ge \exp\!\left(D_r-\sqrt{6D_r}\,\|f_r\|_{L^2(\mu_C)}\right)
\longrightarrow+\infty,
\]
although the original partition periods converge to $\mcI_g(1)$.
\end{mainresult}
The bound is proved in Theorem~\ref{thm:spatial-divergence}, and its divergence follows from the growth of $D_r$ and boundedness of $\|f_r\|_2$. Consequently, the Galois action at finite cutoff cannot restrict to the full algebra generated by the fixed-observable sequences, since every element of that algebra has convergent numerical periods. Section~\ref{sec:seqgalois} also treats the coaction, the map that records this Galois action algebraically, and explains why it cannot restrict to that algebra either.

For an individual Galois transformation, a sufficient condition for convergence is that a single bounded linear functional on $L^1(\mu_C)$ reproduce all its finite-cutoff periods. We call this condition \emph{physical $L^1$-continuity}, with the functional applied at each level to the polynomial insertion multiplied by the interaction weight. Such a functional must respect the integration-by-parts relations, since at a finite cutoff with total action $S$ the polynomials $\partial_iP-(\partial_iS)P$ integrate to zero against $e^{-S}dx$ and represent zero in the corresponding cohomology. We call these polynomials \emph{twisted-exact insertions} and denote by $\mathcal R_{\mathrm{phys}}$ their complex linear span after multiplication by the interaction weights and realization as functions on the common Gaussian field. Its elements have zero mean, and the next result shows that they are dense among all integrable functions of zero mean.

\begin{mainresult}[Saturation and physical $L^1$-continuity]\label{main:saturation}
The realized exact relations are dense in the entire mean-zero subspace, namely
\[
\overline{\mathcal R_{\mathrm{phys}}}^{\,L^1(\mu_C)}
=\left\{F\in L^1(\mu_C):\int F\,d\mu_C=0\right\}.
\]
Consequently, any continuous complex-linear functional $L$ on $L^1(\mu_C)$ that vanishes on these relations is a scalar multiple of integration, so
\[
L(F)=c\int F\,d\mu_C\qquad(F\in L^1(\mu_C)).
\]
\end{mainresult}

The transformations admitting a bounded realization form a subgroup, characterized in Corollary~\ref{cor:physical-subgroup} by preservation of the physical integration covectors up to a common scalar. It is the largest subgroup satisfying physical $L^1$-continuity for the full chosen presentation, and every fixed-observable limit on it equals $c\mcI_g(q)$, with the same nonzero scalar $c$ for all $q$.

The quartic model of Section~\ref{subsec:quarticprofile} shows that convergence alone need not force scalar limiting evaluations. Keeping the interaction confined to one variable and adjoining
independent Gaussian variables makes every fixed-observable symbol sequence eventually constant, so all its Galois translates have convergent numerical periods. An explicit contour rotation yields a limiting Galois evaluation whose values on two independent period symbols are not proportional to their physical values. By the preceding rigidity result, this evaluation admits no bounded realization on the physical $L^1$ space.

The distinction between period symbols and their numerical values follows the perspective emphasized by Cartier \cite[Secs.~8--9]{Cartier}. Related Galois structures appear in Connes--Marcolli's perturbative renormalization theory \cite{ConnesMarcolli} and Brown's motivic graph periods \cite{BrownCosmic}. Rosen's sequential periods \cite{Rosen} also retain arithmetic data along a sequence, although their index is crystalline Frobenius rather than an analytic cutoff. In the present setting the exponential interaction is kept at fixed coupling, so comparison with these constructions would require a relation between the corresponding symbols and Galois operations, not merely equality of numerical integrals.

For finite lattices, Weinzierl \cite{WeinzierlLattice} formulates integration-by-parts relations in twisted cohomology, and Gasparotto, Rapakoulias, and Weinzierl \cite{GRWLattice} derive differential equations in the action parameters. Our twisted-exact relations belong to the same finite-dimensional framework, but here they are used to test which transformed evaluations can extend continuously to the constructive physical space. We return to the comparison with perturbative and finite-lattice approaches in Section~\ref{subsec:cartier}.

\section{Gaussian approximation on \texorpdfstring{$\mathcal S'(\R^2)$}{tempered distributions}}\label{sec:gaussian}

In this section we approximate the constructive integrals by finite-dimensional Gaussian integrals. We first smooth the field and replace the spatial integral defining the interaction by a finite Riemann sum. We then prove convergence of the resulting integrals for each fixed polynomial observable as both approximations are refined.

We follow the construction of \cite[Sec.~5.2]{Sampedro}, starting from the constructive estimates recalled below. The Gaussian tools used in the argument are reviewed in Section~\ref{subsec:gaussian-toolbox}, following \cite[Chaps.~1--5 and~9]{Janson}, \cite{Simon}, and \cite[Sec.~5.1]{Sampedro}.

\subsection{The Gaussian field, Wiener chaos, and second quantization}\label{subsec:gaussian-toolbox}
Fix a mass $\mathfrak m>0$ and let $C=(-\Delta+\mathfrak m^2)^{-1}$. Let $H$ be the completion of $\mathcal S(\R^2;\R)$ for the covariance inner product
\[
\pair{u}{v}_H=\int_{\R^2}u(x)(Cv)(x)\,dx.
\]
On $\Omega=\mathcal S'(\R^2)$, endowed with the centered Gaussian measure $\mu_C$ of covariance $C$, put
\[
X(u)(\phi)=\phi(u),\qquad u\in\mathcal S(\R^2;\R).
\]
The map $u\mapsto X(u)$ extends by continuity to an isonormal Gaussian process on $H$, with
\[
\E[X(u)X(v)]=\pair{u}{v}_H,\qquad u,v\in H.
\]
For this Gaussian Hilbert-space construction see \cite[Chap.~1]{Janson}, and for the free measure and its marginals see \cite[Secs.~4--5]{Sampedro} and \cite{GJ,Simon}. If $(e_1,\ldots,e_d)$ is an orthonormal basis of a finite-dimensional subspace $K\subset H$, then
\[
Z_i=X(e_i),\qquad 1\le i\le d,
\]
are independent standard Gaussian variables. In these coordinates the marginal generated by $X(K)$ is
\[
d\gamma_d(z)=(2\pi)^{-d/2}e^{-|z|^2/2}\,dz
\]
on $\R^d$. Throughout, orthonormality refers to the covariance inner product on $H$, rather than the $L^2(\R^2)$ inner product.

\paragraph{Wick ordering and Wiener chaoses.}
Let $\mathcal P_{\le j}$ be the closed span in $L^2(\mu_C)$ of cylindrical polynomials of total degree at most $j$, and let $\Pi_{\le j}$ be the orthogonal projection onto it. Wick ordering removes the lower-degree component, so for $j\ge1$,
\[
:X(u_1)\cdots X(u_j):
=(\mathrm{id}-\Pi_{\le j-1})\bigl(X(u_1)\cdots X(u_j)\bigr).
\]
See \cite[Chap.~3]{Janson} and the Hermite-polynomial formulation in \cite[Prop.~4.1]{Nourdin}.

For a single Gaussian variable $G=X(u)$ of variance $c=\norm u_H^2$, write $:G^j:$ for the product with all $j$ factors equal to $G$, and put $:G^0:=1$. The first examples are
\[
:G:=G,\qquad :G^2:=G^2-c,\qquad :G^3:=G^3-3cG.
\]
In particular, the quartic expression used throughout the paper is
\begin{equation}\label{eq:wickquartic}
:X(u)^4:
=X(u)^4-6cX(u)^2+3c^2.
\end{equation}
These are the variance-rescaled Hermite polynomials. In the quartic case, subtracting the mean alone would not give orthogonality to the quadratic polynomials because the quadratic correction is essential.

The $j$th homogeneous Wiener chaos is the closed space generated by the Wick monomials of degree $j$. Equivalently, by polarization of the mixed products,
\[
\mathcal H_0=\C\,1,\qquad
\mathcal H_j=
\overline{\Span\{\,:X(u)^j:\mid u\in H\,\}}^{\,L^2(\mu_C)}
\quad(j\ge1).
\]
Orthogonality of distinct chaos orders follows from the covariance identity, valid for $u,v\in H$ and $j,\ell\ge1$,
\[
\E\bigl[:X(u)^j:\,:X(v)^\ell:\bigr]
=\begin{cases}
j!\pair{u}{v}_H^j,&j=\ell,\\
0,&j\ne\ell.
\end{cases}
\]
The Wiener--It\^o decomposition states that these spaces exhaust the Gaussian $L^2$ space, giving
\[
L^2(\mu_C)=\widehat{\bigoplus}_{j=0}^{\infty}\mathcal H_j.
\]
Both the closure and the orthogonal sum are taken in the $L^2$ norm, as in the decomposition of \cite[Chaps.~2--3]{Janson} and \cite[Sec.~4]{Nourdin}.

The Wick quartic belongs to $\mathcal H_4$ despite the lower-degree terms in its ordinary polynomial expression. Since $\mathcal H_4$ is closed, finite sums of Wick quartics and their $L^2$ limits also belong to it, as required for the spatially integrated interaction.

Write $H^{\odot j}$ for the symmetric part of $H^{\otimes j}$. The multiple Gaussian integral map $I_j:H^{\odot j}\to\mathcal H_j$ encodes the Wick polynomials by symmetric tensors, with
\begin{equation}\label{eq:wick-Ij}
:X(u)^j:=I_j(u^{\otimes j}).
\end{equation}
Polarization gives its action on finite symmetric tensors, and the Gaussian covariance identity above gives the continuous extension with normalization
\[
\norm{I_j(F)}_2^2=j!\norm F_{H^{\otimes j}}^2,
\qquad I_0(1)=1.
\]
This is the Wiener--It\^o realization \cite{Ito} in the Gaussian Hilbert-space formulation of \cite[Chaps.~4 and~7]{Janson}, with maps and tensor spaces complexified when needed. For $F\in\mathcal H_j$ and $p\ge2$, Gaussian hypercontractivity gives
\begin{equation}\label{eq:hypercontractive}
\norm F_p\le (p-1)^{j/2}\norm F_2.
\end{equation}
This is the fixed-chaos consequence of Nelson's theorem \cite{Nelson}, as presented in \cite[Chap.~5]{Janson} and \cite[Th.~4.5]{Nourdin}. Together with monotonicity of $L^p$ norms for $1\le p\le2$, this implies that $L^2$ convergence within a fixed chaos gives convergence in every finite $L^p$. The same formalism is used for $P(\phi)_2$ approximations in \cite[Secs.~5.1--5.2]{Sampedro}.

\paragraph{Second quantization.}
Let $T:H\to H$ be a contraction. Its Gaussian second quantization applies $T$ to each direction in a Wick product according to
\[
\Gamma(T):X(u_1)\cdots X(u_j):
=:X(Tu_1)\cdots X(Tu_j):,
\qquad \Gamma(T)1=1.
\]
Wick ordering uses the free Gaussian measure, with the covariance of the variables in each product. For example,
\[
\Gamma(T)\bigl(X(u)^2-\norm u_H^2\bigr)
=X(Tu)^2-\norm{Tu}_H^2.
\]
In the tensor notation this rule becomes
\begin{equation}\label{eq:secondquantization}
\Gamma(T)I_j(F)=I_j(T^{\odot j}F),\qquad F\in H^{\odot j}.
\end{equation}
Since $\norm{T^{\odot j}}\le1$, the norm identity for $I_j$ and the orthogonal chaos decomposition extend this rule to a contraction $\Gamma(T):L^2(\mu_C)\to L^2(\mu_C)$. This is the second-quantization formalism of \cite[Chaps.~4--5]{Janson} and \cite{Simon}.
If $T_n\to I_H$ strongly, then $T_n^{\odot j}\to I$ strongly on each fixed tensor power and hence
\begin{equation}\label{eq:Gamma-strong}
\Gamma(T_n)F \to F\quad\text{in }L^2(\mu_C),\qquad F\in\mathcal H_j.
\end{equation}
If $P_K$ is the orthogonal projection onto a finite-dimensional subspace $K\subset H$, then
\begin{equation}\label{eq:Gamma-conditioning}
\Gamma(P_K)F=\E[F\mid\sigma(X(K))],\qquad F\in L^2(\mu_C),
\end{equation}
the Gaussian conditional expectation identity of \cite[Chap.~9]{Janson}, \cite{Simon}, and \cite[Sec.~5.1]{Sampedro}.

For later use, suppose $T=T^*$ and $\norm T\le1$.  On a product probability space let $X'$ be an independent copy of $X$ and define
\[
Y(u)=X(Tu)+X'((I-T^2)^{1/2}u).
\]
Then $Y$ is again isonormal with the same law as $X$, and for suitable $F$ (in particular for the polynomial and exponential functions used below)
\begin{equation}\label{eq:Mehler-realization}
\Gamma(T)F(X)=\E_{X'}[F(Y)\mid X].
\end{equation}
This independent-noise representation is Mehler's formula for second quantization, whose scalar contraction $T=e^{-t}I_H$ is given in \cite[Th.~4.12]{Nourdin}. The conditional representation implies positivity, preservation of constants and integrals, and, by conditional Jensen (see \cite[Chap.~8]{Kallenberg}), the estimate
\begin{equation}\label{eq:Gamma-Jensen}
\exp\bigl(-s\Gamma(T)F\bigr)
\le \Gamma(T)(e^{-sF})
\quad(s\ge1)
\end{equation}
for real integrable $F$ with $e^{-sF}$ integrable. The right-hand side then also proves integrability of the left-hand side.

\subsection{Fixed rational observables}\label{subsec:observables}
Choose once and for all a countable family $(\xi_j)_{j\ge1}$ of real Schwartz functions whose linear span is dense in $H$.  Since the Gaussian sigma-algebra is generated by the isonormal coordinates of a dense set, the variables $X(\xi_j)$ generate the completed field sigma-algebra.  Define the algebra of actual random variables
\begin{equation}\label{eq:observables}
A_{\Q}=\Q[X(\xi_1),X(\xi_2),\ldots].
\end{equation}
An element $q\in A_{\Q}$ depends on finitely many prescribed coordinates and has rational coefficients in those variables. The covariance data and the coefficients obtained in finite orthonormal coordinates need not be algebraic over $\Q$.

Orthogonal Gaussian regression and Isserlis' moment formula \cite{Isserlis}, also called Wick's formula, show that conditioning preserves polynomial observables, as discussed in \cite[Chaps.~1, 3, and~9]{Janson}.

\begin{lemma}\label{lem:conditioning}
Let $K\subset H$ be finite-dimensional and let $q$ be a cylindrical polynomial of degree at most $d$.  Then
\[
q_K:=\E[q\mid\sigma(X(K))]
\]
is a polynomial of degree at most $d$ in any linear coordinate system on $K$.  Its coefficients are obtained from the coefficients of $q$ by field operations applied to the relevant covariance/Gram matrices.
\end{lemma}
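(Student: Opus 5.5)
The plan is to reduce to a finite-dimensional Gaussian vector and use orthogonal Gaussian regression. Write $q=Q(X(\xi_{j_1}),\ldots,X(\xi_{j_m}))$ with $Q$ a rational polynomial of degree at most $d$. Let $(k_1,\ldots,k_n)$ be any basis of $K$ and let $G=(\pair{k_a}{k_b}_H)_{a,b}$ be its Gram matrix, which is positive definite. For each $\xi=\xi_{j_\ell}$, decompose $\xi=P_K\xi+(\xi-P_K\xi)$. Then
\[
P_K\xi=\sum_{a,b}\pair{\xi}{k_b}_H\,(G^{-1})_{ba}\,k_a,
\]
so $X(\xi_{j_\ell})=\sum_a \alpha_{\ell a}Y_a+W_\ell$, where $Y_a=X(k_a)$, $W_\ell=X(\xi_{j_\ell}-P_K\xi_{j_\ell})$, and the coefficients $\alpha_{\ell a}$ are obtained by field operations from the entries of $G$ and the cross-covariances $\pair{\xi_{j_\ell}}{k_b}_H$. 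The vector $W=(W_\ell)$ is jointly Gaussian with $Y$ and uncorrelated with every $Y_a$, hence independent of $\sigma(X(K))$. Its covariance matrix
\[
\Sigma_{\ell\ell'}=\pair{\xi_{j_\ell}}{\xi_{j_{\ell'}}}_H-\sum_{a,b}\pair{\xi_{j_\ell}}{k_a}_H(G^{-1})_{ab}\pair{k_b}{\xi_{j_{\ell'}}}_H
\]
is again a field expression in the Gram data.

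Next, expand $Q(\alpha Y+W)$ by the multinomial theorem into a sum of terms (coefficient) $\times$ (monomial in $Y$) $\times$ (monomial in $W$). The total degree in $(Y,W)$ of each term is at most $d$. By independence and the freezing property of conditional expectation,
\[
\E[q\mid\sigma(X(K))]=\sum (\text{coefficient})\cdot Y^{\beta}\cdot\E[W^{\delta}].
\]
For the Gaussian moments, use Isserlis' formula: $\E[W^\delta]$ vanishes for odd $|\delta|$. For even $|\delta|$ it is a sum over pairings of products of entries of $\Sigma$, which is a polynomial with integer coefficients in those entries. Each surviving term therefore has $Y$-degree $|\beta|\le d-|\delta|\le d$. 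This gives a polynomial of degree at most $d$ in $Y_1,\ldots,Y_n$, with coefficients built by field operations from the coefficients of $Q$ together with $G$, $G^{-1}$ and the cross-covariances. Since the right-hand side is $\sigma(X(K))$-measurable and integrable, and the defining property of conditional expectation is checked by independence, this identifies $q_K$.

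Finally, for an arbitrary linear coordinate system on $K$, meaning coordinates $Y'=AY$ with $A$ invertible, substitute $Y=A^{-1}Y'$. This is a linear change of variables, so it preserves the degree bound. If the new coordinates are defined by another basis of $K$, then $A$ is expressible through the Gram matrices, and the coefficient statement persists.

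The only genuinely delicate point is the independence step: one must justify that uncorrelated jointly Gaussian families in the isonormal process are independent. This follows because $(Y,W)$ is a Gaussian vector with block-diagonal covariance. One must also check integrability of the polynomial, which is immediate since all Gaussian moments are finite. Everything else is bookkeeping of degrees and coefficients.
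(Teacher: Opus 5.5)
Your proposal is correct and follows essentially the same route as the paper: split each smearing vector as $u=P_Ku+(I-P_K)u$, use the independence of the residual Gaussian part from $\sigma(X(K))$ to condition monomial by monomial, evaluate the residual moments by Isserlis' formula, and obtain the coordinates of $P_Ku$ by inverting the Gram matrix. Your restriction to $q$ built from the $\xi_j$ with rational coefficients is harmless, because the argument applies verbatim to an arbitrary cylindrical polynomial $Q(X(u_1),\ldots,X(u_s))$ with $u_j\in H$, which is the generality of the statement.
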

\begin{proof}
Write the observable in the form $q=Q(X(u_1),\ldots,X(u_s))$ for finitely many $u_1,\ldots,u_s\in H$ and a polynomial $Q$ of total degree at most $d$. For each $j$ write
\[
u_j=P_Ku_j+(I-P_K)u_j.
\]
Writing $Y_j=X(P_Ku_j)$ and $Z_j=X((I-P_K)u_j)$ gives
$X(u_j)=Y_j+Z_j$, where the $Y$-variables are
$\sigma(X(K))$-measurable and the $Z$-variables are independent
of that sigma-algebra by Gaussian orthogonality of $K$ and
$K^\perp$.

Expand $Q(Y_1+Z_1,\ldots,Y_s+Z_s)$ as a finite sum of monomials $Y^\alpha Z^\beta$.  Conditional expectation fixes the $Y$-factor and replaces the $Z$-factor by the centered Gaussian moment $\E[Z^\beta]$, giving
\[
\E[Y^\alpha Z^\beta\mid\sigma(X(K))]
=Y^\alpha\E[Z^\beta].
\]
Odd total residual moments vanish, while Wick's formula expresses every even residual moment as a finite sum of products of covariances.  Hence the result is a polynomial in the variables $Y_j$ and has degree at most the original degree $d$.

Choose a basis $e_1,\ldots,e_m$ of $K$ and write
\[
P_Ku_j=\sum_{a=1}^m c_{ja}e_a.
\]
Then $Y_j=\sum_a c_{ja}X(e_a)$, so the conditional expectation is a polynomial in the linear coordinates $X(e_a)$.  In a nonorthonormal basis the coefficient vector $c_j=(c_{j1},\ldots,c_{jm})$ is obtained by solving
\[
G_K c_j=b_j,
\qquad (G_K)_{ab}=\langle e_a,e_b\rangle_H,
\qquad (b_j)_a=\langle u_j,e_a\rangle_H.
\]
Thus only inversion of the Gram matrix and Gaussian covariance moments are used, both of which are field operations in the covariance/Gram data specified in the statement, proving the polynomial and coefficient assertions.
\end{proof}

\subsection{The spatially cut-off constructive \texorpdfstring{$\phi^4_2$}{phi4(2)} interaction}\label{subsec:phi4-input}
Fix $g\in C_c^\infty(\R^2)$, $g\ge0$, and $\lambda>0$.
The compact support of $g$ is the fixed spatial cutoff.  Choose a real, even mollifier
\[
h\in C_c^\infty(\R^2),\qquad h\ge0,\qquad \int_{\R^2}h=1,
\]
and put
\[
h_n(y)=n^2h(ny),\qquad
\delta_{n,x}(y)=h_n(x-y)=n^2h(n(x-y)).
\]
The ultraviolet parameter is $n$. The random variable
\[
\phi_n(x):=X(\delta_{n,x})
\]
is a smoothed field value.  Translation invariance of $C$ implies that
\[
c_n:=\E[\phi_n(x)^2]=\norm{h_n}_H^2
\]
is independent of $x$.  The Wick-ordered quartic is therefore
\[
:\phi_n(x)^4:
=\phi_n(x)^4-6c_n\phi_n(x)^2+3c_n^2.
\]
Define the ultraviolet-regularized interaction as the $L^2(\mu_C)$ Bochner integral
\begin{equation}\label{eq:Vn}
V_n=\lambda\int_{\R^2}g(x):X(\delta_{n,x})^4:\,dx.
\end{equation}
Each $V_n$ lies in the fourth homogeneous chaos by \eqref{eq:wick-Ij}. For the Banach-space-valued integral used here and in the Riemann sums below, see the treatment of Bochner integration in \cite[Sec.~2.3]{Ryan}.

For a second, purely finite-dimensional approximation, let
\[
\Lambda_m=\{x\in2^{-m}\mathbb Z^2:|x|\le m\},
\]
and define
\begin{align}
H_{n,m}&=\Span_{\R}\{\delta_{n,x}:x\in\Lambda_m\},\label{eq:Hnm}\\
f_{n,m}&=\lambda2^{-2m}\sum_{x\in\Lambda_m}g(x):X(\delta_{n,x})^4:.
\label{eq:fnm}
\end{align}
For fixed $n$, the parameter $m$ refines the spatial Riemann sum.  Because $g$ has compact support, the expanding ball in $\Lambda_m$ eventually contains the support relevant to the sum, so this refinement does not involve an infinite-volume limit.  The random variable $f_{n,m}$ is measurable with respect to $\sigma(X(H_{n,m}))$ and lies in the fourth homogeneous chaos.

\paragraph{External constructive input.}
The only non-elementary constructive-QFT statement assumed in this section is the existence of a real random variable $V=V_g$ such that
\begin{equation}\label{eq:constructive-input}
V_n\to V\quad\text{in }L^2(\mu_C),
\qquad
e^{-V}\in L^p(\mu_C)\quad\text{for every }1\le p<\infty.
\end{equation}
See \cite[Prop.~8.5.1 and Th.~8.6.2]{GJ} or \cite[Ths.~V.3 and~V.7]{Simon} for this spatially cut-off $\phi^4_2$ result, and \cite[Sec.~5.2]{Sampedro} for its use in the finite-dimensional approximation.

The Riemann sums are those of \cite[Sec.~5.2 and Th.~5.2]{Sampedro}, but their exponentials require different estimates in the two limits. At fixed ultraviolet index, a lower bound on the Wick quartic suffices, whereas for the ultraviolet limit the identity $V_n=\Gamma(T_n)V$ in \eqref{eq:smoothedinteraction} lets us apply Mehler's formula and Jensen's inequality to obtain $\E[e^{-sV_n}]\le\E[e^{-sV}]$ and hence uniform integrability.

\begin{proposition}\label{hyp:constructive}
The limit $V$ belongs to the fourth homogeneous Wiener chaos.  For every $1\le p<\infty$,
\begin{align}
f_{n,m}&\longrightarrow V_n,&e^{-f_{n,m}}&\longrightarrow e^{-V_n}
&&\text{in }L^p(\mu_C)\quad(m\to\infty),\label{eq:innerconv}\\
V_n&\longrightarrow V,&e^{-V_n}&\longrightarrow e^{-V}
&&\text{in }L^p(\mu_C)\quad(n\to\infty).
\label{eq:outerconv}
\end{align}
For each fixed $n$, the increasing sigma-algebras $\sigma(X(H_{n,m}))$ generate the completed field sigma-algebra.
\end{proposition}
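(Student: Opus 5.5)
We prove the four assertions in the order: chaos membership of $V$, the polynomial convergences, the exponential convergences, and finally the sigma-algebra statement.

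\emph{Chaos membership and polynomial convergence.} Each $V_n$ lies in $\mathcal H_4$, which is closed in $L^2(\mu_C)$. The constructive input \eqref{eq:constructive-input} gives $V_n\to V$ in $L^2$, so $V\in\mathcal H_4$. For the polynomial part of \eqref{eq:innerconv}, fix $n$ and note that $x\mapsto{:}X(\delta_{n,x})^4{:}=I_4(h_n(x-\cdot)^{\otimes4})$ is a continuous map $\R^2\to\mathcal H_4\subset L^2$. This holds because $x\mapsto\delta_{n,x}$ is continuous into $H$ (translation is continuous on $\mathcal S$) and $I_4$ is isometric up to $\sqrt{4!}$. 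Since $g\in C_c^\infty$ and $\Lambda_m$ eventually covers $\operatorname{supp}g$ with mesh $2^{-m}$, $f_{n,m}$ is a Riemann sum of a continuous compactly supported $L^2$-valued integrand. Hence $f_{n,m}\to V_n$ in $L^2$. All these objects lie in $\mathcal H_4$, so hypercontractivity \eqref{eq:hypercontractive} upgrades $L^2$ convergence to every $L^p$, $p<\infty$. The same argument yields $V_n\to V$ in $L^p$.

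\emph{Exponential convergences.} Convergence in probability plus uniform $L^{p'}$ bounds for some $p'>p$ gives $L^p$ convergence by Vitali. Convergence in probability of the exponentials follows from the polynomial convergence.

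For the outer limit, we establish $V_n=\Gamma(T_n)V$, where $T_n$ is convolution with $h_n$ on $H$. This operator is self-adjoint, since $h$ is even and convolution commutes with $C$, and it is a contraction, since $|\hat h_n|\le1$. To prove the identity, use \eqref{eq:secondquantization} with $T_n\delta_{k,x}=h_n*h_k(x-\cdot)$. This gives $\Gamma(T_n)V_k=\lambda\int g\,{:}X(h_n*h_k(x-\cdot))^4{:}\,dx$. Letting $k\to\infty$ (so $h_n*h_k\to h_n$ in $H$) and using $L^2$ continuity of $\Gamma(T_n)$ yields $\Gamma(T_n)V=V_n$. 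Jensen \eqref{eq:Gamma-Jensen} together with preservation of integrals then gives $\E[e^{-sV_n}]\le\E[\Gamma(T_n)e^{-sV}]=\E[e^{-sV}]<\infty$ for all $s\ge1$, uniformly in $n$. This is the needed uniform bound.

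For the inner limit at fixed $n$, we use the deterministic lower bound on the Wick quartic. Completing the square gives ${:}\phi^4{:}=(\phi^2-3c_n)^2-6c_n^2\ge-6c_n^2$. Since $g\ge0$, this yields $f_{n,m}\ge-6\lambda c_n^2\,2^{-2m}\sum_{\Lambda_m}g$. The right-hand side is bounded uniformly in $m$ because the Riemann sums of $g$ converge. Therefore $e^{-f_{n,m}}$ is uniformly bounded in $L^\infty$, and convergence in probability gives $L^p$ convergence. I expect the main obstacle to be the outer uniform integrability, specifically the identity $V_n=\Gamma(T_n)V$ with $T_n$ a genuine self-adjoint contraction. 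The inner bound is elementary.

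\emph{Sigma-algebras.} Fix $n$. The union $\bigcup_m H_{n,m}$ contains $\delta_{n,x}$ for all dyadic $x$, and by continuity of translation its closure contains all $\delta_{n,x}$, $x\in\R^2$. Suppose $v\in H$ is orthogonal to all of these. Then $(h_n*Cv)(x)=0$ for all $x$, so $\hat h_n\hat v/(|k|^2+\mathfrak m^2)=0$. Since $\hat h_n$ is entire and not identically zero, it is nonzero almost everywhere, hence $\hat v=0$. Thus the span is dense in $H$. The sigma-algebra generated by isonormal coordinates of a dense set is the full completed field sigma-algebra, as noted in Section~\ref{subsec:observables}.
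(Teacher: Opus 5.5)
Your proposal is correct and follows essentially the same route as the paper: closedness of $\mathcal H_4$, Bochner--Riemann sums upgraded by hypercontractivity, the Wick lower bound $(t^2-3c_n)^2-6c_n^2$ at fixed $n$, the identity $V_n=\Gamma(T_n)V$ combined with Mehler--Jensen and Vitali for the ultraviolet limit, and the Fourier/real-analytic density argument for the sigma-algebras. The differences are cosmetic: you use bounded convergence where the paper uses the Lipschitz bound $|e^{-f_{n,m}}-e^{-V_n}|\le e^{B_n}|f_{n,m}-V_n|$. You also leave implicit that $h_n*h_k(x-\cdot)\to h_n(x-\cdot)$ in $H$ uniformly in $x$, which follows from translation invariance and is what lets the limit pass through the Bochner integral.
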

\begin{proof}
\medskip\noindent\textit{Step 1. Spatial approximation.}
For a fixed ultraviolet index $n$, continuity of translation in $H$
shows that $x\mapsto\delta_{n,x}$ is continuous and has constant norm
$\sqrt{c_n}$. The Wick representation \eqref{eq:wick-Ij} gives
\[
:X(\delta_{n,x})^4:=I_4(\delta_{n,x}^{\otimes4}).
\]
The map $u\mapsto I_4(u^{\otimes4})$ is continuous from $H$ to $L^2(\mu_C)$ because
\[
\|I_4(u^{\otimes4})-I_4(v^{\otimes4})\|_2^2
=4!\,\|u^{\otimes4}-v^{\otimes4}\|_{H^{\otimes4}}^2.
\]
Multiplication by the compactly supported function $g$ therefore gives a compactly supported continuous map
\[
x\longmapsto g(x):X(\delta_{n,x})^4:
\quad\text{from }\R^2\text{ to }L^2(\mu_C).
\]
Its Bochner integral is $V_n$, and its dyadic Bochner Riemann sums are exactly $f_{n,m}$ for all sufficiently large $m$.  Hence
$f_{n,m}\to V_n$ in $L^2(\mu_C)$.
All differences belong to the fourth homogeneous chaos, so hypercontractivity \eqref{eq:hypercontractive} gives, for every finite $p\ge2$,
\[
\|f_{n,m}-V_n\|_p
\le (p-1)^2\|f_{n,m}-V_n\|_2\to0.
\]
For $1\le p<2$ use monotonicity of $L^p$ norms on the probability space.  This proves the first convergence in \eqref{eq:innerconv} for every finite $p$.

For every real $t$,
\[
t^4-6c_nt^2+3c_n^2=(t^2-3c_n)^2-6c_n^2\ge-6c_n^2.
\]
Since $g\ge0$,
\[
f_{n,m}\ge -6\lambda c_n^2\,2^{-2m}\sum_{x\in\Lambda_m}g(x),
\qquad
V_n\ge -6\lambda c_n^2\int_{\R^2}g(x)\,dx.
\]
Since the scalar Riemann sums are bounded uniformly in $m$, there is a finite $B_n$ with
\[
f_{n,m}\ge-B_n,\qquad V_n\ge-B_n
\]
for all $m$.  On $[-B_n,\infty)$ the derivative of $e^{-t}$ has absolute value at most $e^{B_n}$. Therefore
\[
|e^{-f_{n,m}}-e^{-V_n}|
\le e^{B_n}|f_{n,m}-V_n|.
\]
Taking $L^p$ norms proves $e^{-f_{n,m}}\to e^{-V_n}$ in every finite $L^p(\mu_C)$.

\medskip\noindent\textit{Step 2. Ultraviolet convergence.}
The convergence $V_n\to V$ in $L^2$ supplied by
\eqref{eq:constructive-input}, together with closedness of the fourth
homogeneous chaos, implies $V\in\mathcal H_4$. To relate the regularized
interactions to this limit, consider the convolution operator
$T_nu=h_n*u$, whose Fourier multiplier is $\widehat h(\zeta/n)$.
This multiplier is real because $h$ is real and even, while
$h\ge0$ and $\int h=1$ give
\[
|\widehat h(\zeta/n)|\le \int h=1.
\]
Moreover, $\widehat h(\zeta/n)\to1$ pointwise. Dominated convergence in the weighted Fourier realization of $H$ yields $T_nu\to u$ in $H$
for every $u\in H$.  Thus $T_n$ is a self-adjoint contraction with $T_n\to I_H$ strongly.

We now prove the identity
\begin{equation}\label{eq:smoothedinteraction}
V_n=\Gamma(T_n)V.
\end{equation}
Keeping $n$ fixed, we apply $\Gamma(T_n)$ to $V_\ell$ and use its
action $T_n^{\odot4}$ on the fourth chaos to obtain
\[
\Gamma(T_n)V_\ell
=\lambda\int g(x):X(T_n\delta_{\ell,x})^4:\,dx.
\]
Convolution is commutative, and evenness of the mollifiers gives
\[
T_n\delta_{\ell,x}=T_\ell\delta_{n,x}.
\]
For fixed $n$, $T_\ell\delta_{n,x}\to\delta_{n,x}$ in $H$ as $\ell\to\infty$, and translation invariance makes the norm of the difference independent of $x$, giving uniform convergence on the compact support of $g$.  Continuity of $u\mapsto I_4(u^{\otimes4})$ and the Bochner integral then give $\Gamma(T_n)V_\ell\to V_n$ in $L^2$.
On the other hand, $\Gamma(T_n)$ is an $L^2$ contraction, so
\[
\|\Gamma(T_n)V_\ell-\Gamma(T_n)V\|_2
\le\|V_\ell-V\|_2\to0.
\]
The $L^2$ limit is unique, proving \eqref{eq:smoothedinteraction}.

Since $V\in\mathcal H_4$ and $T_n\to I_H$ strongly, \eqref{eq:Gamma-strong} gives $V_n=\Gamma(T_n)V\to V$ in $L^2$. Again hypercontractivity on the fixed fourth chaos upgrades this to every finite $L^p$.

For $s\ge1$, the Mehler representation \eqref{eq:Mehler-realization} and Jensen's inequality give the pointwise conditional estimate
\[
e^{-s\Gamma(T_n)V}
\le \Gamma(T_n)(e^{-sV}).
\]
Taking expectations and using preservation of integrals by $\Gamma(T_n)$ yields
\[
\E[e^{-sV_n}]
\le\E[e^{-sV}]<\infty.
\]
The convergence $V_n\to V$ in $L^2$ implies
$e^{-V_n}\to e^{-V}$ in probability, while, for any finite $p\ge1$,
choosing $s>p$ in the preceding estimate gives
\[
\sup_n\E\bigl[|e^{-V_n}|^s\bigr]<\infty,
\]
so the resulting uniform integrability of $|e^{-V_n}|^p$ allows us to
apply Vitali's theorem \cite[Chap.~5]{Kallenberg}, yielding
$e^{-V_n}\to e^{-V}$ in $L^p$ and completing the proof of
\eqref{eq:outerconv}.

\medskip\noindent\textit{Step 3. Generation of the field sigma-algebra.}
To prove the sigma-algebra assertion, we first establish density of
the span of all translates of $h_n$ for a fixed $n$. If $u\in H$ is
orthogonal to every translate $h_n(x-\cdot)$, the Fourier realization
of this orthogonality says that the inverse Fourier transform of
\[
F(\zeta)=\frac{\widehat u(\zeta)\overline{\widehat h_n(\zeta)}}{|\zeta|^2+\mathfrak m^2}
\]
vanishes identically.  The product belongs to $L^1(d\zeta)$ by Cauchy--Schwarz in the weighted Fourier model of $H$, so Fourier uniqueness (see \cite[Chap.~8]{Folland}) gives $F=0$ almost everywhere. Since $h_n$ is nonzero and compactly supported, $\widehat h_n$ is real analytic and not identically zero. Its zero set has Lebesgue measure zero by the real-analytic zero-set theorem, for which a short proof is given in \cite{Mityagin}.  Hence $\widehat u=0$ almost everywhere and $u=0$.  Thus the translates span a dense subspace of $H$.

The dyadic points used in the sets $\Lambda_m$ are dense in $\R^2$.  Translation is continuous in $H$, so the span of the dyadic translates occurring in $\bigcup_mH_{n,m}$ is already dense in the span of all translates.  Consequently
\[
\overline{\bigcup_mH_{n,m}}^{\,H}=H.
\]
For an isonormal Gaussian process, the completed sigma-algebra generated by $X(K)$ for a dense linear subspace $K\subset H$ is the full Gaussian sigma-algebra.  Therefore the increasing sigma-algebras $\sigma(X(H_{n,m}))$ generate the completed field sigma-algebra, as claimed.
\end{proof}

Put $w=e^{-V}$ and $Z=\E[w]$. By Proposition~\ref{hyp:constructive}, $0<Z<\infty$, so the finite measure $w\mu_C$ becomes a probability measure after division by $Z$.

The following theorem expresses the constructive integral of a fixed polynomial observable as an iterated limit of finite-dimensional Gaussian integrals. The spatial discretization is removed first, at fixed ultraviolet regularization, and the ultraviolet limit is taken afterwards. The proof combines Sampedro's conditional-observable approximation \cite[Th.~5.3]{Sampedro} with Proposition~\ref{hyp:constructive}.

\begin{theorem}[Fixed-observable realization]\label{thm:fixed}
Let $q\in A_{\Q}$ and $q_{n,m}=\E[q\mid\sigma(X(H_{n,m}))]$.
Then
\begin{align}
q_{n,m}e^{-f_{n,m}}&\longrightarrow qe^{-V_n}
&&\text{in }L^1\quad(m\to\infty),\label{eq:innerq}\\
qe^{-V_n}&\longrightarrow qe^{-V}
&&\text{in }L^1\quad(n\to\infty).
\label{eq:outerq}
\end{align}
Consequently,
\begin{equation}\label{eq:doubleperiod}
\mcI_g(q)=\lim_{n\to\infty}\lim_{m\to\infty}
\int_{H_{n,m}^*}q_{n,m}e^{-f_{n,m}}\,d\mu_{n,m}.
\end{equation}
Here $H_{n,m}^*$ denotes the finite-dimensional coordinate dual of $H_{n,m}$ and $\mu_{n,m}$ is the Gaussian marginal associated with $X(H_{n,m})$. After choosing an $H$-orthonormal basis both are identified with $\R^{\dim H_{n,m}}$ equipped with its standard Gaussian measure.
\end{theorem}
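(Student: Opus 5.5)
The plan is to prove the two $L^1$ convergences separately with Hölder's inequality, and then read off \eqref{eq:doubleperiod} by integrating and changing to finite coordinates.

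\emph{Inner limit \eqref{eq:innerq}.} Fix $n$ and split
\[
q_{n,m}e^{-f_{n,m}}-qe^{-V_n}=(q_{n,m}-q)e^{-f_{n,m}}+q\bigl(e^{-f_{n,m}}-e^{-V_n}\bigr).
\]
The second term tends to zero in $L^1$ by Cauchy--Schwarz. Indeed, $q\in L^2$, since cylindrical polynomials have all moments, and $e^{-f_{n,m}}\to e^{-V_n}$ in $L^2$ by \eqref{eq:innerconv}.

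For the first term, the bound $f_{n,m}\ge-B_n$ from the proof of Proposition~\ref{hyp:constructive} gives $\|e^{-f_{n,m}}\|_\infty\le e^{B_n}$. It therefore suffices to show $q_{n,m}\to q$ in $L^1$, and we will in fact prove it in $L^2$. By \eqref{eq:Gamma-conditioning}, $q_{n,m}=\E[q\mid\sigma(X(H_{n,m}))]$. The sigma-algebras are increasing in $m$ (since $\Lambda_m\subset\Lambda_{m+1}$) and generate the completed field sigma-algebra by the last assertion of Proposition~\ref{hyp:constructive}. The $L^2$ martingale convergence theorem then gives $q_{n,m}\to q$ in $L^2$.

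Alternatively, one can avoid martingales. The projections $P_{H_{n,m}}$ converge strongly to $I_H$, because the subspaces increase to a dense union. Since $q$ lies in finitely many chaoses, \eqref{eq:Gamma-strong} applies directly.

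\emph{Outer limit \eqref{eq:outerq}.} Again by Cauchy--Schwarz,
\[
\|qe^{-V_n}-qe^{-V}\|_1\le\|q\|_2\,\|e^{-V_n}-e^{-V}\|_2\to0,
\]
using \eqref{eq:outerconv} with $p=2$.

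\emph{Formula \eqref{eq:doubleperiod}.} $L^1$ convergence implies convergence of the integrals against $\mu_C$, so
\[
\mcI_g(q)=\lim_n\lim_m\int q_{n,m}e^{-f_{n,m}}\,d\mu_C.
\]
The integrand is $\sigma(X(H_{n,m}))$-measurable: $f_{n,m}$ is by construction, and $q_{n,m}$ is a polynomial in the coordinates of $H_{n,m}$ by Lemma~\ref{lem:conditioning}. The integral therefore equals the integral of the corresponding function of $X(e_1),\dots,X(e_d)$ against the marginal law. For an $H$-orthonormal basis this marginal is the standard Gaussian $\gamma_d$ on $\R^{d}$, $d=\dim H_{n,m}$, by the discussion in Section~\ref{subsec:gaussian-toolbox}.

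\emph{Expected obstacle.} The only delicate point is the convergence $q_{n,m}\to q$. Two things must be checked: that the $H_{n,m}$ are genuinely nested in $m$, and that their union is dense in $H$. Both are furnished by Step~3 of Proposition~\ref{hyp:constructive}. The uniform lower bound on $f_{n,m}$ at fixed $n$ is what allows $L^1$ convergence of $q_{n,m}$ alone, with no uniform integrability argument needed.
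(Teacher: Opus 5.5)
Your proposal is correct and follows the paper's proof essentially step by step: the same splitting of $q_{n,m}e^{-f_{n,m}}-qe^{-V_n}$, martingale convergence giving $q_{n,m}\to q$ in $L^2$, Cauchy--Schwarz for the outer limit, and the pushforward identity for \eqref{eq:doubleperiod}. The only difference is minor. You control $(q_{n,m}-q)e^{-f_{n,m}}$ with the fixed-$n$ bound $e^{-f_{n,m}}\le e^{B_n}$, while the paper uses boundedness of $\|e^{-f_{n,m}}\|_2$, which follows from \eqref{eq:innerconv}; either suffices.
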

\begin{proof}
For $q\in A_{\Q}$ and a fixed $n$, Proposition~\ref{hyp:constructive}
shows that the increasing sigma-algebras
$\mathcal F_{n,m}:=\sigma(X(H_{n,m}))$ generate the completed field
sigma-algebra. Since the Gaussian polynomial $q$ belongs to
$L^2(\mu_C)$, the convergence theorem for conditional expectations
\cite[Chap.~9]{Kallenberg} gives
$q_{n,m}=\E[q\mid\mathcal F_{n,m}]\to q$ in $L^2(\mu_C)$.

Write
\[
q_{n,m}e^{-f_{n,m}}-qe^{-V_n}
=(q_{n,m}-q)e^{-f_{n,m}}+q(e^{-f_{n,m}}-e^{-V_n}).
\]
Taking $L^1$ norms and applying Cauchy--Schwarz gives
\[
\begin{aligned}
\|q_{n,m}e^{-f_{n,m}}-qe^{-V_n}\|_1
\le \|q_{n,m}-q\|_2\,\|e^{-f_{n,m}}\|_2+\|q\|_2\,\|e^{-f_{n,m}}-e^{-V_n}\|_2.
\end{aligned}
\]
The first product tends to zero because $q_{n,m}\to q$ in $L^2$
and $\|e^{-f_{n,m}}\|_2$ is bounded by
Proposition~\ref{hyp:constructive}, while the second tends to zero by
the same proposition, proving \eqref{eq:innerq}.

The observable $q$ no longer depends on $n$, so
\[
\|qe^{-V_n}-qe^{-V}\|_1
\le\|q\|_2\,\|e^{-V_n}-e^{-V}\|_2\to0
\]
by \eqref{eq:outerconv}.  This is \eqref{eq:outerq}.

The random variables $q_{n,m}$ and $f_{n,m}$ are $\mathcal F_{n,m}$-measurable.  If $\mu_{n,m}$ denotes the Gaussian marginal of $X(H_{n,m})$, then by the defining property of a pushforward measure,
\[
\E_{\mu_C}[q_{n,m}e^{-f_{n,m}}]
=\int_{H_{n,m}^*}q_{n,m}e^{-f_{n,m}}\,d\mu_{n,m}.
\]
Taking first $m\to\infty$ using \eqref{eq:innerq}, and then $n\to\infty$ using \eqref{eq:outerq}, gives
\[
\lim_{n\to\infty}\lim_{m\to\infty}
\int q_{n,m}e^{-f_{n,m}}\,d\mu_{n,m}
=\E[qe^{-V}]=\mcI_g(q).
\]
This is \eqref{eq:doubleperiod}.
\end{proof}

\subsection{A common increasing finite-dimensional realization}\label{subsec:diagonal}
We choose one diagonal in $(n,m)$ for all fixed observables and enlarge its coordinate spaces, without changing the interaction, until each observable is represented without conditioning. If $H_{n,m}\subset K\subset H$, a direction in $K\ominus H_{n,m}$ is a \emph{Gaussian spectator} when the interaction is independent of that coordinate and the free measure factors over the orthogonal splitting.

\begin{proposition}[Admissible diagonal]\label{prop:diagonal}
There exist integers $m(r)\to\infty$ and increasing finite-dimensional subspaces $K_r\subset H$ with dense union such that
\[
H_{r,m(r)}\subset K_r,
\qquad
\xi_1,\ldots,\xi_r\in K_r.
\]
Thus the diagonal uses ultraviolet index $n=r$ and spatial-discretization index $m=m(r)$.  Put
\[
f_r=f_{r,m(r)}
\]
and regard $f_r$ as a polynomial/random variable on $K_r$ by making it independent of the spectator directions.  With $w_r=e^{-f_r}$,
\begin{equation}\label{eq:diagonalconv}
f_r\to V,
\qquad
w_r\to w
\qquad\text{in every finite }L^p(\mu_C).
\end{equation}
Every fixed $q\in A_{\Q}$ is $\sigma(X(K_r))$-measurable for all sufficiently large $r$, hence is represented by the same random variable at those levels, and $qw_r\to qw$ in $L^1$.
\end{proposition}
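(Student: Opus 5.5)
The construction is a diagonal extraction from the iterated limits of Proposition~\ref{hyp:constructive}. First we choose $m(r)$, then we enlarge the spaces to $K_r$. For each $r$, the convergence \eqref{eq:innerconv} at fixed ultraviolet index $n=r$ lets us pick $m(r)$ so that
\[
\|f_{r,m(r)}-V_r\|_{p}<2^{-r},\qquad \|e^{-f_{r,m(r)}}-e^{-V_r}\|_{p}<2^{-r}\qquad\text{for } p=1,\dots,r.
\]
Replacing $m(r)$ by $\max(m(r),m(r-1)+1,r)$ keeps these bounds, since for fixed $n$ the tail beyond any threshold works, and it forces $m(r)\to\infty$ strictly.

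We then set
\[
K_r=H_{1,m(1)}+\dots+H_{r,m(r)}+\Span\{\xi_1,\dots,\xi_r\}.
\]
This is finite-dimensional and increasing in $r$, and it contains $H_{r,m(r)}$ and $\xi_1,\dots,\xi_r$. Its union contains every $\xi_j$, whose span is dense in $H$ by the choice in Section~\ref{subsec:observables}, so the union is dense. Splitting $K_r=H_{r,m(r)}\oplus(K_r\ominus H_{r,m(r)})$ orthogonally in $H$ makes the complementary coordinates independent standard Gaussians. Since $f_r$ is $\sigma(X(H_{r,m(r)}))$-measurable, these directions are Gaussian spectators, and regarding $f_r$ as a polynomial on $K_r$ changes nothing as a random variable.

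For \eqref{eq:diagonalconv}, fix $p<\infty$ and take $r\ge p$. The triangle inequality gives
\[
\|f_r-V\|_p\le 2^{-r}+\|V_r-V\|_p,
\]
and the same bound holds for $w_r-w$ with $e^{-V_r}$ in place of $V_r$. Both right-hand sides tend to zero by \eqref{eq:outerconv}. Monotonicity of $L^p$ norms covers non-integer $p$.

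For the observable statement, any $q\in A_\Q$ is a polynomial in $X(\xi_1),\dots,X(\xi_J)$ for some $J$. For $r\ge J$ all these variables are $\sigma(X(K_r))$-measurable, so $q$ itself is the level-$r$ representative and no conditioning is needed. Finally, by Cauchy--Schwarz,
\[
\|qw_r-qw\|_1\le\|q\|_2\,\|w_r-w\|_2\to0.
\]

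The only delicate point is making a single choice of $m(r)$ serve all exponents $p$ at once. This is handled by requiring the bounds only for $p\le r$, as above, rather than fixing one $p$. Everything else is routine bookkeeping.
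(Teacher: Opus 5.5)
Your proposal is correct and follows the paper's proof essentially step for step. You choose a single $m(r)$ satisfying the finitely many bounds for exponents $1,\dots,r$, build cumulative spaces $K_r$ containing $H_{r,m(r)}$ and $\xi_1,\dots,\xi_r$, and conclude with the triangle inequality and \eqref{eq:outerconv}, monotonicity of $L^p$ norms, and Cauchy--Schwarz; the only differences are cosmetic (tolerance $2^{-r}$ instead of $r^{-1}$, and an unneeded normalization making $m(r)$ strictly increasing).
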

\begin{proof}
For each integer $r\ge1$, use \eqref{eq:innerconv} with $n=r$.  Since convergence holds in every finite $L^j$, choose $m(r)\ge r$ so large that for every integer $1\le j\le r$,
\begin{equation}\label{eq:diagonal-accuracy}
\|f_{r,m(r)}-V_r\|_j
+\|e^{-f_{r,m(r)}}-e^{-V_r}\|_j<r^{-1}.
\end{equation}
This is a finite list of inequalities at stage $r$, so a single $m(r)$ can satisfy all of them.

Set $K_0=\{0\}$ and define
\begin{equation}\label{eq:diagonal-spaces}
K_r=K_{r-1}+H_{r,m(r)}
    +\operatorname{Span}_{\R}\{\xi_1,\ldots,\xi_r\}.
\end{equation}
The spaces $K_r$ are finite-dimensional and increasing, and their union contains every $\xi_j$, hence
\[
\overline{\bigcup_rK_r}^{\,H}=H
\]
because the linear span of the $\xi_j$ was chosen dense.

For $1\le p<\infty$, choose an integer $j\ge p$. Writing
$f_r=f_{r,m(r)}$, the estimate \eqref{eq:diagonal-accuracy}
and monotonicity of $L^p$ norms give, for every $r\ge j$,
\[
\|f_r-V_r\|_p\le\|f_r-V_r\|_j<r^{-1}.
\]
Hence
\[
\|f_r-V\|_p
\le\|f_r-V_r\|_p+\|V_r-V\|_p\to0
\]
by \eqref{eq:outerconv}.  The same argument gives
\[
\|w_r-w\|_p
\le\|e^{-f_r}-e^{-V_r}\|_p+\|e^{-V_r}-e^{-V}\|_p\to0.
\]
This proves \eqref{eq:diagonalconv}.

If $q\in A_{\Q}$ depends only on $X(\xi_1),\ldots,X(\xi_N)$, then $\xi_1,\ldots,\xi_N\in K_r$ for every $r\ge N$.  Therefore
\[
q=\E[q\mid\sigma(X(K_r))]
\]
as an actual random variable for all $r\ge N$.
The remaining convergence follows from $q\in L^2$ and
$w_r\to w$ in $L^2$ through
\[
\|qw_r-qw\|_1\le\|q\|_2\|w_r-w\|_2\to0.
\]
This concludes the proof.
\end{proof}
A choice satisfying Proposition~\ref{prop:diagonal} will be called an \emph{admissible diagonal}. 

\subsection{Finite Gaussian levels, orthonormal coordinates, and the coefficient field}\label{subsec:field}
We fix a countable family of levels, including all subsequent diagonals and spectator enlargements, and a common field for the coefficients of their actions, conditioned observables, and coordinate changes.

\paragraph{The family of finite levels.}
Let
\begin{equation}\label{eq:finite-directions}
\mathcal D_0=
\{\xi_j:j\ge1\}\ \cup\
\{\delta_{n,x}:n,m\ge1,\ x\in\Lambda_m\},
\end{equation}
and fix an enumeration of this set. Since each $\Lambda_m$ is finite, $\mathcal D_0$ is countable. Define
\begin{equation}\label{eq:finite-space-family}
\mathcal K=
\{\operatorname{Span}_{\R}F:F\subset\mathcal D_0\text{ is finite}\}.
\end{equation}
The family $\mathcal K$ is countable, contains every $H_{n,m}$, and is closed under finite sums. In particular, every $K_r$ constructed by \eqref{eq:diagonal-spaces} belongs to $\mathcal K$, for any choice of the integers $m(r)$.

Our countable indexing set is
\begin{equation}\label{eq:finite-level-family}
\mathcal T=
\{(n,m,K):n,m\ge1,\ K\in\mathcal K,\ H_{n,m}\subset K\}.
\end{equation}
From now on, admissible diagonals and spectator enlargements are taken within $\mathcal T$, as the construction \eqref{eq:diagonal-spaces} always permits.
For $a=(n,m,K)$ put $K_a=K$, $d_a=\dim K$, and $f_a=f_{n,m}$, regarded as a random variable on $X(K)$.  Thus the interaction is unchanged when the space is enlarged.  The original level $(n,m)$ means $(n,m,H_{n,m})$, and a diagonal level $r$ means $(r,m(r),K_r)$.  If two levels have spaces $K$ and $L$, both interactions can be represented on $K+L$ by two indices in $\mathcal T$.  Adjoining finitely many observable directions $\xi_j$ likewise gives another level in $\mathcal T$.

\paragraph{A common coefficient field.}
Set
\[
k_0=
\Q\Bigl(
\mathfrak m,\lambda,
\{g(x):x\in\textstyle\bigcup_m\Lambda_m\},
\{\pair{u}{v}_H:u,v\in\mathcal D_0\}
\Bigr)\subset\R.
\]
To include the normalizing coefficients in Gram--Schmidt, define recursively
\begin{equation}\label{eq:coefficient-field}
\begin{split}
k^{(0)}&=k_0,\\
k^{(\nu+1)}&=k^{(\nu)}\bigl(\sqrt{b}:b\in k^{(\nu)},\ b>0\bigr),
\qquad
k=\bigcup_{\nu\ge0}k^{(\nu)}\subset\R,
\end{split}
\end{equation}
where every square root is the positive real one.  Put $H_k=\operatorname{Span}_k\mathcal D_0\subset H$.  For each $K\in\mathcal K$, select a basis from $\mathcal D_0\cap K$ by retaining, in the fixed enumeration, each vector not in the span of those already retained, until $\dim K$ vectors have been selected.  Apply Gram--Schmidt to this ordered basis, and denote the resulting orthonormal basis by $\mathbf e_K$.  At level $a$ write
\[
\mathbf e_a=\mathbf e_{K_a}=(e_{a,1},\ldots,e_{a,d_a}),
\qquad z_i=X(e_{a,i}).
\]
The $z_i$ are independent standard Gaussian variables.  Write $f_a(z)$ for the interaction in these coordinates and, for $q\in A_{\Q}$, let
\[
q_a=\E[q\mid\sigma(X(K_a))],
\]
also expressed in these coordinates. By Lemma~\ref{lem:conditioning}, $q_a$ is a polynomial, and the next lemma places its coefficients and those of $f_a$ in the field just defined.

\begin{lemma}\label{lem:commonfield}
The field $k$ is countable, contains the positive square root of each of its positive elements, and has the following properties.
\begin{enumerate}
\item For every $K\in\mathcal K$, all vectors of $\mathbf e_K$ lie in $H_k$.  Every $v\in H_k\cap K$ has coordinates in $k$ in that basis, and the orthogonal projection $P_K$ maps $H_k$ into $H_k\cap K$.
\item If $K\subset L$ with $K,L\in\mathcal K$, there is an orthonormal basis of $L$ extending $\mathbf e_K$ whose vectors belong to $H_k$.  Its change-of-basis matrix with $\mathbf e_L$ lies in $O_{\dim L}(k)$.  Thus the splitting $L=K\oplus(L\ominus K)$ can be written over $k$.
\item For every $a\in\mathcal T$ and every $q\in A_{\Q}$,
\[
q_a(z),\ f_a(z)\in k[z_1,\ldots,z_{d_a}].
\]
\end{enumerate}
\end{lemma}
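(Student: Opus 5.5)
The plan is to work entirely inside the tower \eqref{eq:coefficient-field}. Each $k^{(\nu)}$ is countable: $k_0$ is generated over $\Q$ by countably many elements, because $\bigcup_m\Lambda_m$ and $\mathcal D_0$ are countable, and adjoining countably many square roots to a countable field keeps it countable. Hence $k$ is a countable union of countable fields. If $b\in k$ and $b>0$, then $b\in k^{(\nu)}$ for some $\nu$, so $\sqrt b\in k^{(\nu+1)}\subset k$. Inner products on $H_k$ are $k$-bilinear combinations of the numbers $\pair uv_H$ with $u,v\in\mathcal D_0$, so they lie in $k$. This single fact drives everything that follows.

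For (1), I would run Gram--Schmidt on the selected ordered basis $v_1,\dots,v_d$ of $K$ from $\mathcal D_0$. By induction, each unnormalized vector $w_i=v_i-\sum_{j<i}\pair{v_i}{e_j}_He_j$ lies in $H_k$, since its coefficients are inner products of elements of $H_k$. Then $\norm{w_i}_H^2\in k$ is positive, so $e_i=w_i/\norm{w_i}_H$ lies in $H_k$ by square-root closure. For $v\in H_k\cap K$ the coordinates are $\pair v{e_i}_H\in k$. For $u\in H_k$ the projection is $P_Ku=\sum_i\pair u{e_i}_He_i$, which lies in $H_k\cap K$.

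For (2), I would take $\mathbf e_K$ and extend it by running Gram--Schmidt on the vectors of $\mathbf e_L$, discarding those whose residual vanishes. The same argument as in (1) places every new vector in $H_k$. The change-of-basis matrix has entries $\pair{e}{e'}_H\in k$ and is orthogonal, so it lies in $O_{\dim L}(k)$.

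For (3), start with $f_a$. In the coordinates $z$, each $X(\delta_{n,x})$ equals $\sum_i\pair{\delta_{n,x}}{e_{a,i}}_H z_i$, a linear form with coefficients in $k$ by (1). The constant $c_n=\pair{\delta_{n,x}}{\delta_{n,x}}_H$, the values $g(x)$, $\lambda$, and the factor $2^{-2m}$ all lie in $k$. Hence $f_a\in k[z]$.

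For $q_a$, write $q=Q(X(\xi_1),\dots,X(\xi_N))$ with $Q$ having rational coefficients, and follow the proof of Lemma~\ref{lem:conditioning}. The decomposition gives $Y_j=X(P_K\xi_j)$, which is a $k$-linear form in $z$ by (1). The residual covariances are
\[
\pair{(I-P_K)\xi_i}{(I-P_K)\xi_j}_H=\pair{\xi_i}{\xi_j}_H-\pair{P_K\xi_i}{P_K\xi_j}_H\in k.
\]
Wick's formula then gives each residual moment $\E[Z^\beta]$ as a polynomial in these covariances, so it lies in $k$. Therefore $q_a\in k[z]$.

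The main obstacle is bookkeeping rather than analysis: one must check that every normalizing constant is the square root of a positive element already in $k$. This is exactly what the recursive tower and the positivity of $\norm{w_i}_H^2$ ensure.
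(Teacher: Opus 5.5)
Your proposal is correct and follows the paper's proof essentially step for step. It uses the same ingredients: countability and square-root closure from the tower, $k$-valued inner products on $H_k$ driving Gram--Schmidt, conditioning with residual covariances in $k$, and the explicit Wick expansion for $f_a$. The only cosmetic differences are that you extend $\mathbf e_K$ using the vectors of $\mathbf e_L$ rather than fresh vectors from $\mathcal D_0$, and that you write the residual covariance as $\pair{\xi_i}{\xi_j}_H-\pair{P_K\xi_i}{P_K\xi_j}_H$ instead of the Gram-matrix formula; both are equally valid.
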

\begin{proof}
The field $k_0$ is countably generated over $\Q$, hence countable. Adjoining positive square roots preserves countability, so each $k^{(\nu)}$ and their union $k$ are countable. If $b>0$ belongs to $k$, then $b\in k^{(\nu)}$ for some $\nu$, and $\sqrt b\in k^{(\nu+1)}\subset k$.

Bilinearity and the definition of $k_0$ give
$\pair{u}{v}_H\in k$ for $u,v\in H_k$, so subtraction of
orthogonal projections in Gram--Schmidt preserves $H_k$.
Each nonzero residual vector $v$ has $\|v\|_H^2\in k_{>0}$,
and closure under positive square roots ensures that its normalization
also lies in $H_k$. Consequently $\mathbf e_K\subset H_k$,
the coordinates $\pair{v}{e_{K,i}}_H$ of every
$v\in H_k\cap K$ belong to $k$, and
\[
P_Kv=\sum_i\pair{v}{e_{K,i}}_H e_{K,i}\in H_k\cap K
\qquad(v\in H_k).
\]
For an inclusion $K\subset L$, start with $\mathbf e_K$, append a finite generating family for $L$ from $\mathcal D_0$, and perform Gram--Schmidt, discarding zero residuals.  This gives the required adapted basis in $H_k$.  Entries of the transition matrix are inner products of two vectors in $H_k$, hence lie in $k$. The matrix is orthogonal because both bases are orthonormal.

For the projection and conditional covariance in an arbitrary basis $u_1,\ldots,u_d\in H_k$ of $K$, set
\[
G=(\pair{u_i}{u_j}_H)_{i,j},
\qquad b(v)=(\pair{u_i}{v}_H)_{i=1}^d.
\]
The Gram matrix $G$ is invertible, and for $v,w\in H_k$,
\begin{equation}\label{eq:field-projection-covariance}
\begin{split}
P_Kv&=\sum_{i=1}^d\bigl(G^{-1}b(v)\bigr)_i u_i,\\
\pair{v-P_Kv}{w-P_Kw}_H
&=\pair{v}{w}_H-b(v)^{\mathsf T}G^{-1}b(w)\in k.
\end{split}
\end{equation}
Indeed, the first formula is the unique solution of the equations $\pair{v-P_Kv}{u_j}_H=0$, and the second follows by expanding the inner product and substituting the first.  All entries of $G$ and $b(v)$ lie in $k$, as do those of $G^{-1}$.  For $K=\{0\}$, the projection is zero and the residual covariance is simply $\pair{v}{w}_H$.

Fix $a\in\mathcal T$, put $K=K_a$, and write $q=Q(X(\xi_{j_1}),\ldots,X(\xi_{j_s}))$ with $Q\in\Q[t_1,\ldots,t_s]$.  In the Gaussian decomposition
\[
X(\xi_{j_\ell})=X(P_K\xi_{j_\ell})+\eta_\ell,
\qquad
\eta_\ell=X(\xi_{j_\ell}-P_K\xi_{j_\ell}),
\]
the vector $(\eta_1,\ldots,\eta_s)$ is independent of $X(K)$.  Each $X(P_K\xi_{j_\ell})$ is a linear polynomial with coefficients in $k$, and the covariance entries $\E[\eta_\ell\eta_t]$ belong to $k$ by \eqref{eq:field-projection-covariance}.  Expanding $Q$ and integrating out the $\eta_\ell$ therefore gives a polynomial over $k$. As in Lemma~\ref{lem:conditioning}, each Gaussian moment is a sum of products of these covariance entries.  This proves the assertion about $q_a$.

Finally, for $a=(n,m,K)$ and $x\in\Lambda_m$, put
\[
\ell_{a,x}(z)=\sum_{i=1}^{d_a}\pair{\delta_{n,x}}{e_{a,i}}_H z_i.
\]
Since $\delta_{n,x}\in K$, this polynomial represents $X(\delta_{n,x})$ exactly.  Its coefficients and $c_n=\|\delta_{n,x}\|_H^2$ lie in $k$.  The Wick expansion gives the explicit formula
\begin{equation}\label{eq:finite-interaction-coordinates}
f_a(z)=\lambda2^{-2m}\sum_{x\in\Lambda_m}g(x)
\bigl(\ell_{a,x}(z)^4-6c_n\ell_{a,x}(z)^2+3c_n^2\bigr).
\end{equation}
Every coefficient on the right belongs to $k$, proving the assertion about $f_a$.
\end{proof}

For example, if $q=X(\xi_j)^2$ and $c_i=\pair{\xi_j}{e_{a,i}}_H$, the conditional polynomial is
\[
q_a(z)=\left(\sum_{i=1}^{d_a}c_i z_i\right)^2
       +\|\xi_j\|_H^2-\sum_{i=1}^{d_a}c_i^2.
\]
The last two terms give the variance of the component orthogonal to $K_a$, which vanishes when $\xi_j\in K_a$. More generally, $q_a=q$ as a random variable whenever $q$ is already $\sigma(X(K_a))$-measurable.

The field $k$ depends on the chosen data because the values $g(x)$ and the covariance entries may be transcendental, and we do not claim that changing $k$ leaves the Tannakian groups unchanged. The coordinate assertions of the lemma apply to $H_k$. Below, the Betti realization is over $\Q$ and the de Rham realization is over this fixed $k$.

\paragraph{The finite Euclidean action.}
For $a\in\mathcal T$ define
\begin{equation}\label{eq:finiteaction}
S_a(z)=\mathfrak q_{d_a}(z)+f_a(z),
\qquad
A_a=k[z_1,\ldots,z_{d_a}].
\end{equation}
Here $\mathfrak q_d(z)=\frac12\sum_{i=1}^d z_i^2$ is a regular polynomial on $\A_k^d$, without complex conjugation, whose restriction to real points is $|z|^2/2$. We use \emph{action} in the Euclidean field-theory sense, as in the density
\begin{equation}\label{eq:action-density}
e^{-f_a(z)}\,d\gamma_{d_a}(z)
=(2\pi)^{-d_a/2}e^{-S_a(z)}\,dz.
\end{equation}
For the physical quartic approximations one has
\[
S_a(z)\ge\tfrac12|z|^2-B_a
\]
for some constant $B_a$.  Indeed, each Wick quartic satisfies
\[
t^4-6ct^2+3c^2=(t^2-3c)^2-6c^2\ge-6c^2,
\]
and $f_{n,m}$ is a finite sum of such terms with nonnegative coefficients.  Passing to spectator enlargements leaves the interaction unchanged and merely adds free quadratic coordinates, so the same form of coercive bound persists.

\section{Exponential motives and Galois profiles}\label{sec:motives}
We now associate exponential-motivic period symbols with the finite-dimensional Gaussian integrals constructed in Section~\ref{sec:gaussian}. Polynomial insertions define classes in twisted de Rham cohomology, and the real integration cycle supplies the corresponding Betti covector. Dividing by the free Gaussian period recovers the normalization of the original integrals.

We place these objects in a common Tannakian category and study the Galois action on their period symbols. We also examine which de Rham classes give zero symbols for the chosen integration cycle.

The construction uses the theory of exponential motives and the comparison with rapid-decay cohomology developed in \cite{FJ,HR,Hien}. Numbered references to Fres\'an--Jossen \cite{FJ} refer to the version of 11~April 2024.

\subsection{Exponential cohomology and comparison}\label{subsec:exponentialcohomology}
Let $X$ be a smooth variety over the field $k\subset\C$ fixed in Section~\ref{subsec:field}, and let
\[
S:X\longrightarrow\A^1_k
\]
be a regular function.  We call $(X,S)$ a \emph{variety with potential}.  The exponential integrals relevant for this paper have the form
\[
\int_\gamma e^{-S}\omega,
\]
where $\omega$ is an algebraic differential form and the (possibly noncompact) chain $\gamma$ approaches infinity only through regions in which $\Re S\to+\infty$.

\paragraph{Twisted de Rham realization.}
The potential $S$ defines the rank-one algebraic connection
\[
\nabla_S=d-dS\wedge
\]
on $\mathcal O_X$.  Since $d(dS)=0$, one has $\nabla_S^2=0$, and hence the twisted de Rham complex
\begin{equation}\label{eq:twistedDRcomplex}
(\Omega_X^\bullet,\nabla_S)
=
\bigl(\mathcal O_X\xrightarrow{\,d-dS\wedge\,}\Omega_X^1
\longrightarrow\cdots\longrightarrow\Omega_X^{\dim X}\bigr).
\end{equation}
We write
\[
H^j_{\dR}(X,S)=\mathbb H^j\bigl(X,(\Omega_X^\bullet,\nabla_S)\bigr).
\]
For affine $X$, the terms of \eqref{eq:twistedDRcomplex} are quasi-coherent, so this hypercohomology is computed by global algebraic forms. This is exactly the sign convention of \cite[Sec.~7.1.1 and Def.~7.1.2]{FJ}, with their potential $f$ equal to our $S$. Multiplication by $e^{-S}$ intertwines the twisted differential with the ordinary differential through
\[
d(e^{-S}\omega)=e^{-S}(d\omega-dS\wedge\omega).
\]
The analytic horizontal sections of $d-dS$ are locally generated by $e^S$, since $(d-dS)e^S=0$, whereas the integration factor $e^{-S}$ is horizontal for the dual connection. This distinction fixes the signs of the twisted derivatives below.

\paragraph{Rapid-decay Betti realization.}
Put $X^{\mathrm{an}}=X(\C)$ and, for $R\in\R$, let
\[
X_R^+(S)=\{x\in X^{\mathrm{an}}:\Re S(x)\ge R\}.
\]
In the elementary description sufficient for the present affine examples, rapid-decay homology is the projective limit of relative homology groups
\begin{equation}\label{eq:rdhomology}
H_j^{\mathrm{rd}}(X,S;\Q)
=\varprojlim_{R\to+\infty}H_j\bigl(X^{\mathrm{an}},X_R^+(S);\Q\bigr).
\end{equation}
For $R'\ge R$ one has $X_{R'}^+(S)\subseteq X_R^+(S)$, and the inclusion of pairs induces the transition map from the relative homology group at $R'$ to the one at $R$. Formula~\eqref{eq:rdhomology} is \cite[Def.~3.1.1]{FJ} with the closed subvariety $Y$ empty. By \cite[Sec.~3.1.2]{FJ}, the transition maps are isomorphisms for all sufficiently large $R$, and the resulting spaces are finite-dimensional. Thus a class is represented by compatible relative classes whose boundaries are pushed into regions where $e^{-S}$ is exponentially small. Dually, relative cohomology forms an eventually constant direct system whose limit, the cohomological Betti realization, is canonically
\[
H^j_{\mathrm{rd}}(X,S;\Q)=\operatorname{Hom}_\Q(H_j^{\mathrm{rd}}(X,S;\Q),\Q).
\]
A rapid-decay homology class is therefore a covector on the Betti realization, with the real integration cycle supplying the physical covector used in the framing.

Integration gives the period pairing
\begin{equation}\label{eq:expperiodpairing}
H_j^{\mathrm{rd}}(X,S;\Q)\times H^j_{\dR}(X,S)
\longrightarrow\C,
\qquad
([\gamma],[\omega])\longmapsto\int_\gamma e^{-S}\omega.
\end{equation}
The comparison theorem identifies the complexification of twisted de Rham cohomology with rapid-decay cohomology. For smooth affine varieties, the perfect integration pairing is due to Hien--Roucairol \cite[Th.~2.7]{HR}, whose convention $d+dg\wedge$ corresponds here to $g=-S$. Fres\'an--Jossen give the functorial comparison for varieties with a potential, including relative cohomology and compatibility with cup products, in \cite[Th.~7.6.1]{FJ}. See also \cite{Hien} for the general theory of periods of flat algebraic connections.

\paragraph{Motivic lift.}
Fres\'an--Jossen \cite{FJ} lift these realizations to a $\Q$-linear neutral Tannakian category $\mathcal M_{\exp}(k)$ of exponential motives, with fiber functors
\[
\omega_\B:\mathcal M_{\exp}(k)\to\operatorname{Vect}_{\Q},\qquad
\omega_{\dR}:\mathcal M_{\exp}(k)\to\operatorname{Vect}_k,
\]
and, for every object $M$, a comparison isomorphism
\[
c_M:\omega_{\dR}(M)\otimes_k\C\xrightarrow{\sim}
\omega_\B(M)\otimes_{\Q}\C.
\]
For a smooth pair $(X,S)$ the category contains cohomology objects, denoted here by $H^j_{\exp}(X,S)$, whose realizations are
\begin{equation}\label{eq:exprealizations}
\omega_{\dR}\bigl(H^j_{\exp}(X,S)\bigr)=H^j_{\dR}(X,S),
\qquad
\omega_\B\bigl(H^j_{\exp}(X,S)\bigr)=H^j_{\mathrm{rd}}(X,S;\Q),
\end{equation}
and whose comparison is characterized by the integration pairing \eqref{eq:expperiodpairing}. The category and its cohomology objects are defined in \cite[Def.~4.2.3]{FJ}, with tensor and neutral Tannakian structure given by \cite[Th.~4.4.1]{FJ}, while the de Rham fiber functor and tensor-compatible comparison are constructed in \cite[Def.~8.2.2 and Sec.~8.2.3]{FJ}. External products induce graded motivic K\"unneth isomorphisms compatible with both realizations and integration \cite[Prop.~3.6.3 and Th.~4.4.1]{FJ}. The subsequent arguments use these realizations, comparison, tensor products, duals, subquotients, and Tannakian duality.

The construction applies to our pairs $(\A_k^{d_a},S_a)$ over any subfield $k\subset\C$, without an isolated-critical-locus assumption. Coercivity is needed for the real integration cycle, rather than for the existence of the motive. After Gaussian normalization, Lemma~\ref{lem:top} describes the polynomial representatives and their exact relations.

\subsection{Normalized finite motives}\label{subsec:normalized}
For \eqref{eq:finiteaction}, set
\begin{equation}\label{eq:normalizedmotives}
Q_a=H^{d_a}_{\exp}(\A_k^{d_a},S_a),\quad
L_a=H^{d_a}_{\exp}(\A_k^{d_a},\mathfrak q_{d_a}),\quad
N_a=Q_a\otimes L_a^\vee.
\end{equation}
As checked below, the Gaussian motive $L_a$ has rank one, so its dual implements division by the free Gaussian integral without inverting the interacting partition function. Let $\eta_a$ be the volume-form de Rham frame of $L_a$. Evaluation on the oriented real Gaussian cycle defines a nonzero covector $\chi_a^{\mathrm G}:\omega_\B(L_a)\to\Q$. Its dual basis vector $b_a$ is uniquely characterized by
\[
b_a\in\omega_\B(L_a),\qquad \chi_a^{\mathrm G}(b_a)=1.
\]
Gaussian comparison gives $c_{L_a}(\eta_a)=(2\pi)^{d_a/2}b_a$. Define
\[
a_a(P)=[P\,dz_1\wedge\cdots\wedge dz_{d_a}]\otimes\eta_a^\vee.
\]
The oriented real cycle defines a rational rapid-decay homology class for $S_a$. Indeed, $S_a(z)\to+\infty$ on $\R^{d_a}$, and the compact semialgebraic truncations $\{z\in\R^{d_a}:S_a(z)\le R\}$, at regular values $R$, have boundary in the positive-potential region. Their oriented fundamental chains define the compatible relative classes of the real cycle. Every polynomial form is integrable on it, since $|P(z)e^{-S_a(z)}|\le e^{B_a}|P(z)|e^{-|z|^2/2}$. Write $\chi_a(u)=u([\R^{d_a}])$ for the resulting covector on $\omega_\B(Q_a)$. On $\omega_\B(N_a)=\omega_\B(Q_a)\otimes\omega_\B(L_a)^\vee$, define
\[
\beta_a=\chi_a\otimes b_a,\qquad
\beta_a(u\otimes\ell)=\chi_a(u)\,\ell(b_a)
\quad\bigl(u\in\omega_\B(Q_a),\ \ell\in\omega_\B(L_a)^\vee\bigr).
\]
Thus $\beta_a$ is a rational covector. The dual Gaussian comparison satisfies $c_{L_a^\vee}(\eta_a^\vee)=(2\pi)^{-d_a/2}b_a^\vee$, with $b_a^\vee(b_a)=1$. Writing $dz=dz_1\wedge\cdots\wedge dz_{d_a}$, tensor compatibility and evaluation on the real cycle in \eqref{eq:expperiodpairing} give
\begin{equation}\label{eq:finiteperiod}
\begin{aligned}
\beta_{a,\C}c_{N_a}a_a(P)
&=(2\pi)^{-d_a/2}\chi_{a,\C}\bigl(c_{Q_a}[P\,dz]\bigr)\\
&=(2\pi)^{-d_a/2}\int_{\R^{d_a}}P(z)e^{-S_a(z)}\,dz
=\int Pe^{-f_a}\,d\gamma_{d_a}.
\end{aligned}
\end{equation}
The identity extends to complex polynomials.

In one variable the Gaussian de Rham complex is
\[
k[z]\xrightarrow{\,P\mapsto(P'-zP)dz\,}k[z]dz.
\]
Its kernel is zero by polynomial degree. In its cokernel, $[z\,dz]=0$ and $[z^jdz]=(j-1)[z^{j-2}dz]$, so the cokernel is generated by $[dz]$. This generator is nonzero, since $P'-zP=1$ has no polynomial solution. Tensoring these complexes proves that the $d_a$-variable Gaussian has cohomology only in degree $d_a$, of dimension one. Faithfulness of realization therefore makes $L_a$ an invertible object of the Tannakian category. Integration gives its displayed comparison constant. This also shows that adjoining Gaussian variables introduces no additional degrees in the K\"unneth decomposition used below.

\begin{lemma}\label{lem:top}
The map $a_a:A_a\to\omega_{\dR}(N_a)$ is surjective and
\begin{equation}\label{eq:twistedkernel}
\ker a_a=\sum_{i=1}^{d_a}B_{a,i}A_a,
\qquad B_{a,i}P=\partial_iP-(\partial_iS_a)P.
\end{equation}
\end{lemma}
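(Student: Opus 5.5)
The argument reduces to the algebraic twisted de Rham complex on affine space, and the whole lemma comes down to showing that this complex has no cohomology below the top degree.

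\emph{Reduction to the top twisted cohomology.} By \eqref{eq:exprealizations} and the affineness of $\A_k^{d_a}$, the space $\omega_{\dR}(Q_a)=H^{d_a}_{\dR}(\A^{d_a}_k,S_a)$ is computed by global algebraic forms. It is therefore the cokernel of the last differential of \eqref{eq:twistedDRcomplex}, namely
\[
\Omega^{d_a-1}\xrightarrow{\ \nabla_{S_a}\ }\Omega^{d_a}=A_a\,dz.
\]
Since $\omega_{\dR}(L_a^\vee)$ is the line spanned by $\eta_a^\vee$, the map $P\mapsto[P\,dz]\otimes\eta_a^\vee$ is surjective onto $\omega_{\dR}(N_a)=\omega_{\dR}(Q_a)\otimes_k\omega_{\dR}(L_a^\vee)$. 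Its kernel is the set of $P$ with $P\,dz\in\operatorname{im}\nabla_{S_a}$. This gives surjectivity at once.

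\emph{Computing the image.} Take a $(d_a-1)$-form $\omega=\sum_i(-1)^{i-1}P_i\,dz_1\wedge\cdots\widehat{dz_i}\cdots\wedge dz_{d_a}$. A direct computation gives
\[
\nabla_{S_a}\omega=d\omega-dS_a\wedge\omega=\sum_i\bigl(\partial_iP_i-(\partial_iS_a)P_i\bigr)dz=\sum_i (B_{a,i}P_i)\,dz.
\]
Every $(d_a-1)$-form has this shape for unique $P_i\in A_a$. The image of $\nabla_{S_a}$ is therefore exactly $\bigl(\sum_iB_{a,i}A_a\bigr)dz$, which is \eqref{eq:twistedkernel}.

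\emph{Where care is needed.} The only subtle points are the signs and the identification of the de Rham fiber functor of the Fres\'an--Jossen motive with this naive global cokernel.
\begin{itemize}
\item \emph{Signs.} I will follow the convention $\nabla_S=d-dS\wedge$ fixed in Section~\ref{subsec:exponentialcohomology}. This is what makes $B_{a,i}P=e^{S_a}\partial_i(e^{-S_a}P)$, consistent with the integrals in \eqref{eq:finiteperiod} vanishing on these insertions.
\item \emph{Fiber functor.} I will cite \cite[Sec.~7.1.1 and Def.~8.2.2]{FJ}, together with quasi-coherence on affine space. Then hypercohomology equals the cohomology of the complex of global sections, and degree $d_a$ is the top degree.
\end{itemize}
No other input, such as a finiteness or isolated-critical-point hypothesis, is needed for this statement.
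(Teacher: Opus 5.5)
Your proposal is correct and follows the paper's proof essentially verbatim. Like the paper, you compute $H^{d_a}_{\dR}$ by global polynomial forms on affine space, get surjectivity from the fact that every top-degree form is automatically closed, identify the image of the last differential through the unique decomposition of $(d_a-1)$-forms, and then tensor with the line spanned by $\eta_a^\vee$. One small correction to your opening framing: the lemma does not require any vanishing of twisted cohomology below the top degree. It needs only the description of $H^{d_a}$ as the cokernel of $\nabla_{S_a}:\Omega^{d_a-1}\to\Omega^{d_a}$, which is exactly what your argument goes on to use.
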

\begin{proof}
Let $d=d_a$, $A_a=k[z_1,\ldots,z_d]$, and write $\mathrm{vol}=dz_1\wedge\cdots\wedge dz_d$.

Because $\A_k^d$ is affine and the terms of the twisted de Rham complex are quasi-coherent, hypercohomology is computed by global polynomial differential forms \cite[Sec.~7.1.3]{FJ}. There are no forms of degree $d+1$, so every top-degree form $P\,\mathrm{vol}$ is automatically closed and every de Rham class in degree $d$ has a polynomial representative, proving surjectivity of the unnormalized map
\[
P\longmapsto[P\,\mathrm{vol}].
\]
Tensoring with the fixed nonzero vector $\eta_a^\vee$ therefore proves surjectivity of $a_a$.

Every polynomial $(d-1)$-form can be written uniquely as
\[
\omega=\sum_{i=1}^d(-1)^{i-1}P_i\,
 dz_1\wedge\cdots\wedge\widehat{dz_i}\wedge\cdots\wedge dz_d.
\]
A direct calculation gives
\[
(d-dS_a\wedge)\omega
=\sum_{i=1}^d\bigl(\partial_iP_i-(\partial_iS_a)P_i\bigr)\,\mathrm{vol}
=\sum_{i=1}^dB_{a,i}P_i\,\mathrm{vol}.
\]
Thus a polynomial $P$ represents the zero top de Rham class if and only if
\[
P=\sum_{i=1}^dB_{a,i}P_i
\]
for some polynomials $P_i$.  Equivalently,
\[
\ker a_a=\sum_{i=1}^{d_a}B_{a,i}A_a.
\]
Again tensoring with the one-dimensional Gaussian dual changes neither the equation nor the kernel.
\end{proof}

\paragraph{Adding Gaussian spectators.}
The spectator enlargements of Proposition~\ref{prop:diagonal} preserve the normalized motive and its frames before numerical evaluation.

Let $K\subset K'\subset H$ be finite-dimensional subspaces. Put $U=K'\ominus K$, $d=\dim K$ and $s=\dim U$, and choose orthonormal coordinates $z=(z_1,\ldots,z_d)$ on $K$ and $y=(y_1,\ldots,y_s)$ on $U$. We assume that the changes from the fixed coordinates to these adapted coordinates are defined over $k$, as ensured by Lemma~\ref{lem:commonfield}(2) for $K,K'\in\mathcal K$.

For an interaction $f\in k[z]$, the two actions are
\[
S_K(z)=\mathfrak q_d(z)+f(z),\qquad
S_{K'}(z,y)=S_K(z)+\mathfrak q_s(y).
\]
Write $N_K=Q_K\otimes L_K^\vee$ and $N_{K'}=Q_{K'}\otimes L_{K'}^\vee$ for their normalized motives, as in \eqref{eq:normalizedmotives}, and $a_K,a_{K'}$ for the corresponding polynomial-class maps. Averaging over the additional variables defines $\mathsf C:k[z,y]\to k[z]$ by
\begin{equation}\label{eq:spectator-averaging}
\mathsf C(P)(z)=\int_{\R^s}P(z,y)\,d\gamma_s(y).
\end{equation}
This polynomial represents $\E[P\mid\sigma(X(K))]$ under the free Gaussian measure. For example, if $d=s=1$, then $\mathsf C(z^2y^2+y^3)=z^2$.

\begin{proposition}\label{prop:spectators}
Under these assumptions, there is a canonical isomorphism $\iota:N_{K'}\xrightarrow{\sim}N_K$ in $\mathcal M_{\exp}(k)$ such that
\begin{equation}\label{eq:spectator-class-map}
\omega_{\dR}(\iota)\bigl(a_{K'}(P)\bigr)
=a_K\bigl(\mathsf C(P)\bigr)
\qquad\text{for every }P\in k[z,y].
\end{equation}
If $f$ is bounded below on $\R^d$, it also preserves the physical Betti covectors, with
\begin{equation}\label{eq:spectator-betti-map}
\beta_{K'}=\beta_K\circ\omega_\B(\iota).
\end{equation}
\end{proposition}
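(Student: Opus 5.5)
The plan is to build $\iota$ from the motivic K\"unneth isomorphism and the invertibility of the one-dimensional Gaussian motive. In the adapted coordinates the potential $S_{K'}$ is the external sum $S_K\boxplus\mathfrak q_s$ on $\A^d_k\times\A^s_k$, which is defined over $k$ by Lemma~\ref{lem:commonfield}(2). The graded K\"unneth isomorphism of \cite[Prop.~3.6.3 and Th.~4.4.1]{FJ} then gives
\[
Q_{K'}\cong \bigoplus_{i+j=d+s}H^i_{\exp}(\A^d,S_K)\otimes H^j_{\exp}(\A^s,\mathfrak q_s).
\]
Only $j=s$ contributes, because the Gaussian cohomology is concentrated in top degree and has rank one, as shown after \eqref{eq:finiteperiod}. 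Hence $Q_{K'}\cong Q_K\otimes L_U$, where $L_U=H^s_{\exp}(\A^s,\mathfrak q_s)$. Similarly, $L_{K'}\cong L_K\otimes L_U$ after the orthogonal change to adapted coordinates; this change preserves $\mathfrak q_{d+s}$, so it induces an isomorphism of motives. Tensoring and using $L_U\otimes L_U^\vee\cong\mathbf 1$ through the evaluation map gives the canonical isomorphism $\iota:N_{K'}\to N_K$.

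To prove \eqref{eq:spectator-class-map}, I will work on de Rham realizations, where K\"unneth sends $[P_1(z)\,dz]\otimes[P_2(y)\,dy]$ to $[P_1P_2\,dz\,dy]$. By linearity it suffices to treat monomials $P=P_1(z)y^\beta$. In the one-dimensional Gaussian cohomology, the relation $[y^j dy]=(j-1)[y^{j-2}dy]$ gives $[y^\beta dy]=\E_{\gamma_s}[y^\beta]\,[dy]$, since the Gaussian moments satisfy the same recursion. Therefore $[P\,dz\,dy]$ corresponds to $[\mathsf C(P)\,dz]\otimes[dy]$. The normalizing frame satisfies $\eta_{K'}=\eta_K\otimes\eta_U$, and $\eta_U$ is $[dy]$, so contracting $[dy]$ against $\eta_U^\vee$ yields $a_K(\mathsf C(P))$. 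There is one sign to check: reordering $dz\wedge dy$ under the orientation conventions of K\"unneth and of the coordinate change must be consistent. I will fix this by using the same ordering convention for both $\eta_{K'}$ and the volume form.

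For the Betti statement \eqref{eq:spectator-betti-map}, I will use that K\"unneth is compatible with integration. Under Betti K\"unneth, the real cycle $[\R^{d+s}]$ is the cross product $[\R^d]\times[\R^s]$. Boundedness of $f$ below makes $S_K$ coercive on $\R^d$, so $[\R^d]$ is a rapid-decay class, and the product of the truncations lies in the region where $\Re S_{K'}\to+\infty$. So $\chi_{K'}$ corresponds to $\chi_K\otimes\chi_U^{\mathrm G}$, while the Gaussian covector of $L_{K'}$ corresponds to $\chi^{\mathrm G}_K\otimes\chi^{\mathrm G}_U$. The basis vector then splits as $b_{K'}=b_K\otimes b_U$. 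Pairing with $b_U$ cancels $\chi^{\mathrm G}_U$, and this gives $\beta_{K'}=\beta_K\circ\omega_\B(\iota)$.

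As a consistency check, I will compare periods. By \eqref{eq:finiteperiod} and Fubini, the identity $\int P e^{-f}\,d\gamma_{d+s}=\int\mathsf C(P)e^{-f}\,d\gamma_d$ holds, and it agrees with the two realization identities through the comparison isomorphism.

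I expect the main obstacle to be the Betti step. It requires identifying the rapid-decay cross product of the two real cycles with the real cycle of the product pair, compatibly with the projective limits in \eqref{eq:rdhomology}. The difficulty is that the product of two sublevel truncations is not itself a sublevel set of $S_{K'}$. My first approach is to use cofinality: $\{S_K\le R\}\times\{\mathfrak q_s\le R\}$ is contained in $\{S_{K'}\le 2R\}$ and contains $\{S_{K'}\le R-c\}$ for a constant $c$ coming from the lower bound on $S_K$. Both families therefore define the same compatible relative classes. This is precisely where the hypothesis that $f$ is bounded below enters.
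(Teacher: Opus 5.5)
Your proposal is correct and follows essentially the same route as the paper: K\"unneth together with the invertibility of the Gaussian line and the evaluation map to define $\iota$, the one-variable moment recurrence for the de Rham formula, and the product splitting of the real cycle with $b_{K'}=b_K\otimes b_U$ for the Betti identity. Your cofinality argument for the truncations supplies a detail that the paper covers only by the phrase ``product orientation.'' Your Fubini ``consistency check'' would also suffice by itself to prove the Betti identity from the de Rham one, via naturality of comparison and surjectivity of $a_{K'}$.
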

\begin{proof}
Let $L_U=H^s_{\exp}(\A_k^s,\mathfrak q_s)$. The Gaussian cohomology calculation above shows that $L_U$ is invertible and that the Gaussian factor has cohomology only in degree $s$. Exponential K\"unneth \cite[Prop.~3.6.3 and Th.~4.4.1]{FJ} therefore gives
\[
Q_{K'}\simeq Q_K\otimes L_U,
\qquad
L_{K'}\simeq L_K\otimes L_U.
\]
Define $\iota$ to be the composite
\[
\begin{aligned}
N_{K'}
\simeq(Q_K\otimes L_U)\otimes(L_K^\vee\otimes L_U^\vee)\simeq N_K\otimes(L_U\otimes L_U^\vee)
\xrightarrow{\ \mathrm{id}\otimes\ev_{L_U}\ }N_K.
\end{aligned}
\]
Here $\ev_{L_U}:L_U\otimes L_U^\vee\to\mathbf1$ is the evaluation isomorphism, so this composite realizes Gaussian cancellation as a morphism of motives. Naturality of external products and evaluation makes it compatible with the $k$-defined orthogonal changes of coordinates. For successive enlargements, the resulting maps compose by tensor associativity and evaluation of the successive Gaussian factors.

We next compute the de Rham map. In one Gaussian variable $y$, twisted-exact one-forms have the form $(Q'-yQ)\,dy$. Taking $Q=1$ gives $[y\,dy]=0$, and taking $Q=y^{j-1}$ for $j\ge2$ gives
\[
0=\bigl[((j-1)y^{j-2}-y^j)\,dy\bigr],
\qquad
[y^jdy]=(j-1)[y^{j-2}dy].
\]
Starting with $[dy]$, this is exactly the recurrence for the moments of a standard Gaussian variable $Y$. Hence
\[
[y^jdy]=\E[Y^j]\,[dy]\qquad(j\ge0).
\]
Write a general polynomial as $P(z,y)=\sum_\alpha P_\alpha(z)y^\alpha$. The coordinates $Y_1,\ldots,Y_s$ are independent, so
\[
\mathsf C(P)(z)
=\sum_\alpha P_\alpha(z)\prod_{i=1}^s\E[Y_i^{\alpha_i}].
\]
Each moment is zero or an integer, proving in particular that this is a polynomial over $k$. Applying the one-variable cohomological recurrence in every $y$-coordinate shows that the de Rham K\"unneth isomorphism sends
\begin{equation}\label{eq:spectator-dr-reduction}
[P(z,y)\,dz\wedge dy]
\longmapsto
\bigl[\mathsf C(P)(z)\,dz\bigr]\otimes[dy].
\end{equation}
Here $dz=dz_1\wedge\cdots\wedge dz_d$ and $dy=dy_1\wedge\cdots\wedge dy_s$. The polynomial on the right remains inside the cohomology class, so the formula does not require multiplying a cohomology class by an arbitrary polynomial.

The Gaussian volume frames satisfy $\eta_{K'}=\eta_K\otimes\eta_U$ under K\"unneth, with $\eta_U=[dy]$. Thus tensoring \eqref{eq:spectator-dr-reduction} with $\eta_{K'}^\vee$ and evaluating $\eta_U^\vee(\eta_U)=1$ gives exactly \eqref{eq:spectator-class-map}.

Finally, suppose $f\ge-B$. Then $S_K(z)\ge|z|^2/2-B$ and $S_{K'}(z,y)\ge(|z|^2+|y|^2)/2-B$, so both real cycles define the physical covectors as above. Let $\chi_K$ be the real-cycle covector on $\omega_\B(Q_K)$, and let $b_K$ and $b_U$ be the Gaussian Betti frames dual to the corresponding oriented real cycles. The product orientation gives
\[
\chi_{K'}=\chi_K\otimes b_U^\vee,
\qquad b_{K'}=b_K\otimes b_U.
\]
By definition $\beta_K=\chi_K\otimes b_K$ and $\beta_{K'}=\chi_{K'}\otimes b_{K'}$. Since $b_U^\vee(b_U)=1$, contraction of these frames gives \eqref{eq:spectator-betti-map}.
\end{proof}

Equations~\eqref{eq:spectator-class-map}--\eqref{eq:spectator-betti-map} imply the familiar integral identity
\[
\int_{\R^{d+s}}P(z,y)e^{-f(z)}\,d\gamma_d(z)d\gamma_s(y)
=\int_{\R^d}\mathsf C(P)(z)e^{-f(z)}\,d\gamma_d(z)
\]
when $f$ is bounded below. We will use the motivic identifications to pass between the spaces $H_{n,m}$ and their enlargements in Proposition~\ref{prop:diagonal}.

\subsection{The Tannakian category generated by the cutoff motives}
Inside $\mathcal M_{\exp}(k)$, we restrict to the tensor category generated by the chosen cutoff family $\mathcal T$ of Section~\ref{subsec:field} and the Gaussian line
\[
G_0=H^1_{\exp}(\A_k^1,z^2/2).
\]
We define
\begin{equation}\label{eq:category}
\Cat_{\mathcal T}=\langle G_0,N_a:a\in\mathcal T\rangle_\otimes
\subset \mathcal M_{\exp}(k).
\end{equation}
Here the brackets denote the smallest strictly full subcategory containing the displayed objects and closed under finite direct sums, tensor products, duals, subobjects, and quotients. We write $\Cat=\Cat_{\mathcal T}$. Compatibility of its Galois action with limiting evaluation is studied in Section~\ref{sec:seqgalois}.

For the terminology and reconstruction theorem for neutral Tannakian categories used here, see \cite[Sec.~2]{DM}. The Betti realization restricts to an exact faithful tensor functor
\[
\omega_\B:\Cat\longrightarrow \operatorname{Vect}_{\Q}.
\]
Since $\Cat$ inherits rigidity and $\operatorname{End}(\mathbf1)=\Q$ from the ambient category, it is neutral Tannakian over $\Q$. The de Rham realization is instead a $k$-valued fiber functor
\[
\omega_{\dR}:\Cat\longrightarrow \operatorname{Vect}_{k}.
\]
For a $\Q$-algebra $R$, a point of the Betti Tannakian group is, by definition, a tensor-compatible natural family of automorphisms
\[
g_M:\omega_\B(M)\otimes_{\Q}R\xrightarrow{\sim}
\omega_\B(M)\otimes_{\Q}R,
\qquad M\in\Cat.
\]
Accordingly we set
\[
G_\B=\Aut^\otimes(\omega_\B|_\Cat),
\qquad
G_{\dR}=\Aut^\otimes(\omega_{\dR}|_\Cat).
\]
These are affine group schemes over $\Q$ and $k$, respectively.

Enumerate the generating family, let $\Cat_{\le M}$ be the subcategory generated by the first $M$ objects, and write $G_{\B,\le M}$ for its Betti group. Restriction of tensor automorphisms gives an inverse system with the following properties, by \cite[Th.~2.11 and Props.~2.20(b), 2.21(a)]{DM}.

\begin{proposition}\label{prop:groups}
There are faithfully flat restriction morphisms
\[
G_{\B,\le M+1}\longrightarrow G_{\B,\le M},
\]
each $G_{\B,\le M}$ is an algebraic group of finite type over $\Q$, and
\[
G_\B\simeq\varprojlim_M G_{\B,\le M}
\]
as affine group schemes.  Every finite-dimensional representation furnished by an object of $\Cat$ therefore factors through one of these finite-type quotients.
\end{proposition}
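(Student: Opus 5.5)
The plan is to derive all three assertions from Tannakian duality for the neutral category $\Cat$ with fiber functor $\omega_\B$, following \cite[Th.~2.11 and Props.~2.20(b), 2.21(a)]{DM}. Enumerate the generating family $G_0,N_{a_1},N_{a_2},\ldots$; this is possible because $\mathcal T$ is countable. For each $M$ the subcategory $\Cat_{\le M}$ is generated by finitely many objects, so it is generated by a single object $X_M$, namely the direct sum of its first $M$ generators. By \cite[Prop.~2.20(b)]{DM}, a neutral Tannakian category generated by one object has an algebraic Tannakian group: $G_{\B,\le M}$ embeds as a closed subgroup of $GL(\omega_\B(X_M))$. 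Hence it is of finite type over $\Q$.

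For the transition maps, the inclusion $\Cat_{\le M}\subset\Cat_{\le M+1}$ is a full exact tensor subcategory closed under subquotients. Every object of $\Cat_{\le M}$ is a subquotient of an object of $\Cat_{\le M+1}$, namely itself. So \cite[Prop.~2.21(a)]{DM} shows that the induced restriction $G_{\B,\le M+1}\to G_{\B,\le M}$ is faithfully flat. By the same argument, the restriction $G_\B\to G_{\B,\le M}$ is faithfully flat for each $M$.

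For the limit statement, I would check that $\Cat$ is the union, as a filtered colimit, of the full subcategories $\Cat_{\le M}$. Any object of $\Cat$ is built from finitely many generators by finitely many operations: sums, tensors, duals and subquotients. So it lies in some $\Cat_{\le M}$, and morphisms in $\Cat$ between objects of $\Cat_{\le M}$ are already morphisms in $\Cat_{\le M}$ by fullness. Tannakian reconstruction \cite[Th.~2.11]{DM} identifies the coordinate ring $\mathcal O(G_\B)$ with the coend of $\omega_\B^\vee\otimes\omega_\B$ over $\Cat$. This coend is the filtered colimit of the corresponding coends over the $\Cat_{\le M}$, that is, $\varinjlim_M\mathcal O(G_{\B,\le M})$. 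Passing to spectra gives $G_\B\simeq\varprojlim_M G_{\B,\le M}$ as affine group schemes.

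The final assertion follows from the same observation. A representation $\omega_\B(Y)$ with $Y\in\Cat_{\le M}$ is a $G_\B$-representation through the restriction $G_\B\to G_{\B,\le M}$. By construction of the restriction of tensor automorphisms, the action of $g\in G_\B$ on $\omega_\B(Y)$ depends only on the image of $g$ in $G_{\B,\le M}$.

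The main obstacle is bookkeeping in the colimit step. One has to make sure that the subcategories $\Cat_{\le M}$, each defined as the smallest strictly full subcategory closed under the listed operations, really exhaust $\Cat$. They do, because the union of an increasing chain of such subcategories is itself closed under the operations, so it contains the smallest one $\Cat$. One also has to check that restriction commutes with the coend description. Both points are formal, and I would cite \cite[Sec.~2]{DM} for the general statement that a filtered union of Tannakian subcategories has as its group the limit of their groups.
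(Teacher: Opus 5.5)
Your proposal is correct and follows the paper's proof: finite type comes from the single tensor generator (the direct sum of the first $M$ generators) via \cite[Th.~2.11 and Prop.~2.20(b)]{DM}, faithful flatness comes from fullness and closure under subobjects via \cite[Prop.~2.21(a)]{DM}, and the limit uses the exhaustion of $\Cat$ by the increasing union of the $\Cat_{\le M}$. The differences are cosmetic. You identify $\mathcal O(G_\B)$ with $\varinjlim_M\mathcal O(G_{\B,\le M})$ through the coend, whereas the paper glues compatible tensor automorphisms on functors of points; and your remark that each object of $\Cat_{\le M}$ is a subquotient of itself is superfluous (it has the shape of the closed-immersion criterion of Prop.~2.21(b) rather than 2.21(a)), since fullness and closure under subobjects already supply the needed hypotheses.
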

\begin{proof}
The direct sum $E_M$ of the first $M$ generators is a tensor
generator of $\Cat_{\le M}$, since every object of that category
is a subquotient of a finite direct sum of tensor products of
$E_M$ and its dual. By \cite[Th.~2.11 and Prop.~2.20(b)]{DM},
$G_{\B,\le M}$ is therefore a closed algebraic subgroup of
$\operatorname{GL}(\omega_\B(E_M))$ and hence is of finite type.

Restriction to $\Cat_{\le M}$ gives the morphism from $G_{\B,\le M+1}$ to $G_{\B,\le M}$. The inclusion of categories is fully faithful, and every subobject of an object of $\Cat_{\le M}$ still belongs to $\Cat_{\le M}$. These are precisely the hypotheses of \cite[Prop.~2.21(a)]{DM}, so the restriction morphism is faithfully flat.

The increasing union $\bigcup_M\Cat_{\le M}$ is closed under all the operations defining $\Cat$, since each operation involves only finitely many objects. It therefore equals $\Cat$. For any $\Q$-algebra $R$, a compatible family of tensor automorphisms on the finite stages defines one on $\Cat$ by acting on each object in any stage containing it. Compatibility makes this independent of the stage. This identifies the functors of points and proves the inverse-limit assertion. The representation attached to an object factors through any stage containing that object.
\end{proof}

\subsection{The comparison torsor and framed motivic periods}\label{subsec:torsor}
A framed motivic period is a matrix coefficient on the torsor of tensor-compatible comparisons between the two realizations \cite{BrownPeriods}. Since the fiber functors have different ground fields, we first extend the Betti realization to $k$ and set
\begin{equation}\label{eq:torsor}
T_{\mathcal T}
=\Isom^\otimes_k(\omega_{\dR},\omega_\B\otimes_{\Q}k).
\end{equation}
For a $k$-algebra $R$, an $R$-point of $T_{\mathcal T}$ is a tensor isomorphism
\[
t_M:\omega_{\dR}(M)\otimes_kR
\xrightarrow{\sim}
\omega_\B(M)\otimes_{\Q}R
\]
natural in $M\in\Cat$.  Standard Tannakian theory makes $T_{\mathcal T}$ a torsor. Composition on the Betti side gives a left action of $(G_\B)_k$, and composition on the de Rham side gives a right action of $G_{\dR}$.  The exponential comparison isomorphisms of the preceding subsection assemble to a distinguished complex point
\[
c\in T_{\mathcal T}(\C).
\]
Conjugation by $c$ identifies $(G_{\dR})_\C$ with $(G_\B)_\C$, but need not identify the fiber functors over $k$. For the torsor over a general field see \cite[Sec.~8.2.3]{FJ}. We use its matrix coefficients over $k$, while the presentation in \cite[Sec.~8.3]{FJ} is written over $\Q$.

Define the algebra of motivic periods attached to the chosen cutoff family by
\[
\mcP^{\mathfrak m}_{\mathcal T}=\mathcal O(T_{\mathcal T}).
\]
It is naturally a $k$-algebra.  If $M\in\Cat$, $v\in\omega_{\dR}(M)$ and
$\beta\in\omega_\B(M)^\vee$, the framed matrix coefficient
\[
[M,\beta,v]\in\mcP^{\mathfrak m}_{\mathcal T}
\]
is the regular function
\[
[M,\beta,v](t)=\beta(t_Mv).
\]
It is $k$-linear in $v$, $\Q$-linear in $\beta$, functorial in $M$, and multiplication is induced by tensor product.  Evaluation at the comparison point gives the numerical period map
\begin{equation}\label{eq:periodmap}
\per:\mcP^{\mathfrak m}_{\mathcal T}\longrightarrow\C,
\qquad
\per[M,\beta,v]=\beta(c_Mv).
\end{equation}
These are the usual matrix-coefficient conventions for motivic periods, as in \cite[Sec.~2]{BrownPeriods}.

For the normalized motive $N_a$, the vector $a_a(q_a)$ and the physical rapid-decay covector $\beta_a$ constructed above therefore define
\begin{equation}\label{eq:finiteframed}
p_a^{\mathfrak m}(q)=[N_a,\beta_a,a_a(q_a)].
\end{equation}
By \eqref{eq:finiteperiod},
\[
\per\,p_a^{\mathfrak m}(q)
=\int q_a e^{-f_a}\,d\gamma_{d_a}.
\]
Proposition~\ref{prop:spectators} shows that enlarging the Gaussian space by spectator directions needed to represent $q$ gives the same framed symbol in the common category.

\subsection{Parity and the physical framing}\label{subsec:firstexample}
A nonzero de Rham class need not give a nonzero period symbol when paired with the physical Betti covector. The following quartic example illustrates how this can occur through symmetry.
\begin{example}\label{ex:parity}
Let $S(z)=z^2/2+\alpha z^4$ with $\alpha\in\Q_{>0}$, and use the normalized motive $N=Q\otimes L^\vee$ and frames $a(P),\beta$ of Section~\ref{subsec:normalized}. The observable $z$ has zero physical integral,
\[
\per[N,\beta,a(z)]
=(2\pi)^{-1/2}\int_{\R}z e^{-z^2/2-\alpha z^4}\,dz=0,
\]
because the integrand is odd. 

By Lemma~\ref{lem:top}, the class $[z\,dz]$ could vanish only if a polynomial $P$ satisfied
\[
P'-(z+4\alpha z^3)P=z.
\]
For nonzero $P$ of degree $d$, the left-hand side has degree $d+3$, since its highest-degree term comes from $-4\alpha z^3P$. It cannot equal $z$, and $P=0$ is also impossible. Thus $[z\,dz]\ne0$, and tensoring with the Gaussian dual frame gives $a(z)\ne0$.

To prove motivic vanishing, use the reflection $\rho(z)=-z$, which preserves both potentials. On polynomial forms it acts by
\[
\rho^*(P(z)\,dz)=-P(-z)\,dz.
\]
It acts by $-1$ on the Gaussian line $L$ and its dual. These signs cancel in $N=Q\otimes L^\vee$, so the induced involution $\tau$ satisfies
\[
\tau_{\dR}a(P)=a(P(-z)),
\qquad \tau_{\dR}=\omega_{\dR}(\tau).
\]
In particular, $\tau_{\dR}a(z)=-a(z)$. On the Betti side, reflection reverses the real cycle. Both its covector on $Q$ and the Gaussian Betti frame change sign, so their tensor product $\beta$ is fixed. Thus $\beta\circ\tau_\B=\beta$, where $\tau_\B=\omega_\B(\tau)$.

Every tensor comparison $t$ respects the motivic morphism $\tau$, giving $\tau_\B t_N=t_N\tau_{\dR}$. Consequently the regular function $p=[N,\beta,a(z)]$ satisfies
\[
p(t)=\beta(t_Na(z))
=\beta(\tau_\B t_Na(z))
=\beta(t_N\tau_{\dR}a(z))=-p(t).
\]
This identity holds functorially for every point of the comparison torsor, so $p=-p$ in the period algebra. In characteristic zero, $p=0$. The symmetry therefore forces vanishing before numerical evaluation, even though $a(z)\ne0$. This is a cohomological direction invisible to the physical framing and all its Galois translates, motivating the visible quotient below.
\end{example}

\subsection{Galois profiles and the visible quotient}
We now describe the kernel of the class-to-symbol map for a general physical framing.

Fix $a$.  Let $\Cat_a\subset\Cat$ be a finitely generated Tannakian subcategory containing $N_a$ (and $G_0$ when needed for the normalization), let
\[
G_{\B,a}=\Aut^\otimes(\omega_\B|_{\Cat_a}),
\qquad
T_a=\Isom^\otimes_k(\omega_{\dR}|_{\Cat_a},
\omega_\B|_{\Cat_a}\otimes_\Q k).
\]
Both are of finite type.  Restriction gives faithfully flat maps $G_\B\to G_{\B,a}$ and $T_{\mathcal T}\to T_a$, and every framed symbol involving $N_a$ is the pullback of its matrix coefficient on $T_a$.  Vanishing can therefore be tested on this finite-type torsor, without lifting complex points through the proalgebraic tower.

The comparison point restricts to $c_a\in T_a(\C)$.  Since $T_{a,\C}$ is a torsor under $(G_{\B,a})_\C$ and has the point $c_a$, it is trivial over $\C$.  With the convention that $G_{\B,a}$ acts on the Betti side, the map
\[
G_{\B,a}(\C)\longrightarrow T_a(\C),
\qquad g\longmapsto g^{-1}c_a,
\]
is a bijection.  Hence for $v\in\omega_{\dR}(N_a)$ we define the finite-cutoff Galois profile
\begin{equation}\label{eq:profile}
\Phi_{a,v}(g)=\beta_a g^{-1}c_{N_a}v,
\qquad g\in G_{\B,a}(\C).
\end{equation}

The common kernel of these translates is the realization of a maximal subobject $U_a$. The quotient $N_a^{\mathrm{vis}}$ is the smallest quotient through which the physical covector factors, as in \cite[Lem.~2.4, Sec.~2.4]{BrownPeriods}. We keep that covector fixed and allow all de Rham vectors, which differs from the minimal object attached to a single framed period.

\begin{theorem}\label{thm:profiles}
The following statements hold for $v\in\omega_{\dR}(N_a)$.
\begin{enumerate}
\item $\Phi_{a,v}(g)$ is the value of the motivic period $[N_a,\beta_a,v]$ at the comparison $g^{-1}c_a$.  Consequently
\[
[N_a,\beta_a,v]=0
\quad\Longleftrightarrow\quad
\Phi_{a,v}(g)=0\ \text{for every }g\in G_{\B,a}(\C).
\]
\item There is a largest subobject $U_a\subset N_a$ whose Betti realization is annihilated by $\beta_a$.  On complex fibers,
\[
\omega_\B(U_a)_\C
=\bigcap_{g\in G_{\B,a}(\C)}\ker(\beta_a g^{-1}).
\]
Writing $N_a^{\mathrm{vis}}=N_a/U_a$, one has
\[
[N_a,\beta_a,v]=0
\quad\Longleftrightarrow\quad
v\in\omega_{\dR}(U_a).
\]
\end{enumerate}
\end{theorem}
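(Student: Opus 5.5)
The plan is to argue on the finite-type torsor $T_a$ throughout, using the bijection $G_{\B,a}(\C)\to T_a(\C)$, $g\mapsto g^{-1}c_a$, recorded just before \eqref{eq:profile}. For part (1), evaluate the regular function $[N_a,\beta_a,v]$ at the complex point $t=g^{-1}c_a$. The matrix-coefficient definition gives $\beta_a(t_{N_a}v)=\beta_a g^{-1}_{N_a}c_{N_a}v=\Phi_{a,v}(g)$. For the equivalence, pull back to $T_a$ as explained above. Since $T_a$ is of finite type over $k$ and becomes a trivial torsor over $\C$, $T_{a,\C}\simeq(G_{\B,a})_\C$ is reduced, because group schemes of finite type in characteristic zero are smooth by Cartier's theorem. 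Hence a regular function on $T_a$ that vanishes at every $\C$-point is zero: its base change to $\C$ vanishes on all closed points of a reduced finite-type $\C$-scheme, hence is nilpotent and so zero, and $\mathcal O(T_a)\to\mathcal O(T_a)\otimes_k\C$ is injective. Faithful flatness of $T_{\mathcal T}\to T_a$ transports the vanishing back to $\mcP^{\mathfrak m}_{\mathcal T}$. The reverse implication is immediate.

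For part (2), I would define $W=\bigcap_{g}\ker(\beta_a g^{-1})\subset\omega_\B(N_a)_\C$ and show that it is the complexified realization of a subobject. First, $W$ is stable under $G_{\B,a}(\C)$: if $w\in W$ and $h\in G_{\B,a}(\C)$, then $\beta_a g^{-1}hw=\beta_a (h^{-1}g)^{-1}w=0$. Next, $W$ is defined over $\Q$. Taking $g$ in $G_{\B,a}(\Q)$ alone is not enough, so instead I would describe $W$ algebraically as the largest subspace of $\ker\beta_a$ stable under the coaction $\omega_\B(N_a)\to\omega_\B(N_a)\otimes\mathcal O(G_{\B,a})$, namely
\[
W_\Q=\{w:(\beta_a\otimes\mathrm{id})\Delta(w)=0\}.
\]
This is a $\Q$-subspace, and it is a subcomodule by coassociativity. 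Its complexification equals $W$ because $G_{\B,a}(\C)$ is Zariski dense in $(G_{\B,a})_\C$. Tannakian duality (\cite[Sec.~2]{DM}), using that $\Cat_a$ is closed under subobjects, then produces $U_a\subset N_a$ with $\omega_\B(U_a)=W_\Q$. It is the largest subobject annihilated by $\beta_a$, since any such subobject is $G_{\B,a}$-stable inside $\ker\beta_a$ and therefore lies in $W_\Q$. The main technical point I expect is this descent and density step; the fallback is to phrase everything via comodules over $\Q$ from the start.

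For the final equivalence, combine part (1) with the comparison. We have $[N_a,\beta_a,v]=0$ if and only if $\beta_a g^{-1}c_{N_a}v=0$ for all $g$, that is, if and only if $c_{N_a}v\in W=\omega_\B(U_a)_\C$. Because $c$ is natural in $M$, it carries $\omega_{\dR}(U_a)_\C$ isomorphically onto $\omega_\B(U_a)_\C$, compatibly with the inclusions into $N_a$. Therefore $c_{N_a}v\in\omega_\B(U_a)_\C$ is equivalent to $v\in\omega_{\dR}(U_a)_\C\cap\omega_{\dR}(N_a)=\omega_{\dR}(U_a)$, the last equality holding because $\omega_{\dR}(U_a)$ is a $k$-subspace.
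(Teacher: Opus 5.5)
Your proposal is correct and follows the paper's proof essentially step for step. In part (1) you use the trivialization $g\mapsto g^{-1}c_a$, reducedness of the finite-type group in characteristic zero, and injectivity of pullback along the faithfully flat map $T_{\mathcal T}\to T_a$. In part (2) you take the kernel of the matrix-coefficient map $w\mapsto(\beta_a\otimes\mathrm{id})\Delta(w)$, which is the paper's $\Psi_a$ up to composition with inversion, then use Zariski density for the complex-fiber formula, Tannakian duality for $U_a$, and naturality of comparison for the final equivalence. The differences are cosmetic: you check the subcomodule property by coassociativity rather than by left translation on $R$-points, and you make explicit the identity $(\omega_{\dR}(U_a)\otimes_k\C)\cap\omega_{\dR}(N_a)=\omega_{\dR}(U_a)$ that the paper leaves implicit.
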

\begin{proof}
The comparison point $c_a\in T_a(\C)$ makes the $G_{\B,a}$-torsor $T_{a,\C}$ trivial. With our convention for the Betti action, every complex point of the torsor is uniquely of the form
\[
t=g^{-1}c_a,
\qquad g\in G_{\B,a}(\C).
\]
Hence the framed function $t\mapsto\beta_a(t_{N_a}v)$
becomes, in these coordinates,
\[
g\longmapsto\beta_ag^{-1}c_{N_a}v=\Phi_{a,v}(g).
\]

The group $G_{\B,a}$ is of finite type in characteristic zero. Hence $(G_{\B,a})_\C$ is reduced, and its complex points are Zariski dense. Since $T_{a,\C}\simeq(G_{\B,a})_\C$ after choosing $c_a$, a regular function on $T_a$ is zero if and only if its values at all complex points are zero. Thus the matrix coefficient on $T_a$ vanishes if and only if
\[
\Phi_{a,v}(g)=0\quad\text{for every }g\in G_{\B,a}(\C).
\]
The framed symbol in the ambient algebra $\mcP^{\mathfrak m}_{\mathcal T}$ is the pullback of this finite-level function along $T_{\mathcal T}\to T_a$. Because that morphism is faithfully flat, the induced pullback of coordinate rings is injective. This proves part~(1).

For part~(2), let $V_a=\omega_\B(N_a)$ and consider the $\Q$-linear matrix-coefficient map
\begin{equation}\label{eq:invisible-coefficient-map}
\Psi_a:V_a\longrightarrow \mathcal O(G_{\B,a}),
\qquad
\Psi_a(w)(g)=\beta_a(g^{-1}w).
\end{equation}
Here the right-hand side denotes the regular matrix coefficient of the finite-dimensional algebraic representation of $G_{\B,a}$ on $V_a$. Equivalently, $\Psi_a$ is obtained from the representation coaction by applying $\beta_a$ to the $V_a$-factor and composing with inversion on the group.  Define
\[
W_a:=\ker\Psi_a\subseteq V_a.
\]
Evaluation at the identity gives
\[
\Psi_a(w)(1)=\beta_a(w),
\]
so $W_a\subseteq\ker\beta_a$. To check stability, let $R$ be any $\Q$-algebra and let $h,g\in G_{\B,a}(R)$. After scalar extension to $R$,
\[
\Psi_{a,R}(hw)(g)
=\beta_{a,R}(g^{-1}hw)
=\Psi_{a,R}(w)(h^{-1}g).
\]
Thus left translation of the regular function $\Psi_a(w)$ describes the matrix coefficient of $hw$.  If $\Psi_a(w)=0$, all of its translations are zero, so the kernel is stable functorially in $R$. Equivalently, $W_a$ is an algebraic subrepresentation of $V_a$.

The subspace $W_a$ is maximal with these properties.  Indeed, if $U\subseteq V_a$ is any $G_{\B,a}$-stable subspace with $U\subseteq\ker\beta_a$, then the restriction of the representation to $U$ takes values in $U$ and $\beta_a|_U=0$.  Hence every matrix coefficient $g\mapsto\beta_a(g^{-1}u)$ with $u\in U$ is the zero regular function.  Therefore $\Psi_a(U)=0$ and $U\subseteq W_a$.

After scalar extension to $\C$, reducedness of $(G_{\B,a})_\C$ and Zariski density of its complex points give the pointwise formula
\begin{equation}\label{eq:invisible-orbit-kernel}
W_{a,\C}
=\bigcap_{g\in G_{\B,a}(\C)}\ker(\beta_ag^{-1}).
\end{equation}
Thus $W_a$ is precisely the largest $G_{\B,a}$-stable subspace of $V_a$ contained in $\ker\beta_a$.  By Tannakian duality it corresponds to a unique largest subobject $U_a\subset N_a$ whose Betti realization is annihilated by $\beta_a$.

Comparison sends
\[
\omega_{\dR}(U_a)_\C
\xrightarrow{\sim}
\omega_\B(U_a)_\C=W_{a,\C}.
\]
Thus for $v\in\omega_{\dR}(N_a)$,
\[
v\in\omega_{\dR}(U_a)
\iff c_{N_a}v\in W_{a,\C}
\iff \beta_ag^{-1}c_{N_a}v=0\quad\forall g\in G_{\B,a}(\C).
\]
By part~(1), the last condition is equivalent to $[N_a,\beta_a,v]=0$.
\end{proof}

\subsection{The finite-cutoff de Rham coaction}\label{subsec:finitecoaction}
The right action of the de Rham Tannakian group on the comparison torsor induces, contravariantly, a coaction on coordinate rings.  Put
\[
\mcP^{\dR}_{\mathcal T}=\mathcal O(G_{\dR}).
\]
Then
\[
\Delta:\mcP^{\mathfrak m}_{\mathcal T}
\longrightarrow
\mcP^{\mathfrak m}_{\mathcal T}\otimes_k\mcP^{\dR}_{\mathcal T}
\]
is the induced algebraic coaction. For a $k$-basis $(e_i)$ of $\omega_{\dR}(M)$ and its dual basis $(e_i^\vee)$, the matrix-coefficient formula of \cite[Sec.~2.2]{BrownPeriods} is
\begin{equation}\label{eq:coaction}
\Delta[M,\beta,v]
=\sum_i[M,\beta,e_i]\otimes[M,e_i^\vee,v]_{\dR}.
\end{equation}
Here $[M,e_i^\vee,v]_{\dR}$ is the regular function $g\mapsto e_i^\vee(g_Mv)$ on $G_{\dR}$. Basis independence follows from the canonical tensor $\sum_i e_i\otimes e_i^\vee$.

With $M=N_a$ and $\beta_a$ fixed, the symbols form a finite-dimensional comodule. Every de Rham vector has a polynomial representative by Lemma~\ref{lem:top}, so all first factors in \eqref{eq:coaction} are polynomial insertion symbols. To pass to sequences, we must control the coaction coefficients uniformly in $a$ and express those first factors using fixed observables (Section~\ref{sec:seqgalois}).

The profiles use the Betti group, whereas \eqref{eq:coaction} uses the de Rham group over $k$. Comparison relates them over $\C$, not in general over $\Q$. For either action to descend to numerical periods, the kernel of evaluation would have to be invariant.

\section{Sequential periods and constructive evaluation}\label{sec:sequences}
We now use the finite-cutoff symbols to define sequential periods and recover the constructive integrals by numerical limiting evaluation. Write
\[
P=\mcP^{\mathfrak m}_{\mathcal T}
\]
for the common period algebra. Here and in Section~\ref{sec:seqgalois}, the letter $P$ is reserved for this
algebra.

Two sequences of symbols represent the same sequential period if they agree at every sufficiently large cutoff. This retains relations between the symbols that cannot be recovered from their numerical limits alone. Along an admissible diagonal, constructive convergence defines an evaluation homomorphism on the algebra generated by the fixed-observable sequences. We then compare the relations along this diagonal with those of the iterated approximation.

\subsection{The ambient eventual-sequence algebra and fixed observables}
Fix the diagonal of Proposition~\ref{prop:diagonal}, and write
\[
q_r=\E[q\mid\sigma(X(K_r))],\qquad
p_r^{\mathfrak m}(q)=[N_r,\beta_r,a_r(q_r)]\in P,
\qquad P=\mcP^{\mathfrak m}_{\mathcal T}.
\]
Conditioning gives a canonical representative at every level, with $q_r=q$ for all sufficiently large $r$. The physical periods converge by construction.
\[
\per(p_r^{\mathfrak m}(q))=\E[q_rw_r]\longrightarrow\mcI_g(q).
\]
Define the \emph{ambient eventual-sequence algebra}
\begin{equation}\label{eq:ambientsequence}
\mathcal S(P)=\frac{\prod_rP}{\bigoplus_rP}.
\end{equation}
We shall also write $\mathcal S_0=\mathcal S(P)$ in Section~\ref{sec:seqgalois}. Define
\[
\mathbf p^{\mathfrak m}(q)=[(p_r^{\mathfrak m}(q))_r]\in\mathcal S_0.
\]

For a presentation by generators and relations, set
\[
R_{\Q}=\Sym_{\Q}(A_{\Q}),
\]
where $A_{\Q}$ is regarded as a rational vector space. Write $X_q$ for the image of $q$ under the canonical inclusion into $R_{\Q}$, a formal generator distinct from the Gaussian random variables $X(u)$. The inclusion is linear, so $X_{uq+vq'}=uX_q+vX_{q'}$, $u,v\in\Q$,
but it does not preserve multiplication. The generator $X_1$ represents the partition function, while the unit of $R_{\Q}$ represents the scalar $1$. Similarly, $X_{qq'}$ represents a single integral with insertion $qq'$, whereas $X_qX_{q'}$ represents the product of the integrals with insertions $q$ and $q'$.

The latter product can be realized using independent copies of the field. For $q_1,\ldots,q_s\in A_{\Q}$, let $\phi_1,\ldots,\phi_s$ be independent fields, each with law $\mu_C$. Since each $q_i e^{-V}$ belongs to $L^1(\mu_C)$, their product on this product probability space is absolutely integrable, with $L^1$ norm $\prod_i\|q_i e^{-V}\|_1$. Fubini's theorem gives
\begin{equation}\label{eq:replica}
	\prod_{i=1}^s\mcI_g(q_i)
	=\int \prod_{i=1}^s q_i(\phi_i)
	e^{-\sum_{i=1}^s V(\phi_i)}\,d\mu_C^{\otimes s}.
\end{equation}
The same factorization holds for the finite-cutoff integrals.

The linear assignment on observables extends uniquely to algebra homomorphisms
\[
\theta_r:R_{\Q}\to P,\qquad X_q\mapsto p_r^{\mathfrak m}(q).
\]
Define
\begin{equation}\label{eq:Jdiag}
J_{\mathrm{diag}}=\{F\in R_{\Q}:\theta_r(F)=0\text{ for all sufficiently large }r\},
\qquad \mcB_{\mathrm{diag}}=R_{\Q}/J_{\mathrm{diag}}.
\end{equation}
The set $J_{\mathrm{diag}}$ is an ideal because a common tail works for a finite sum of eventual relations and multiplication preserves zero at each cutoff. For example, for $q,h\in A_{\Q}$, the relation $X_q^2-X_h\in J_{\mathrm{diag}}$ means that
\[
p_r^{\mathfrak m}(q)^2=p_r^{\mathfrak m}(h)\quad\text{in }P
\]
for every sufficiently large $r$. The quotient therefore has the equivalent description
\begin{equation}\label{eq:fixedobservablealgebra}
\mcB_{\mathrm{diag}}
\simeq
\Q[\mathbf p^{\mathfrak m}(q):q\in A_{\Q}]
\subseteq \mathcal S(P),
\end{equation}
where the isomorphism sends $[X_q]$ to $\mathbf p^{\mathfrak m}(q)$. Its elements are finite polynomial expressions with rational coefficients in the fixed-observable sequences. Arbitrary scalar sequences in the ambient algebra are not included as additional generators.

\subsection{Renormalized evaluation as a quotient map}
An arbitrary element of $\mathcal S(P)$ need not have convergent numerical periods. On $R_{\Q}$ we can nevertheless define
\[
\ev_{\infty}:R_{\Q}\longrightarrow\R,
\qquad X_q\longmapsto\mcI_g(q),
\]
and set $J_{\mathrm{num}}=\ker\ev_{\infty}$. The inclusion $J_{\mathrm{diag}}\subseteq J_{\mathrm{num}}$ will allow this map to descend to the sequential quotient. We denote the induced map by $\ev_{\mathrm{ren}}$.

\begin{theorem}[Renormalized evaluation]\label{thm:seq}
There is a well-defined surjective $\Q$-algebra homomorphism
\begin{equation}\label{eq:reneval}
\ev_{\mathrm{ren}}:\mcB_{\mathrm{diag}}\longrightarrow
\mathcal A_g^{\mathrm{num}}:=\Q[\mcI_g(q):q\in A_{\Q}],
\qquad [X_q]\longmapsto\mcI_g(q).
\end{equation}
With the notation above, the sequence
\begin{equation}\label{eq:exactsequence}
0\longrightarrow J_{\mathrm{num}}/J_{\mathrm{diag}}
\longrightarrow\mcB_{\mathrm{diag}}
\xrightarrow{\ev_{\mathrm{ren}}}\mathcal A_g^{\mathrm{num}}
\longrightarrow0
\end{equation}
is exact.
\end{theorem}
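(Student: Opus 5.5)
The whole argument reduces to the inclusion $J_{\mathrm{diag}}\subseteq J_{\mathrm{num}}$. Once that holds, $\ev_\infty:R_{\Q}\to\R$ factors through $\mcB_{\mathrm{diag}}=R_{\Q}/J_{\mathrm{diag}}$, and exactness of \eqref{eq:exactsequence} is formal. The image of $\ev_\infty$ is the $\Q$-algebra generated by the values $\mcI_g(q)$, which is $\mathcal A_g^{\mathrm{num}}$, so the induced map is surjective. Its kernel is $J_{\mathrm{num}}/J_{\mathrm{diag}}$ by the third isomorphism theorem. Here $\ev_\infty$ is the unique $\Q$-algebra homomorphism extending the $\Q$-linear map $q\mapsto\mcI_g(q)$ from $A_{\Q}$. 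That map is linear because $\mcI_g$ is an integral against the finite measure $e^{-V}\mu_C$, and the universal property of $\Sym_{\Q}$ then gives the extension.

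To prove the inclusion, I would introduce for each $r$ the composite $\per\circ\theta_r:R_{\Q}\to\C$. It is an algebra homomorphism because $\per$, being evaluation at the comparison point $c$, is a ring homomorphism on $P=\mathcal O(T_{\mathcal T})$. On generators it takes the value $\per\,p_r^{\mathfrak m}(q)=\E[q_rw_r]$. The key claim is the pointwise convergence
\[
\per\theta_r(F)\longrightarrow\ev_\infty(F)\qquad(F\in R_{\Q}).
\]
On a generator $X_q$, this holds because $q_r=q$ for all large $r$ (Proposition~\ref{prop:diagonal}), so $\E[q_rw_r]=\E[qw_r]\to\E[qw]=\mcI_g(q)$ by the $L^1$ convergence $qw_r\to qw$. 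A general $F$ is a finite $\Q$-linear combination of finite products of generators. Since both sides are algebra homomorphisms and limits of finite sums and products of convergent complex sequences are the sums and products of the limits, the convergence extends to all of $R_{\Q}$. The replica identity \eqref{eq:replica} gives the same conclusion at the level of integrals, but it is not needed here.

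Now take $F\in J_{\mathrm{diag}}$. Then $\theta_r(F)=0$ in $P$ for all large $r$, so $\per\theta_r(F)=0$ eventually, and the limit gives $\ev_\infty(F)=0$, that is, $F\in J_{\mathrm{num}}$. I do not expect a serious obstacle. The only point needing care is that the evaluated sequence really is $\E[q_rw_r]$ with the conditioned representative $q_r$ fixed by the diagonal, which is \eqref{eq:finiteperiod} together with the definition \eqref{eq:finiteframed}. A second point is that $\ev_\infty$ is real-valued while $\per$ lands in $\C$; this is harmless because the finite periods are real integrals of real polynomials. Finally, the identification \eqref{eq:fixedobservablealgebra} shows that the resulting map sends $\mathbf p^{\mathfrak m}(q)$ to $\mcI_g(q)$, which is the statement in Main result~\ref{main:construction}.
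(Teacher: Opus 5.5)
Your proposal is correct and is essentially the paper's proof. Both use the fact that $\per\circ\theta_r$ is an algebra homomorphism, so constructive convergence on the generators gives $\per\theta_r(F)\to\ev_\infty(F)$. Both then deduce $J_{\mathrm{diag}}\subseteq J_{\mathrm{num}}$ from eventual vanishing and read off surjectivity and the kernel $J_{\mathrm{num}}/J_{\mathrm{diag}}$ formally.
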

\begin{proof}
Write $F\in R_{\Q}$ as $F=\mathcal P(X_{q_1},\ldots,X_{q_s})$, with fixed observables $q_i$ and $\mathcal P\in\Q[T_1,\ldots,T_s]$. Since $\theta_r$ and $\per$ are algebra homomorphisms, constructive convergence gives
\begin{equation}\label{eq:polynomial-period-limit}
\lim_r\per\theta_r(F)
=\mathcal P\bigl(\mcI_g(q_1),\ldots,\mcI_g(q_s)\bigr)
=\ev_{\infty}(F).
\end{equation}
If $F\in J_{\mathrm{diag}}$, the sequence on the left is eventually zero. Hence $J_{\mathrm{diag}}\subseteq J_{\mathrm{num}}$, and evaluation factors through $R_{\Q}/J_{\mathrm{diag}}$. Its image is $\mathcal A_g^{\mathrm{num}}$ by definition, and its kernel is $J_{\mathrm{num}}/J_{\mathrm{diag}}$.
\end{proof}
A relation in $J_{\mathrm{num}}/J_{\mathrm{diag}}$ may already hold numerically at every finite cutoff without holding motivically, or arise only after taking the limit. No injectivity of the finite numerical period map is assumed.

\subsection{Relations created by a limit}
The following quartic family gives a nonzero sequential class with zero limiting evaluation, using a fixed rational observable and a coupling tending to zero. It shows that norm convergence and fixed rational observables alone do not make \eqref{eq:reneval} injective, but does not prove non-injectivity for the spatial model, whose positive coupling in \eqref{eq:Vn} remains fixed.

\begin{example}[A fixed rational observable with a new limiting relation]\label{ex:emergent}
On a sequence of independent standard Gaussian coordinates consider
\[
S_r(x)=\tfrac12\sum_{j=1}^r x_j^2+r^{-1}x_1^4,
\qquad q=x_1^2-1.
\]
The observable has rational coefficients, degree two, and is independent of $r$. The weights $w_r=e^{-x_1^4/r}$ converge to $1$ in every finite $L^p$ by dominated convergence. Integrating the derivative of $xe^{-x^2/2-x^4/r}$ gives
\[
\E[(x_1^2-1)e^{-x_1^4/r}]
=-\frac4r\E[x_1^4e^{-x_1^4/r}]<0.
\]
Although the left-hand side tends to zero, its finite framed symbol cannot vanish because its numerical evaluation is nonzero at every cutoff. Hence $[X_q]\ne0$ in the diagonal sequence algebra, whereas its renormalized evaluation vanishes.
\end{example}

\subsection{Preserving iterated relations along a diagonal}

An admissible diagonal gives the right numerical limit, but may miss an eventual relation of the two-parameter approximation when that relation holds only beyond a threshold in $m$ depending on $n$. We can choose the diagonal to meet these thresholds as well as the analytic estimates. Countability of the observable algebra makes it possible to preserve all such relations simultaneously.

To specify the relations to be preserved, set
\[
\theta_{n,m}(X_q)
=[N_{n,m},\beta_{n,m},a_{n,m}(q_{n,m})]
\]
and extend to $R_{\Q}$ as before. Define
\begin{equation}\label{eq:Jit}
		J_{\mathrm{it}}=\{F\in R_{\Q}:
		\ \exists n_0\ \forall n\ge n_0\ \exists m_0(n) \ \forall m\ge m_0(n),\ \theta_{n,m}(F)=0\}.
\end{equation}
Thus $J_{\mathrm{it}}$ records eventual motivic relations in the original order of approximation, without choosing a diagonal. It is an ideal, and we write
\[
\mcB_{\mathrm{it}}=R_{\Q}/J_{\mathrm{it}}.
\]
This quotient uses the same observable generators as
$\mcB_{\mathrm{diag}}$, but imposes the relations of
\eqref{eq:Jit}. We call an analytically admissible diagonal
\emph{relation-preserving} if
$J_{\mathrm{it}}\subseteq J_{\mathrm{diag}}$.

The iterated convergence of Theorem~\ref{thm:fixed} gives an evaluation map on this quotient too.

\begin{proposition}\label{prop:iterevaluation}
	Renormalized evaluation also defines a surjection
	$\mcB_{\mathrm{it}}\to\mathcal A_g^{\mathrm{num}}$
	with kernel $J_{\mathrm{num}}/J_{\mathrm{it}}$.
\end{proposition}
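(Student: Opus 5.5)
The plan is to follow the proof of Theorem~\ref{thm:seq}, with the diagonal limit replaced by the iterated limit of Theorem~\ref{thm:fixed}. We take $\ev_\infty:R_{\Q}\to\R$, $X_q\mapsto\mcI_g(q)$, as before, so that $J_{\mathrm{num}}=\ker\ev_\infty$. It then suffices to prove the inclusion $J_{\mathrm{it}}\subseteq J_{\mathrm{num}}$. Once this holds, $\ev_\infty$ factors through $\mcB_{\mathrm{it}}=R_{\Q}/J_{\mathrm{it}}$, its image is $\mathcal A_g^{\mathrm{num}}$ by definition, and its kernel is $J_{\mathrm{num}}/J_{\mathrm{it}}$ by the third isomorphism theorem.

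To prove the inclusion, write $F=\mathcal P(X_{q_1},\ldots,X_{q_s})$ with $\mathcal P\in\Q[T_1,\ldots,T_s]$. The map $\theta_{n,m}$ is an algebra homomorphism into $P$, since multiplication of framed symbols is induced by tensor product and the linearity of $X_q\mapsto p_{n,m}^{\mathfrak m}(q)$ follows from linearity of conditioning and of $a_{n,m}$. The period map $\per$ is also an algebra homomorphism. Therefore
\[
\per\theta_{n,m}(F)=\mathcal P\bigl(\per p^{\mathfrak m}_{n,m}(q_1),\ldots,\per p^{\mathfrak m}_{n,m}(q_s)\bigr),
\]
and by \eqref{eq:finiteperiod} each entry equals $\int q_{i,n,m}e^{-f_{n,m}}\,d\mu_{n,m}=\E[q_{i,n,m}e^{-f_{n,m}}]$.

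For each fixed $n$, \eqref{eq:innerq} gives $\E[q_{i,n,m}e^{-f_{n,m}}]\to\E[q_ie^{-V_n}]$ as $m\to\infty$. Continuity of $\mathcal P$ then gives $\lim_m\per\theta_{n,m}(F)=\mathcal P(\E[q_1e^{-V_n}],\ldots)$. Next, \eqref{eq:outerq} shows that this expression tends to $\ev_\infty(F)$ as $n\to\infty$.

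Now suppose $F\in J_{\mathrm{it}}$. For every $n\ge n_0$, the inner sequence is zero for $m\ge m_0(n)$, so its limit in $m$ is zero. Hence the outer sequence is zero for $n\ge n_0$, and $\ev_\infty(F)=0$.

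The one point needing care is the order of quantifiers in \eqref{eq:Jit}. The threshold $m_0(n)$ may depend on $n$, but the iterated limit takes $m\to\infty$ first at fixed $n$, so this dependence is harmless. This is exactly why the iterated statement \eqref{eq:doubleperiod}, and not the diagonal convergence, is the right input here.

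A smaller point is well-definedness on $R_{\Q}=\Sym_{\Q}(A_{\Q})$. The expression $\ev_\infty(F)$ does not depend on the choice of $\mathcal P$, because $\ev_\infty$ is the unique algebra homomorphism extending the linear map $q\mapsto\mcI_g(q)$.
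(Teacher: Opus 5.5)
Your proposal is correct and follows essentially the same route as the paper: you combine Theorem~\ref{thm:fixed} with the homomorphism property of $\theta_{n,m}$ and $\per$ and the continuity of $\mathcal P$ to obtain $\lim_n\lim_m\per\theta_{n,m}(F)=\ev_\infty(F)$. You then observe that for $F\in J_{\mathrm{it}}$ the inner limits vanish for all large $n$, which gives $J_{\mathrm{it}}\subseteq J_{\mathrm{num}}$, and you pass to quotients; your remarks on the $n$-dependence of $m_0(n)$ and on well-definedness on $R_{\Q}$ are accurate details that the paper leaves implicit.
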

\begin{proof}
For $F=\mathcal P(X_{q_1},\ldots,X_{q_s})$, Theorem~\ref{thm:fixed} and continuity of $\mathcal P$ give
\[
\lim_{n\to\infty}\lim_{m\to\infty}\per\theta_{n,m}(F)
=\ev_{\infty}(F).
\]
If $F\in J_{\mathrm{it}}$, each inner limit is zero for all sufficiently large $n$. Thus $J_{\mathrm{it}}\subseteq J_{\mathrm{num}}$, and the stated map and kernel follow by taking quotients.
\end{proof}
\begin{proposition}\label{prop:relationdiagonal}
There is an analytically admissible diagonal along which every relation in $J_{\mathrm{it}}$ holds eventually, so $J_{\mathrm{it}}\subseteq J_{\mathrm{diag}}$. It can be chosen with $m(r)$ above any prescribed integer sequence. For this choice there is a canonical surjection
\[
\mcB_{\mathrm{it}}\longrightarrow\mcB_{\mathrm{diag}}
\]
commuting with renormalized evaluation.
\end{proposition}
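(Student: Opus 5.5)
The plan is to exploit countability of $R_{\Q}$ and the fact that every requirement on $m(r)$, whether analytic or relational, is a lower bound at fixed ultraviolet index $n=r$. Since $A_{\Q}$ is countable, so is $R_{\Q}=\Sym_{\Q}(A_{\Q})$, and hence so is $J_{\mathrm{it}}$. Enumerate $J_{\mathrm{it}}=\{F_1,F_2,\ldots\}$. For each $F_j$ fix $n_0(F_j)$ and thresholds $m_0(F_j,n)$ as in \eqref{eq:Jit}.

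At stage $r$ I would choose $m(r)$ to satisfy four lower bounds simultaneously:
\begin{itemize}
\item the finitely many analytic inequalities \eqref{eq:diagonal-accuracy};
\item $m(r)\ge r$;
\item the prescribed integer $M_r$;
\item $m(r)\ge m_0(F_j,r)$ for every $j\le r$ with $n_0(F_j)\le r$.
\end{itemize}
This is possible because \eqref{eq:innerconv} gives convergence as $m\to\infty$ at fixed $n=r$, so the analytic inequalities hold for all sufficiently large $m$, not merely for one value. The remaining conditions are finitely many explicit lower bounds. I then define $K_r$ by \eqref{eq:diagonal-spaces}. The proof of Proposition~\ref{prop:diagonal} goes through verbatim, so the diagonal is admissible, and it lies in $\mathcal T$.

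The key step, and the one needing care, is to identify the diagonal symbols with the iterated ones:
\[
\theta_r(X_q)=\theta_{r,m(r)}(X_q)\quad\text{in }P,\qquad q\in A_{\Q}.
\]
Here $\theta_r$ uses the enlarged space $K_r$ and the conditioned observable $q_r$. Put $K=H_{r,m(r)}\subset K'=K_r$. The interaction $f_r=f_{r,m(r)}$ depends only on the $K$-coordinates and is bounded below, and by Lemma~\ref{lem:commonfield}(2) the adapted orthonormal coordinates are defined over $k$. So Proposition~\ref{prop:spectators} supplies an isomorphism $\iota:N_{K'}\to N_K$ satisfying \eqref{eq:spectator-class-map} and \eqref{eq:spectator-betti-map}.

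Functoriality of framed matrix coefficients, $[N',\beta\circ\omega_\B(\iota),v]=[N,\beta,\omega_{\dR}(\iota)v]$, then gives
\[
[N_{K'},\beta_{K'},a_{K'}(q_r)]=[N_K,\beta_K,a_K(\mathsf C(q_r))].
\]
The averaging $\mathsf C$ in \eqref{eq:spectator-averaging} represents conditional expectation onto $\sigma(X(K))$ under the free measure. By the tower property,
\[
\mathsf C(q_r)=\E[\E[q\mid\sigma(X(K_r))]\mid\sigma(X(H_{r,m(r)}))]=q_{r,m(r)}.
\]
Hence the two framed symbols coincide. Since $\theta_r$ and $\theta_{r,m(r)}$ are algebra homomorphisms agreeing on the generators $X_q$, they agree on all of $R_{\Q}$.

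With this identity the inclusion follows directly. For $F_j\in J_{\mathrm{it}}$ and $r\ge\max(j,n_0(F_j))$, the choice $m(r)\ge m_0(F_j,r)$ gives $\theta_r(F_j)=\theta_{r,m(r)}(F_j)=0$. Hence $F_j\in J_{\mathrm{diag}}$, so $J_{\mathrm{it}}\subseteq J_{\mathrm{diag}}$. The identity map of $R_{\Q}$ then induces the surjection $\mcB_{\mathrm{it}}\to\mcB_{\mathrm{diag}}$. It commutes with renormalized evaluation because both evaluations are induced from the same $\ev_\infty$ on $R_{\Q}$, by Theorem~\ref{thm:seq} and Proposition~\ref{prop:iterevaluation}.

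I expect the main obstacle to be the spectator identification. One must check that the hypotheses of Proposition~\ref{prop:spectators} hold: the coordinate changes are over $k$, and $f_r$ is independent of the complement $K_r\ominus H_{r,m(r)}$, which holds since $f_r$ is a polynomial in the $X(\delta_{r,x})$ with $\delta_{r,x}\in H_{r,m(r)}$. One must also confirm that the conditional expectation computed via $\mathsf C$ matches $q_{r,m(r)}$ exactly as polynomials over $k$, not merely almost surely. Both are polynomials in the same $k$-rational coordinates and agree as random variables, so they coincide.
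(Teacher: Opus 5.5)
Your proposal is correct and follows essentially the same route as the paper. Both arguments enumerate the countable ideal $J_{\mathrm{it}}$ and impose at stage $r$ finitely many lower bounds on $m(r)$: the analytic accuracy inequalities, $m(r)\ge r$, the prescribed sequence, and the thresholds for those $F_j$ with $j\le r$ and $n_0(F_j)\le r$. They then transfer the vanishing from $H_{r,m(r)}$ to $K_r$ through the spectator isomorphism of Proposition~\ref{prop:spectators} and the tower property of conditional expectation. Your explicit identification $\theta_r=\theta_{r,m(r)}$ on all of $R_{\Q}$, via functoriality of framed matrix coefficients, spells out a step the paper states only briefly.
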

\begin{proof}
The algebra $A_{\Q}$ is countable because it consists of polynomials with rational coefficients in a countable set of generators.  Therefore the symmetric algebra $R_{\Q}$ is countable, and so is the subset $J_{\mathrm{it}}$.  Enumerate $J_{\mathrm{it}}=\{F_1,F_2,\ldots\}$, repeating elements if the set is finite.  For each $F_j$, choose an integer $n_j$ and, for every $n\ge n_j$, an integer threshold $M_j(n)$ such that
\begin{equation}\label{eq:iterated-relation-threshold}
m\ge M_j(n)\Longrightarrow\theta_{n,m}(F_j)=0.
\end{equation}
These data exist by the definition of $J_{\mathrm{it}}$.

At stage $r$, impose the simultaneous conditions
\[
\begin{cases}
m(r)\ge r,\\
m(r)\ge L(r),\\
m(r)\ge M_j(r)&\text{for every }j\le r\text{ with }n_j\le r,\\
\|f_{r,m(r)}-V_r\|_j+
\|e^{-f_{r,m(r)}}-e^{-V_r}\|_j<r^{-1}&1\le j\le r,
\end{cases}
\]
where $L(r)$ is any prescribed lower-bound sequence.  The list is finite for fixed $r$.  Convergence in $m$ from Proposition~\ref{hyp:constructive} allows one to choose a single $m(r)$ satisfying all of them.

Define $K_r$ by \eqref{eq:diagonal-spaces}.  Then $(r,m(r),K_r)\in\mathcal T$ for every $r$, so this extraction uses the family and coefficient field already fixed in Section~\ref{subsec:field}.  The analytic estimates above reproduce the proof of admissibility, so the resulting diagonal is analytically admissible and satisfies the prescribed lower bound.

For any $j$, once $r\ge\max(j,n_j)$ the stage-$r$ conditions
include $m(r)\ge M_j(r)$, so
\eqref{eq:iterated-relation-threshold} gives
$\theta_{r,m(r)}(F_j)=0$ on the original finite space
$H_{r,m(r)}$. Passing to $K_r$ only adds Gaussian spectators,
which Proposition~\ref{prop:spectators} cancels through the
canonical identification of normalized motives. For each fixed
observable $q$, the relation between the corresponding representatives
follows from the iterated Gaussian conditioning identity
\[
\E\bigl[\E[q\mid\sigma(X(K_r))]\mid\sigma(X(H_{r,m(r)}))\bigr]
=\E[q\mid\sigma(X(H_{r,m(r)}))].
\]
Under the spectator identification, these representatives determine
the same normalized de Rham class and framed symbol, giving
$\theta_r(F_j)=0$ for every $r\ge\max(j,n_j)$.
Each $F_j$ therefore belongs to $J_{\mathrm{diag}}$, which
proves $J_{\mathrm{it}}\subseteq J_{\mathrm{diag}}$.
Quotienting the identity map on $R_{\Q}$ gives the canonical surjection $\mcB_{\mathrm{it}}\twoheadrightarrow\mcB_{\mathrm{diag}}$,
and both renormalized evaluations send $X_q$ to the same number $\mcI_g(q)$, so the diagram commutes.
\end{proof}
This is an existence argument and gives no algorithm for finding the vanishing thresholds. For the chosen diagonal we have
\begin{equation}\label{eq:threeideals}
J_{\mathrm{it}}\subseteq J_{\mathrm{diag}}\subseteq J_{\mathrm{num}},
\end{equation}
and hence canonical surjections 
$\mcB_{\mathrm{it}}\twoheadrightarrow
\mcB_{\mathrm{diag}}\twoheadrightarrow
\mathcal A_g^{\mathrm{num}}$.

\section{Galois structures on sequential symbols}\label{sec:seqgalois}
We now ask how the Galois structures of the finite models pass to sequential symbols and interact with constructive evaluation. Applying a fixed Galois transformation at every cutoff always defines an action on sequences, but the transformed numerical periods need not converge. Passing the coaction itself to sequences imposes additional algebraic conditions.

We first examine these conditions and construct an algebra of sequential symbols stable under all Galois transformations. Within this algebra, we identify the largest invariant subalgebra on which limiting evaluation is defined. For a nonzero spatial cutoff, we then exhibit a single Galois transformation under which the partition periods diverge along every admissible diagonal. This excludes the partition symbol from that invariant subalgebra and rules out a coaction on the full fixed-observable algebra compatible with the finite coactions.

Throughout this section, we write
\[
P=\mcP^{\mathfrak m}_{\mathcal T},
\qquad
\Hopf=\mathcal O(G_{\dR}),
\qquad
\rho:P\longrightarrow P\otimes_k\Hopf
\]
for the common period algebra, the coordinate algebra of the de Rham
Galois group, and the finite coaction.

\subsection{The componentwise sequential map and the finite-span obstruction}
Retain $\mathcal S_0=\mathcal S(P)$ from Section~\ref{sec:sequences} and set
\begin{equation}\label{eq:seqtargets}
\mathcal S_1=
\frac{\prod_{r\ge1}(P\otimes_k\Hopf)}
{\bigoplus_{r\ge1}(P\otimes_k\Hopf)}.
\end{equation}
Here the tensor product is algebraic. Applying the finite coaction at each cutoff gives
\[
\widehat\rho:\mathcal S_0\longrightarrow\mathcal S_1,
\qquad [(p_r)]\longmapsto[(\rho(p_r))].
\]
This is a well-defined unital $k$-algebra homomorphism, since $\rho$ preserves eventual equality. Its target is larger than the space $\mathcal S_0\otimes_k\Hopf$ required for an ordinary coaction, and the following criterion describes the difference between these two spaces.

\begin{lemma}[The finite-span criterion]\label{lem:seqtensor}
The natural map
\[
\iota:\mathcal S_0\otimes_k\Hopf\longrightarrow\mathcal S_1,
\qquad [(p_r)]\otimes h\longmapsto[(p_r\otimes h)]
\]
is injective. A class $[(t_r)]$ belongs to its image if and only if there is a finite-dimensional subspace $W\subset \Hopf$ such that $t_r\in P\otimes_kW$ for all sufficiently large $r$.
\end{lemma}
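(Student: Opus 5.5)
The argument is pure linear algebra over $k$. The key point is that a tensor with a fixed finite-dimensional right factor can be split coordinatewise, and that these coordinates can be taken levelwise.

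\emph{Step 1 (image lies in finite spans).} Any element of $\mathcal S_0\otimes_k\Hopf$ is a finite sum $\sum_{j=1}^N[(p^{(j)}_r)]\otimes h_j$. Its image is represented by $t_r=\sum_j p^{(j)}_r\otimes h_j$. Each $t_r$ lies in $P\otimes_kW$ with $W=\Span_k\{h_1,\ldots,h_N\}$, for every $r$. This proves the ``only if'' direction.

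\emph{Step 2 (converse).} Suppose $t_r\in P\otimes_kW$ for $r\ge r_0$, with $W$ finite-dimensional. Choose a $k$-basis $h_1,\ldots,h_N$ of $W$ and dual functionals $h_j^*\in W^\vee$. Extend each $h_j^*$ to a linear functional on $\Hopf$ via a complement of $W$; this extension is only a convenience. Applying $\mathrm{id}_P\otimes h_j^*$ levelwise gives $p^{(j)}_r\in P$ with $t_r=\sum_j p^{(j)}_r\otimes h_j$ for $r\ge r_0$. Set $p^{(j)}_r=0$ for $r<r_0$. Then $[(t_r)]=\iota\bigl(\sum_j[(p^{(j)}_r)]\otimes h_j\bigr)$, because the classes agree modulo finitely supported sequences.

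\emph{Step 3 (injectivity).} This is the only step needing care, since $\mathcal S_0$ is a quotient of an infinite product and tensor products do not commute with infinite products in general. Take $x\in\ker\iota$ and write it as $x=\sum_j[(p^{(j)}_r)]\otimes h_j$, with the $h_j$ linearly independent in $\Hopf$; every element of $\mathcal S_0\otimes_k\Hopf$ has such a form. If $\iota(x)=0$, then $\sum_j p^{(j)}_r\otimes h_j=0$ in $P\otimes_k\Hopf$ for all $r\ge r_0$. Linear independence of the $h_j$ forces $p^{(j)}_r=0$ for each $j$ and each $r\ge r_0$; to see this, apply $\mathrm{id}\otimes h_j^*$ as in Step 2. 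Hence every $[(p^{(j)}_r)]=0$ in $\mathcal S_0$, and $x=0$.

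The main obstacle, mild as it is, is being explicit that the finiteness of the $h_j$ reduces the infinite-product issue to finitely many coordinates, where everything commutes. The argument never uses the algebra structure of $P$ or $\Hopf$, only that they are $k$-vector spaces. The statement therefore holds with $\Hopf$ replaced by any $k$-vector space, and $\mathcal S_1$ is genuinely larger exactly when some sequence $(t_r)$ needs $\Hopf$-components spanning an infinite-dimensional subspace along its tail.
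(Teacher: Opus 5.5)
Your proposal is correct and follows essentially the same route as the paper: for injectivity you rewrite the element with linearly independent $h_j$ and read off the vanishing of each coefficient sequence on a tail, and for the image you take $W=\Span_k\{h_j\}$ in one direction and expand $t_r$ in a basis of $W$, padding with zeros below $r_0$, in the other. The only cosmetic difference is that you extract the coefficients explicitly with the dual functionals $\mathrm{id}_P\otimes h_j^*$, where the paper simply invokes uniqueness of the expansion in $P\otimes_k W$.
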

\begin{proof}
Write an element of $\mathcal S_0\otimes_k\Hopf$ as a finite sum
\[
x=\sum_{i=1}^d[(p_{r,i})]\otimes h_i.
\]
After discarding redundant terms, assume $h_1,\ldots,h_d$ are $k$-linearly independent.  If $\iota(x)=0$, then the sequence
\[
\left(\sum_{i=1}^dp_{r,i}\otimes h_i\right)_r
\]
is finitely supported.  Thus for all sufficiently large $r$,
\[
\sum_{i=1}^dp_{r,i}\otimes h_i=0
\quad\text{in }P\otimes_k\Hopf.
\]
Linear independence of the $h_i$ implies $p_{r,i}=0$ for every $i$ on that tail, so each sequence $(p_{r,i})$ is finitely supported and represents zero in $\mathcal S_0$, giving $x=0$.

For the element above, each component of its image belongs to
\[
P\otimes_kW,
\qquad W=\operatorname{span}_k\{h_1,\ldots,h_d\}.
\]
Thus every element in the image satisfies the stated condition.

Conversely let $[(t_r)]\in\mathcal S_1$ and suppose there is a finite-dimensional $W\subset \Hopf$ and an index $r_0$ such that
\[
t_r\in P\otimes_kW\quad(r\ge r_0).
\]
Choosing a basis $h_1,\ldots,h_d$ of $W$, we can write each
$t_r$, $r\ge r_0$, uniquely in the form
\[
t_r=\sum_{i=1}^dp_{r,i}\otimes h_i.
\]
Completing these coefficient sequences by setting $p_{r,i}=0$
for $r<r_0$ gives
\[
\iota\left(\sum_{i=1}^d[(p_{r,i})]\otimes h_i\right)
=\left[\left(\sum_i p_{r,i}\otimes h_i\right)_r\right].
\]
The representative on the right agrees with $(t_r)$ for every $r\ge r_0$, so the two classes in $\mathcal S_1$ are equal.
\end{proof}
For example, let $P=\Hopf=k[t,t^{-1}]$ and $\rho(t)=t\otimes t$. The sequence $(t^r)_r$ has coaction $(t^r\otimes t^r)_r$, with just one term in each tensor but linearly independent second factors. These factors cannot eventually lie in one finite-dimensional subspace, whereas evaluation at $t=1$ gives the constant sequence $1$. Numerical convergence therefore does not imply the finite-span condition.

Even when the finite-span condition holds, a coaction on the algebra generated by fixed observables requires its first-factor sequences to remain in that algebra. To formulate this requirement over $k$, recall that $\mcB_{\mathrm{diag}}$ is only a $\Q$-subalgebra of $\mathcal S(P)$. Define its $k$-algebra span by
\[
B_k:=k[\mcB_{\mathrm{diag}}]\subseteq\mathcal S(P).
\]
Thus $B_k$ consists of finite $k$-linear combinations of elements of $\mcB_{\mathrm{diag}}$ and is the image of $k\otimes_{\Q}\mcB_{\mathrm{diag}}\to\mathcal S(P)$, without any assumption that this tensor map is injective.

An ordinary coaction on this algebra would have the form
\[
B_k\longrightarrow B_k\otimes_k\Hopf.
\]
Its first factors must be finite expressions in fixed-observable symbols with coefficients in $k$, a condition not ensured by arbitrary cutoff-dependent polynomial representatives. For the spatial model with $g\not\equiv0$, Theorem~\ref{thm:spatial-divergence} will show that this closure property and the finite-span condition cannot both hold on the full algebra.

\subsection{A stable envelope for the action by group points}
For a fixed group point, we can evaluate the second factor at every cutoff without a uniform finite span. Extend scalars at each level and put
\[
P_{\C}=P\otimes_k\C,\qquad
\mathcal S_{\C}=\frac{\prod_rP_{\C}}{\bigoplus_rP_{\C}},
\qquad G=G_{\dR}(\C).
\]
Here scalar extension is taken before the infinite product. Let $B_{\C}$ be the $\C$-subalgebra generated by the image of $\mcB_{\mathrm{diag}}$ in $\mathcal S_{\C}$. As with $B_k$, this is an image algebra, not an identification with $\mcB_{\mathrm{diag}}\otimes_{\Q}\C$.

For $\sigma\in G$, define $\sigma\cdot p=(1\otimes\ev_{\sigma})\rho_{\C}(p)$ and apply this operation componentwise to sequences. Our convention gives $\sigma\cdot(\tau\cdot p)=(\sigma\tau)\cdot p$. Each operation is an algebra automorphism, with inverse obtained from $\sigma^{-1}$, and it preserves eventual equality.

Define the algebra generated by all Galois translates of the fixed-observable algebra by
\begin{equation}\label{eq:galoisenvelope}
\mathcal A_{\mathrm{Gal}}
=\C[\sigma\cdot b:\sigma\in G,\ b\in B_{\C}]
\subseteq\mathcal S_{\C}.
\end{equation}
By the action law, this is the smallest $G$-stable subalgebra containing $B_{\C}$.

\subsection{The domain of limiting evaluation}\label{subsec:limiting-domain}
In the multiplicative example, $\sigma=2\in\C^\times$ sends $t^r$ to $2^rt^r$, so evaluation at $t=1$ changes the constant numerical sequence $1$ into the divergent sequence $2^r$. We need to distinguish convergence at the physical comparison from convergence after every transformation.

The comparison point extends the numerical period homomorphism to
\[
\per_{\C}:P_{\C}\to\C,\qquad p\otimes z\mapsto z\per(p).
\]
Define
\begin{equation}\label{eq:convdomain}
\mathcal A_{\mathrm{conv}}
=\{[(p_r)]\in\mathcal A_{\mathrm{Gal}}:
\lim_r\per_{\C}(p_r)\text{ exists in }\C\}.
\end{equation}
The map
\[
\ev_{\infty}:\mathcal A_{\mathrm{conv}}\to\C,
\qquad [(p_r)]\mapsto\lim_r\per_{\C}(p_r)
\]
is a well-defined unital algebra homomorphism. Theorem~\ref{thm:seq} implies $B_{\C}\subseteq\mathcal A_{\mathrm{conv}}$. On the image of $\mcB_{\mathrm{diag}}$, this limiting evaluation agrees with $\ev_{\mathrm{ren}}$, and composing with $R_{\Q}\to\mcB_{\mathrm{diag}}$ recovers the map called $\ev_{\infty}$ in Section~\ref{sec:sequences}.

\begin{proposition}[Maximal invariant evaluation domain]\label{prop:admissibleprofiles}
Set
\begin{equation}\label{eq:admdomain}
\mathcal A_{\mathrm{adm}}
=\bigcap_{\sigma\in G}\sigma^{-1}\cdot\mathcal A_{\mathrm{conv}}
=\{a\in\mathcal A_{\mathrm{Gal}}:\sigma\cdot a\in\mathcal A_{\mathrm{conv}}
\text{ for all }\sigma\in G\}.
\end{equation}
This is the largest $G$-stable subalgebra of $\mathcal A_{\mathrm{Gal}}$ contained in $\mathcal A_{\mathrm{conv}}$. The limiting profile
\begin{equation}\label{eq:limprofile}
\Phi_\infty:\mathcal A_{\mathrm{adm}}\to\operatorname{Fun}(G,\C),
\qquad \Phi_\infty(a)(\sigma)=\ev_{\infty}(\sigma\cdot a)
\end{equation}
is a unital algebra homomorphism and satisfies
\[
\Phi_\infty(\tau\cdot a)(\sigma)=\Phi_\infty(a)(\sigma\tau),
\qquad \Phi_\infty(a)(1)=\ev_{\infty}(a).
\]
Its kernel is a $G$-stable ideal, and the quotient by this ideal embeds equivariantly into the algebra of functions on $G$.
\end{proposition}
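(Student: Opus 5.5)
The plan is to check the listed properties one at a time. All of them follow from formal facts already in place: each $\sigma\in G$ acts as an algebra automorphism of $\mathcal S_{\C}$ preserving $\mathcal A_{\mathrm{Gal}}$; the action law $\sigma\cdot(\tau\cdot p)=(\sigma\tau)\cdot p$ holds; and $\mathcal A_{\mathrm{conv}}$ is a unital subalgebra on which $\ev_\infty$ is a unital homomorphism.

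\textbf{The two descriptions of $\mathcal A_{\mathrm{adm}}$ agree.} An element $a\in\mathcal A_{\mathrm{Gal}}$ lies in $\sigma^{-1}\cdot\mathcal A_{\mathrm{conv}}$ exactly when $\sigma\cdot a\in\mathcal A_{\mathrm{conv}}$. This is because $\sigma$ preserves $\mathcal A_{\mathrm{Gal}}$, which is the smallest $G$-stable subalgebra containing $B_{\C}$.

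\textbf{Subalgebra.} Each $\sigma^{-1}\cdot\mathcal A_{\mathrm{conv}}$ is the image of a subalgebra under an automorphism, so it is a subalgebra. An intersection of subalgebras is again a subalgebra.

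\textbf{Contained in $\mathcal A_{\mathrm{conv}}$.} Take $\sigma=1$.

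\textbf{$G$-stable.} If $a\in\mathcal A_{\mathrm{adm}}$ and $\tau\in G$, then $\sigma\cdot(\tau\cdot a)=(\sigma\tau)\cdot a\in\mathcal A_{\mathrm{conv}}$ for every $\sigma$. Hence $\tau\cdot a\in\mathcal A_{\mathrm{adm}}$.

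\textbf{Maximal.} Let $\mathcal B\subseteq\mathcal A_{\mathrm{Gal}}$ be $G$-stable with $\mathcal B\subseteq\mathcal A_{\mathrm{conv}}$, and let $b\in\mathcal B$. Then $\sigma\cdot b\in\mathcal B\subseteq\mathcal A_{\mathrm{conv}}$ for all $\sigma$, so $b\in\mathcal A_{\mathrm{adm}}$.

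\textbf{$\Phi_\infty$ is a unital homomorphism.} $\Phi_\infty$ is well defined because $\sigma\cdot a\in\mathcal A_{\mathrm{conv}}$. It is a unital algebra homomorphism into the pointwise algebra $\operatorname{Fun}(G,\C)$ because, for fixed $\sigma$, the map $a\mapsto\ev_\infty(\sigma\cdot a)$ is a composite of unital homomorphisms.

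\textbf{Equivariance and value at $1$.} The identity $\Phi_\infty(\tau\cdot a)(\sigma)=\ev_\infty(\sigma\cdot(\tau\cdot a))=\ev_\infty((\sigma\tau)\cdot a)=\Phi_\infty(a)(\sigma\tau)$ follows from the action law. The value at $\sigma=1$ is $\ev_\infty(a)$, since $1$ acts trivially: $\ev_1$ is the counit, and $(1\otimes\varepsilon)\rho=\mathrm{id}$ componentwise.

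\textbf{Kernel and quotient.} The kernel is an ideal because $\Phi_\infty$ is a homomorphism. It is $G$-stable: if $\Phi_\infty(a)=0$, then $\Phi_\infty(\tau\cdot a)(\sigma)=\Phi_\infty(a)(\sigma\tau)=0$ for every $\sigma$. Equip $\operatorname{Fun}(G,\C)$ with the right-translation action $(\tau\cdot\varphi)(\sigma)=\varphi(\sigma\tau)$. The displayed identity then says $\Phi_\infty$ is $G$-equivariant, so the induced injective map from $\mathcal A_{\mathrm{adm}}/\ker\Phi_\infty$ is an equivariant embedding.

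\textbf{Main obstacle.} The only point needing care is that the componentwise action is a genuine left action by algebra automorphisms preserving eventual equality. The paper states this before the proposition. One also has to check that right translation is a left action on $\operatorname{Fun}(G,\C)$: $(\tau_1\cdot(\tau_2\cdot\varphi))(\sigma)=\varphi(\sigma\tau_1\tau_2)$, which is $((\tau_1\tau_2)\cdot\varphi)(\sigma)$, so the equivariance is consistent.
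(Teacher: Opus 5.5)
Your proposal is correct and follows essentially the same argument as the paper: intersect the translates, get invariance and maximality from the action law, and obtain the homomorphism and equivariance pointwise in $\sigma$. You are somewhat more explicit than the paper in three places, which is harmless: you check that the two descriptions of $\mathcal A_{\mathrm{adm}}$ agree using $G$-stability of $\mathcal A_{\mathrm{Gal}}$, you justify $\Phi_\infty(a)(1)=\ev_\infty(a)$ via the counit identity $(1\otimes\varepsilon)\rho=\mathrm{id}$, and you name the right-translation action on $\operatorname{Fun}(G,\C)$ that makes the embedding equivariant.
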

\begin{proof}
Each $\sigma^{-1}\cdot\mathcal A_{\mathrm{conv}}$ is a subalgebra. Their intersection is invariant because
\[
\sigma\cdot(\tau\cdot a)=(\sigma\tau)\cdot a.
\]
Any invariant subalgebra contained in $\mathcal A_{\mathrm{conv}}$ lies in every such inverse image, which proves maximality.

For each fixed $\sigma$, the map $a\mapsto\ev_{\infty}(\sigma\cdot a)$ is an algebra homomorphism on $\mathcal A_{\mathrm{adm}}$. Hence $\Phi_\infty$ is an algebra homomorphism to the pointwise function algebra. The action law also gives
\[
\Phi_\infty(\tau\cdot a)(\sigma)
=\ev_{\infty}((\sigma\tau)\cdot a)
=\Phi_\infty(a)(\sigma\tau).
\]
Evaluation at $1$ recovers $\ev_{\infty}$. The equivariance identity makes the kernel invariant, and quotienting by it gives the claimed embedding.
\end{proof}
We have the inclusions
\[
B_{\C}\subseteq\mathcal A_{\mathrm{conv}}\subseteq\mathcal A_{\mathrm{Gal}},
\qquad
\mathcal A_{\mathrm{adm}}\subseteq\mathcal A_{\mathrm{conv}}.
\]
These inclusions do not place $B_{\C}$ inside $\mathcal A_{\mathrm{adm}}$, since convergence at the identity is only one of the conditions in \eqref{eq:admdomain}.

Vanishing at the identity is weaker than having zero limiting profile. For an elementary illustration, use the multiplicative example with $B_{\C}$ generated by the constant sequence $a=[(t-1)_r]$. All its translates are constant sequences, and
\[
\Phi_\infty(a)(\sigma)=\sigma-1,\qquad
\Phi_\infty(a)(1)=0.
\]
Unlike the full profile kernel, the physical evaluation kernel is not stable because it contains $a$ but not its translate by $\sigma=2$.

Since the target in \eqref{eq:limprofile} is the algebra of all functions on $G$, pointwise convergence alone gives neither regularity of the profiles nor uniform convergence in $\sigma$.

For a fixed observable $q\in A_{\Q}$, the definition of $\mathcal A_{\mathrm{adm}}$ gives
\begin{equation}\label{eq:orbitadmissibility}
\mathbf p^{\mathfrak m}(q)\in\mathcal A_{\mathrm{adm}}
\quad\Longleftrightarrow\quad
\lim_r\per_{\C}(\sigma\cdot p_r^{\mathfrak m}(q))
\text{ exists for every }\sigma\in G.
\end{equation}
Membership is understood through the image in $\mathcal S_{\C}$. Since $B_{\C}$ is generated by these observable classes, \eqref{eq:orbitadmissibility} for all $q$ is equivalent to $B_{\C}\subseteq\mathcal A_{\mathrm{adm}}$. Although the constructive estimates provide the case $\sigma=1$, we now exhibit a single group point for which convergence fails already for $q=1$.

\subsection{A divergent Galois translate of the spatial partition sequence}\label{subsec:spatial-divergence}
We multiply the full potential, including its Gaussian quadratic term, by an auxiliary parameter $t$ and let $t$ travel once counterclockwise around zero, starting at $t=1$. For the quartic actions considered here, the real integration contour rotates to $-i\R^D$ in the interacting directions. Along the rotated contour, the quartic term is unchanged, while the quadratic term changes sign. We first show that this contour transport is induced by a single Galois element acting on all the finite models.

\begin{lemma}\label{lem:scaling-monodromy}
Let $k\subset\C$, and let $\gamma_{\mathrm{rot}}$ be the closed path of directions at infinity
\[
\gamma_{\mathrm{rot}}(\theta)=e^{-i\theta}\infty,
\qquad 0\le\theta\le2\pi,
\]
based at the positive real direction. Let $U_{\mathrm{rot}}$ denote forward parallel transport in rapid-decay cohomology along $\gamma_{\mathrm{rot}}$. Then
\[
U_{\mathrm{rot}}\in\Aut^\otimes(\omega_\B)
\]
is a tensor automorphism of the Betti fiber functor on $\mathcal M_{\exp}(k)$. For every exponential relative variety
$[X,Y,S,j,i]$, its component on
$H^j_{\mathrm{rd}}(X,Y,S;\Q)(i)$ is the forward monodromy obtained by following
\[
t=e^{i\theta}\longmapsto H^j_{\mathrm{rd}}(X,Y,tS;\Q)(i)
\]
once around $\C^\times$. In particular, the restriction of $U_{\mathrm{rot}}$ to
$\Cat_{\mathcal T}$ is a single tensor automorphism, independent of the cutoff level.
\end{lemma}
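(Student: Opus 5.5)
We would prove Lemma~\ref{lem:scaling-monodromy} by realizing $U_{\mathrm{rot}}$ as the monodromy of a tensor-compatible local system on $\C^\times$, constructed functorially on the generators of $\mathcal M_{\exp}(k)$.

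The first step is to pass from a fixed potential to a family. For each exponential relative variety $[X,Y,S,j,i]$, consider the pair $(X\times\mathbb G_m,\ Y\times\mathbb G_m)$ with potential $(x,t)\mapsto tS(x)$ over the base $\mathbb G_m$. For $t=e^{i\theta}$, the sublevel sets $X^+_R(tS)=\{\Re(tS)\ge R\}$ move continuously in $\theta$. The finiteness and eventual constancy of \eqref{eq:rdhomology}, from \cite[Sec.~3.1.2]{FJ}, should hold locally uniformly in $\theta$. We would check this via the description of rapid-decay homology through a compactification on which $\Re(tS)\to+\infty$ near the polar divisor, in the direction $\arg S=-\theta$ modulo $2\pi$.

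This makes $\theta\mapsto H^j_{\mathrm{rd}}(X,Y,tS;\Q)(i)$ a local system of finite-dimensional $\Q$-spaces on the circle. It is equivalently indexed by the direction at infinity $e^{-i\theta}\infty$ along which $S$ must grow, which explains the parametrization of $\gamma_{\mathrm{rot}}$. At $\theta=2\pi$ the fiber returns to $H^j_{\mathrm{rd}}(X,Y,S;\Q)(i)$, and forward transport defines the component $U_{\mathrm{rot}}$ on that object. Because the construction uses only the topology of the pairs, it is rational, so $U_{\mathrm{rot}}$ acts on $\omega_\B$ itself and not only after extending scalars.

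The second step is naturality and tensor compatibility on the quiver or diagram presenting $\mathcal M_{\exp}(k)$ \cite[Def.~4.2.3]{FJ}. Each edge is induced by a morphism of varieties with potential compatible with the potentials, or by a boundary map, Tate twist, or Künneth map. Every such map is realized fiberwise over $\mathbb G_m$ by the same operation applied to $tS$, and so it commutes with parallel transport.

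For tensor products, the external product of $(X,S)$ and $(X',S')$ carries the potential $S\boxplus S'$. Scaling by $t$ acts diagonally, and the Künneth isomorphism \cite[Prop.~3.6.3]{FJ} extends to families, so it intertwines the transports. The unit object, with potential zero, is fixed. By the universal property of the diagram category, in Nori's formalism as used by Fresán--Jossen, a family of automorphisms of the representation that commutes with all edges and with products extends uniquely to an element of $\Aut^\otimes(\omega_\B)$. Restricting this element to $\Cat_{\mathcal T}$ gives one automorphism for all levels $a$, since no cutoff data enter its definition.

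I expect the main obstacle to be the uniformity in $\theta$. The transition maps in \eqref{eq:rdhomology} must stabilize at a common $R$ along the whole circle, including directions in which some $\Re(tS)$ does not tend to infinity on the relevant chains. I would first try the Fresán--Jossen fact that rapid-decay cohomology agrees with relative cohomology of $(\overline X,\ Y\cup\partial^+_{\theta})$ on a real oriented blow-up. On such a blow-up the boundary piece $\partial^+_\theta$ varies by an isotopy in $\theta$, which gives local triviality directly.
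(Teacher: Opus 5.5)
Your proposal is correct, but it rebuilds by hand what the paper takes from the literature. The paper does not construct the local system itself. It cites the automorphisms of \cite[Sec.~11.12.2]{FJ}, attached to $a\in k^\times$ and a path of directions from $+\infty$ to $a^{-1}\infty$, which send $[X,Y,S,j,i]$ to $[X,Y,aS,j,i]$ and are already compatible with tensor products, duals and the Betti realization. For $a=1$ and the loop $\gamma_{\mathrm{rot}}$ the quiver map is the identity, so the inverse of FJ's Betti identification (which runs from $aS$ to $S$) is an automorphism of the Betti representation. Nori's universal property \cite[Th.~4.1.12]{FJ}, with target the category of rational vector spaces equipped with an automorphism, then makes it natural on all of $\mathcal M_{\exp}(k)$. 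Only afterwards does the paper check, via $H_t(R)=e^{-i\theta}H_1(R)$, that this is the monodromy of $t\mapsto H^j_{\mathrm{rd}}(X,Y,tS;\Q)(i)$. You instead define that monodromy directly as a local system on $|t|=1$, and you verify compatibility with the quiver edges and with a family version of K\"unneth yourself before invoking the same universal property. Your route is self-contained and gives the forward orientation and the monodromy description at once, with no inversion or separate convention check. The paper's route is much shorter because FJ have already packaged local triviality and product compatibility. The uniformity you flag as the main obstacle is also milder than you expect. If $R$ exceeds the modulus of the finitely many bifurcation values of $S$ relative to $Y$, then every half-plane $\{\Re(tu)\ge R\}$ with $|t|=1$ lies in $\{|u|\ge R\}$, over which $S$ is a locally trivial fibration of pairs. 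Hence the groups $H^j(X^{\mathrm{an}},Y^{\mathrm{an}}\cup\{\Re(tS)\ge R\};\Q)$ are locally constant in $t$ and independent of $R$, with no need for the real oriented blow-up (the paper simply asserts such a uniform $R$).
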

\begin{proof}
Fres\'an and Jossen construct in \cite[Sec.~11.12.2]{FJ}, for
$a\in k^\times$ and a path $\gamma$ from the direction $+\infty$ to
$a^{-1}\infty$ in the boundary of the real oriented blowup of
$\mathbb P^1(\C)$ at infinity, an automorphism of the category of
exponential motives compatible with tensor products, duals, and the
Betti realization. On the defining quiver it sends
$[X,Y,S,j,i]$ to $[X,Y,aS,j,i]$. The Betti identification in their
construction is written in the direction
\[
H^j_{\mathrm{rd}}(X,Y,aS;\Q)(i)
\xrightarrow{\ \sim\ }
H^j_{\mathrm{rd}}(X,Y,S;\Q)(i).
\]
Since $\gamma$ is directed from $+\infty$ to $a^{-1}\infty$,
the inverse of this Betti identification is the continuation map
in the forward direction of $\gamma$, from the rapid-decay
realization for $S$ to that for $aS$.

For $a=1$, the quiver endomorphism is the identity, so the inverse
of the Betti identification gives a natural automorphism of the
Betti representation. We may therefore regard this representation
as taking values in the abelian category of finite-dimensional
rational vector spaces equipped with an automorphism. The forgetful
functor to rational vector spaces is exact and faithful, and Nori's
universal property \cite[Th.~4.1.12]{FJ} extends this representation to an exact
functor from $\mathcal M_{\exp}(k)$ to this category whose underlying
vector-space functor is $\omega_\B$. The resulting
forward continuation is consequently natural for every motivic
morphism. The tensor and dual compatibility of the continuation in
\cite[Sec.~11.12.2]{FJ} is preserved on taking inverses, giving
\[
U_{\mathrm{rot}}\in\Aut^\otimes(\omega_\B).
\]

To identify the parameter convention, let $t=e^{i\theta}$ and
choose $R$ sufficiently large for the fixed pair and potential,
uniformly for $0\le\theta\le2\pi$. Rapid-decay cohomology for $tS$
is then computed by
\[
H^j\!\left(X^{\mathrm{an}},
Y^{\mathrm{an}}\cup S^{-1}(H_t(R));\Q\right),
\qquad
H_t(R)=\{u\in\C:\Re(tu)\ge R\}.
\]
Since $|t|=1$,
\[
H_t(R)=e^{-i\theta}H_1(R).
\]
Thus the decay direction follows $\gamma_{\mathrm{rot}}$ as
$\theta$ increases, identifying $U_{\mathrm{rot}}$ with the forward
monodromy of $H^j_{\mathrm{rd}}(X,Y,tS;\Q)(i)$ as $t$ travels once
counterclockwise around $0$.
\end{proof}

Restrict $U_{\mathrm{rot}}$ to $\Cat_{\mathcal T}$ and transport its inverse through the comparison isomorphism. With the action convention of Section~\ref{subsec:torsor}, set
\begin{equation}\label{eq:rotation-galois-point}
\sigma_{\mathrm{rot},M}=c_M^{-1}U_{\mathrm{rot},M}^{-1}c_M,
\qquad \sigma_{\mathrm{rot}}\in G_{\dR}(\C).
\end{equation}
Parallel transport preserves the pairing between homology and
cohomology. If $\beta'$ is the transported cycle covector, then
$\beta'(U_{\mathrm{rot},M}v)=\beta(v)$ for every Betti vector $v$,
so $\beta'=\beta\circ U_{\mathrm{rot},M}^{-1}$. This accounts for
the inverse in \eqref{eq:rotation-galois-point} and gives, for a
framed period,
\[
\per_{\C}\bigl(\sigma_{\mathrm{rot}}\cdot[M,\beta,v]\bigr)
=\beta(c_M\sigma_{\mathrm{rot},M}v)
=\beta(U_{\mathrm{rot},M}^{-1}c_Mv).
\]
The right-hand side is the integral over the transported cycle. The construction takes place on the ambient category before restriction to $\Cat_{\mathcal T}$, so the intermediate potentials $tS$ need not belong to the cutoff family.

\begin{theorem}[Divergence along every admissible diagonal]\label{thm:spatial-divergence}
Suppose $g\not\equiv0$. For an admissible diagonal $(r,m(r),K_r)$, put
\[
D_r=\#\{x\in\Lambda_{m(r)}:g(x)>0\}.
\]
As shown below, $D_r$ is the dimension of the interacting subspace, whereas the full space $K_r$ may also contain Gaussian spectators.
The point $\sigma_{\mathrm{rot}}$ of \eqref{eq:rotation-galois-point} satisfies
\begin{equation}\label{eq:spatial-divergence-bound}
\left|\per_{\C}(\sigma_{\mathrm{rot}}\cdot p_r^{\mathfrak m}(1))\right|
\ge \exp\!\left(D_r-\sqrt{6D_r}\,\|f_r\|_{L^2(\mu_C)}\right)
\longrightarrow+\infty.
\end{equation}
In particular, $\mathbf p^{\mathfrak m}(1)\notin\mathcal A_{\mathrm{adm}}$ and $B_{\C}\not\subseteq\mathcal A_{\mathrm{adm}}$ for every admissible diagonal.
\end{theorem}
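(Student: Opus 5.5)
The plan is to compute the transported partition period explicitly as an integral over a rotated contour, and then bound it from below by Jensen's inequality. First I reduce to the interacting directions. Let $I_r=\Span_{\R}\{\delta_{r,x}:x\in\Lambda_{m(r)},\ g(x)>0\}\subset K_r$. I expect the translates $\delta_{r,x}$ to be linearly independent; this should follow from the Fourier argument of Step~3 of Proposition~\ref{hyp:constructive}, and it gives $\dim I_r=D_r$. The interaction $f_r$ only involves sites with $g(x)>0$, so it is a function of $X(I_r)$, and every direction of $K_r\ominus I_r$ is a Gaussian spectator. By Lemma~\ref{lem:commonfield}(2), after enlarging the coefficient field by finitely many square roots if necessary, I can take adapted orthonormal coordinates defined over $k$. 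Proposition~\ref{prop:spectators} then gives a motivic isomorphism $\iota:N_{K_r}\to N_{I_r}$ that preserves both the class $a(1)$ and the physical covector.

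Since $U_{\mathrm{rot}}$ is a tensor automorphism natural for motivic morphisms (Lemma~\ref{lem:scaling-monodromy}), the transported period $\beta_r(U^{-1}_{\mathrm{rot}}c\,a_r(1))$ can be computed on $N_{I_r}=Q_{I_r}\otimes L_{I_r}^\vee$. The spectator factors disappear entirely.

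Second, I identify the transported cycles. Follow the potential $tS$ with $t=e^{i\theta}$.
\begin{itemize}
\item On $Q_{I_r}$, the quartic part of $S$ is positive definite on $I_r$: it is a sum of $\ell_x^4$ with positive weights, and the $\ell_x$ span the dual of $I_r$. The rapid-decay region is therefore governed by $\Re(t z^4)\to+\infty$. I will show that the real cycle moves continuously through the cycles $e^{-i\theta/4}\R^{D_r}$, ending at $-i\R^{D_r}$. The needed uniform decay estimate follows from the positivity of the quartic form.
\item On $L_{I_r}$, the Gaussian cycle rotates as $e^{-i\theta/2}\R^{D_r}$. It ends at $-\R^{D_r}$, so the normalizing Gaussian period changes only by the sign $(-1)^{D_r}$.
\end{itemize}
Substituting $z=-iy$ and using $\ell_x(-iy)^4=\ell_x(y)^4$ and $\ell_x(-iy)^2=-\ell_x(y)^2$, I expect
\[
\bigl|\per_{\C}(\sigma_{\mathrm{rot}}\cdot p_r^{\mathfrak m}(1))\bigr|
=(2\pi)^{-D_r/2}\int_{\R^{D_r}}e^{|y|^2/2-\tilde f_r(y)}\,dy
=\E_{\gamma_{D_r}}\bigl[e^{|y|^2-\tilde f_r(y)}\bigr],
\]
where $\tilde f_r=\lambda2^{-2m}\sum_x g(x)\bigl(\ell_x^4+6c_r\ell_x^2+3c_r^2\bigr)$. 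This integral converges because the quartic term dominates. I expect this contour identification, including tracking the signs and the uniformity of the decay at infinity along the path, to be the main obstacle.

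Third, I bound the integral. Jensen's inequality gives $\E[e^{|y|^2-\tilde f_r}]\ge\exp(D_r-\E\tilde f_r)$. Since the coordinates are orthonormal, $\E\ell_x^2=c_r$ and $\E\ell_x^4=3c_r^2$. Writing $w_x=\lambda2^{-2m}g(x)\ge0$, this gives
\[
\E\tilde f_r=12\sum_x w_xc_r^2.
\]
On the other hand, the covariance identity yields
\[
\|f_r\|_2^2=24\sum_{x,x'}w_xw_{x'}\pair{\delta_{r,x}}{\delta_{r,x'}}_H^4\ge24\sum_xw_x^2c_r^4,
\]
where I drop the off-diagonal terms because they are nonnegative. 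Cauchy--Schwarz over the $D_r$ sites then gives $24\sum_x w_x^2c_r^4\ge 24\bigl(\sum_x w_xc_r^2\bigr)^2/D_r$. Combining these, $\E\tilde f_r\le12\sqrt{D_r/24}\,\|f_r\|_2=\sqrt{6D_r}\,\|f_r\|_2$, which is \eqref{eq:spatial-divergence-bound}.

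Finally, I prove divergence. The set $\{g>0\}$ is a nonempty open set, so $D_r\gtrsim 2^{2m(r)}\to\infty$. By \eqref{eq:diagonalconv}, $\|f_r\|_2\to\|V\|_2<\infty$. Hence $D_r-\sqrt{6D_r}\,\|f_r\|_2\to+\infty$. The non-membership statements then follow from \eqref{eq:orbitadmissibility} with $q=1$.
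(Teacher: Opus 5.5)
Your proposal is correct and follows the paper's proof step for step: spectator cancellation down to the $D_r$-dimensional span of the interacting translates, transport of the real cycle through $e^{-i\theta/4}\R^{D_r}$ against the Gaussian contour $e^{-i\theta/2}\R^{D_r}$, Jensen's inequality giving $D_r-12c_r^2\sum_x w_x$, the diagonal-term plus Cauchy--Schwarz bound on $\|f_r\|_2$, and divergence from $D_r\to\infty$ with $\|f_r\|_2$ bounded. The paper records the full phase $i^{D_r}$, of which you only need the modulus.

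Two small remarks. First, no enlargement of $k$ is needed, and it is better avoided since the category depends on $k$: $k$ is already closed under positive square roots, and the interacting span belongs to $\mathcal K$. Second, linear independence of the translates comes most directly from $\widehat h_n(0)=1$, so that $\widehat h_n(\zeta)\sum_x a_xe^{-ix\cdot\zeta}=0$ forces the exponential polynomial to vanish near $0$; the density statement of Step~3 does not give it by itself.
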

\begin{proof}
\medskip\noindent\textit{Step 1. Reduction to the interacting subspace.}
At a finite level $a=(n,m,K)\in\mathcal T$, we isolate the sites
contributing to the interaction by writing
\[
I_m=\{x\in\Lambda_m:g(x)>0\},\qquad
D=|I_m|,\qquad b_x=\lambda2^{-2m}g(x),\qquad c=c_n.
\]
When $D=0$, the interaction vanishes and the Gaussian-normalized
partition symbol equals $1$, so the finite-cutoff bound holds with
equality. We may therefore assume $D>0$ and work on the interacting
subspace
\[
K^{\mathrm{act}}=\Span_{\R}\{\delta_{n,x}:x\in I_m\}.
\]
To establish linear independence of these translates, the Fourier
realization of the covariance norm turns any relation among them into
\[
\widehat h_n(\zeta)\sum_{x\in I_m}a_xe^{-ix\cdot\zeta}=0.
\]
Since $\widehat h_n(0)=1$, the exponential polynomial vanishes on a neighborhood of zero, hence identically, and all $a_x$ vanish. Thus $\dim K^{\mathrm{act}}=D$, with all directions in $K\ominus K^{\mathrm{act}}$ acting as Gaussian spectators. The splitting is defined over $k$ by the Gram--Schmidt construction of Section~\ref{subsec:field}, and Proposition~\ref{prop:spectators} cancels these factors motivically while preserving the partition framing. The reduced normalized motive is isomorphic to $N_a$ in the ambient category and hence belongs to the strictly full category $\Cat_{\mathcal T}$.

In orthonormal coordinates $z\in\R^D$ on $K^{\mathrm{act}}$, write $\ell_x(z)$ for $X(\delta_{n,x})$. The $D$ forms $\ell_x$ are linearly independent, satisfy $\E[\ell_x(Z)^2]=c$ for $Z\sim\gamma_D$, and give
\begin{equation}\label{eq:active-action}
S(z)=\frac{|z|^2}{2}+\sum_{x\in I_m}b_x
\bigl(\ell_x(z)^4-6c\ell_x(z)^2+3c^2\bigr)
=P_4(z)+P_2(z)+P_0.
\end{equation}
Here $P_j$ denotes the homogeneous term of degree $j$. Independence
of the forms $\ell_x$ and positivity of the coefficients $b_x$
make $P_4$ strictly positive on the unit sphere, so homogeneity gives
$P_4(z)\ge\kappa|z|^4$ for some $\kappa>0$ at this cutoff.

\medskip\noindent\textit{Step 2. Transport of the physical cycle.}
Along the loop $t=e^{i\theta}$, consider the contours
$\Gamma_\theta=e^{-i\theta/4}\R^D$, $0\le\theta\le2\pi$,
with the orientations induced by their parametrizations. The
transformed action satisfies
\[
e^{i\theta}S(e^{-i\theta/4}z)
=P_4(z)+e^{i\theta/2}P_2(z)+e^{i\theta}P_0.
\]
Its real part is bounded below by $\kappa|z|^4-C_2|z|^2-C_0$, with constants independent of $\theta$ at this cutoff. This bound keeps the tails of every contour in the rapid-decay region throughout the loop, so the family represents parallel transport of the real cycle. By Lemma~\ref{lem:scaling-monodromy}, this homological transport is dual to the inverse of the cohomological transport $U_{\mathrm{rot}}$ on $H^D_{\exp}(\A_k^D,S)$. The categorical construction requires no uniform estimate in $(n,m)$, since the displayed bound only verifies that the explicit contour represents the transported cycle at each cutoff. At the endpoint the contour is $-i\R^D$, with volume factor $(-i)^D$.

For the Gaussian normalization, the real contour moves through $e^{-i\theta/2}\R^D$ and returns with sign $(-1)^D$. The dual Gaussian factor has the reciprocal monodromy, giving the total phase
\[
\frac{(-i)^D}{(-1)^D}=i^D.
\]
Consequently the normalized period is
\begin{equation}\label{eq:rotated-partition}
\per_{\C}(\sigma_{\mathrm{rot}}\cdot p_a^{\mathfrak m}(1))=i^D J_{n,m},
\end{equation}
where
\begin{equation}\label{eq:positive-rotated-integral}
J_{n,m}=\frac1{(2\pi)^{D/2}}\int_{\R^D}
\exp\!\left(\frac{|z|^2}{2}
-\sum_{x\in I_m}b_x\bigl(\ell_x(z)^4+6c\ell_x(z)^2+3c^2\bigr)\right)dz.
\end{equation}
This integral is finite and positive. The spectator cancellation above is compatible with $U_{\mathrm{rot}}$, so \eqref{eq:rotated-partition} holds for every allowed enlargement $K$.

\medskip\noindent\textit{Step 3. Lower bound and divergence.}
To apply Jensen's inequality, we restore the standard Gaussian density
in \eqref{eq:positive-rotated-integral} and write, for $Z\sim\gamma_D$,
\[
J_{n,m}=\E\exp\!\left[|Z|^2-\sum_{x\in I_m}b_x
\bigl(\ell_x(Z)^4+6c\ell_x(Z)^2+3c^2\bigr)\right].
\]
The extra $|Z|^2/2$ compensates for the factor $e^{-|Z|^2/2}$ in $d\gamma_D$. Since $\E|Z|^2=D$, $\E\ell_x(Z)^2=c$ and $\E\ell_x(Z)^4=3c^2$, Jensen's inequality gives
\begin{align}
\log J_{n,m}
&\ge \E\!\left[|Z|^2-\sum_{x\in I_m}b_x
\bigl(\ell_x(Z)^4+6c\ell_x(Z)^2+3c^2\bigr)\right]\notag\\
&=D-12c^2\sum_{x\in I_m}b_x.
\label{eq:rotated-jensen}
\end{align}
We control the subtracted term by the $L^2$ norm of the interaction. The Wick covariance identity of Section~\ref{subsec:gaussian-toolbox} gives
\begin{align}
\|f_{n,m}\|_{L^2(\mu_C)}^2
&=24\sum_{x,y\in I_m}b_xb_y
\pair{\delta_{n,x}}{\delta_{n,y}}_H^4\notag\\
&\ge24c^4\sum_{x\in I_m}b_x^2
\ge\frac{24c^4}{D}\left(\sum_{x\in I_m}b_x\right)^2.
\label{eq:active-chaos-bound}
\end{align}
The first inequality keeps only the diagonal terms $x=y$, using $\|\delta_{n,x}\|_H^2=c$ and nonnegativity of every summand, while the second follows from Cauchy--Schwarz. It follows that
\[
12c^2\sum_{x\in I_m}b_x
\le\sqrt{6D}\,\|f_{n,m}\|_{L^2(\mu_C)}.
\]
Together with \eqref{eq:rotated-partition} and \eqref{eq:rotated-jensen}, this proves the finite-cutoff estimate in \eqref{eq:spatial-divergence-bound}.

Along an admissible diagonal, the interacting dimension $D_r$
tends to infinity because the refining dyadic grids contain
increasingly many points in an open ball on which $g$ is positive.
Meanwhile, \eqref{eq:diagonalconv} gives $f_r\to V$ in
$L^2(\mu_C)$, so $\|f_r\|_2\le M$ for some $M<\infty$.
Applying the finite-cutoff estimate therefore yields
\[
D_r-\sqrt{6D_r}\,\|f_r\|_2
\ge D_r-\sqrt6\,M\sqrt{D_r}\longrightarrow+\infty.
\]
The definition \eqref{eq:admdomain} now excludes $\mathbf p^{\mathfrak m}(1)$ from $\mathcal A_{\mathrm{adm}}$.
\end{proof}

The divergence follows from $D_r\to\infty$ and boundedness of $\|f_r\|_2$, so the same conclusion holds for the diagonals preserving iterated relations in Section~\ref{sec:sequences}. The same bound also gives divergence at fixed ultraviolet regularization. For fixed $n$, \eqref{eq:innerconv} implies that $\|f_{n,m}\|_{L^2(\mu_C)}$ remains bounded as $m\to\infty$, while the number of interacting directions tends to infinity when $g\not\equiv0$. Thus the transformed partition periods already diverge as the spatial discretization is refined, before taking the ultraviolet limit.

Theorem~\ref{thm:spatial-divergence} also rules out an ordinary coaction on $B_k$ compatible with the componentwise finite coactions. Such a coaction would make $B_{\C}$ stable under $G$. Since every element of $B_{\C}$ has convergent physical periods, this would imply $B_{\C}\subseteq\mathcal A_{\mathrm{adm}}$,
contradicting the theorem.

\section{Polynomial representatives and comparison maps}\label{sec:transport}
We now examine whether keeping the same polynomial representative defines a comparison map between the de Rham realizations of different cutoff models. Such a map is well defined only if every twisted-exact relation for the source interaction remains twisted-exact for the target interaction. We show that this condition fails whenever the interactions differ by a nonconstant polynomial, and apply the result to spatial refinement.

Work in a common finite Gaussian space $K$ of dimension $d$, with orthonormal basis $(e_1,\ldots,e_d)$, coordinates $z_i=X(e_i)$ and standard Gaussian measure $\gamma$. If the original spaces differ, Proposition~\ref{prop:spectators} permits their enlargement by free Gaussian spectators. For $f\in k[z]$, put $S_f=|z|^2/2+f$ and denote by $N_f$ the normalized motive of Section~\ref{subsec:normalized} for this action. Its polynomial-class map is
\[
a_f:k[z]\longrightarrow\omega_{\dR}(N_f),\qquad
a_f(Q)=[Q\,dz]_{S_f}\otimes\eta^\vee.
\]
Here $dz=dz_1\wedge\cdots\wedge dz_d$, the bracket denotes the class for the differential $d-dS_f\wedge$, and $\eta$ is the Gaussian volume-form frame used in that normalization.

For $h=\sum_i h_i e_i\in K$, define the directional derivative and its associated linear polynomial by
\[
D_hQ=\sum_i h_i\partial_{z_i}Q,\qquad
\ell_h(z)=\sum_i h_i z_i=X(h).
\]
The direction is defined over $k$ when every $h_i$ lies in $k$. For such a direction, $D_hS_f=\ell_h+D_hf$, and we define
\[
B_{f,h}Q=D_hQ-(\ell_h+D_hf)Q.
\]
By Lemma~\ref{lem:top}, $a_f(B_{f,h}Q)=0$ because this is the twisted derivative dictated by $\nabla_{S_f}=d-dS_f\wedge$.

The rule $a_f(Q)\mapsto a_{\widetilde f}(Q)$ descends to classes exactly when $\ker a_f\subseteq\ker a_{\widetilde f}$, and the following argument disproves this inclusion by exhibiting a polynomial that is exact for $f$ but has a positive period for $\widetilde f$.

\begin{proposition}\label{thm:stricttransport}
Let $f,\widetilde f\in k[z]$ be real polynomial interactions of degree at most four. Suppose both total actions are bounded below by $c|z|^2-C$ on $\R^d$, for some $c>0$ and $C\ge0$. Choose a direction $h$ defined over $k$ with $D_h(\widetilde f-f)\ne0$.

Then the rule $a_f(Q)\mapsto a_{\widetilde f}(Q)$ for $Q\in k[z]$ does not define a linear map between the two normalized de Rham spaces. More precisely, set $R=D_h(\widetilde f-f)$ and $P=B_{f,h}R$. This polynomial satisfies
\begin{equation}\label{eq:positivewitness}
\begin{gathered}
a_f(P)=0,\qquad a_{\widetilde f}(P)=a_{\widetilde f}(R^2)\ne0,\\
\per[N_{\widetilde f},\beta_{\widetilde f},a_{\widetilde f}(P)]
=\int R^2e^{-\widetilde f}\,d\gamma>0.
\end{gathered}
\end{equation}
The polynomial $P$ has degree at most six and is even when both interactions are even.
\end{proposition}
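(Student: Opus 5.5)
The plan is a direct computation with the twisted derivatives of Lemma~\ref{lem:top}, followed by a numerical evaluation through \eqref{eq:finiteperiod}. The first point is that $a_f(P)=0$. This holds by construction, since $P=B_{f,h}R$ is a twisted derivative for $S_f$. Indeed, $B_{f,h}=\sum_i h_iB_{f,i}$ is a $k$-linear combination of the operators in \eqref{eq:twistedkernel}, and $R=D_h(\widetilde f-f)\in k[z]$ because $h$ is defined over $k$.

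Next I would compare the two twisted derivatives. Since $D_hS_{\widetilde f}-D_hS_f=D_h(\widetilde f-f)=R$, we get
\[
B_{f,h}Q=B_{\widetilde f,h}Q+RQ
\]
for every polynomial $Q$. Taking $Q=R$ gives $P=B_{\widetilde f,h}R+R^2$. The first term lies in $\ker a_{\widetilde f}$ by Lemma~\ref{lem:top}, so $a_{\widetilde f}(P)=a_{\widetilde f}(R^2)$.

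For the period, the coercivity hypothesis on $S_{\widetilde f}$ makes the real cycle a rapid-decay class, as in Section~\ref{subsec:normalized}. Then \eqref{eq:finiteperiod} gives
\[
\per[N_{\widetilde f},\beta_{\widetilde f},a_{\widetilde f}(P)]=\int Pe^{-\widetilde f}\,d\gamma.
\]
I would compute this directly. The integral of $B_{\widetilde f,h}R$ against $e^{-\widetilde f}\,d\gamma$ equals $(2\pi)^{-d/2}\int D_h(Re^{-S_{\widetilde f}})\,dz$, which vanishes by integration by parts; the boundary terms vanish because $S_{\widetilde f}\ge c|z|^2-C$. What remains is $\int R^2e^{-\widetilde f}\,d\gamma$. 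This is strictly positive, because $R$ is a nonzero real polynomial, so $R^2>0$ off a null set, and $e^{-\widetilde f}>0$. Positivity forces $a_{\widetilde f}(R^2)\ne0$, since a zero class would have zero period. Consequently $P\in\ker a_f\setminus\ker a_{\widetilde f}$, and the rule $a_f(Q)\mapsto a_{\widetilde f}(Q)$ is not well defined: it would send $0=a_f(P)$ both to $0=a_{\widetilde f}(0)$ and to the nonzero vector $a_{\widetilde f}(P)$.

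The remaining claims are degree and parity bookkeeping. Since $\deg(\widetilde f-f)\le4$, we have $\deg R\le3$. In $P=D_hR-(\ell_h+D_hf)R$, the factor $\ell_h+D_hf$ has degree at most $3$, so $\deg P\le6$. If both interactions are even, then $\widetilde f-f$ is even, so $R$ is odd. Also $\ell_h+D_hf$ is odd and $D_hR$ is even, hence $P$ is even.

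I expect no real obstacle. The only delicate point is that the relative covector argument needs the real cycle to be admissible, and this is exactly what the hypothesis of a lower bound $c|z|^2-C$ provides. Real coefficients of $R$ are used for the positivity of $R^2$.
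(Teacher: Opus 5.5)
Your proposal is correct and follows the paper's proof essentially step by step: the operator identity $B_{f,h}Q=B_{\widetilde f,h}Q+RQ$ at $Q=R$, then evaluation through \eqref{eq:finiteperiod} together with positivity of $\int R^2e^{-\widetilde f}\,d\gamma$, then the same degree and parity bookkeeping. Your separate integration-by-parts check that $\int (B_{\widetilde f,h}R)\,e^{-\widetilde f}\,d\gamma=0$ is valid but redundant, since once $a_{\widetilde f}(P)=a_{\widetilde f}(R^2)$ the paper simply evaluates the class $a_{\widetilde f}(R^2)$ by the period identity.
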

\begin{proof}
Subtracting the two twisted derivative operators gives the polynomial identity
\[
B_{f,h}Q=B_{\widetilde f,h}Q+(D_h\widetilde f-D_hf)Q.
\]
The first term on the right is exact for $\widetilde f$, so in its de Rham quotient
\begin{equation}\label{eq:defect}
a_{\widetilde f}(B_{f,h}Q)
=a_{\widetilde f}\bigl((D_h\widetilde f-D_hf)Q\bigr).
\end{equation}
With $Q=R=D_h(\widetilde f-f)$, the polynomial $P=B_{f,h}R$
is exact for $f$, whereas \eqref{eq:defect} gives
$a_{\widetilde f}(P)=a_{\widetilde f}(R^2)$. The finite period identity~\eqref{eq:finiteperiod} therefore evaluates this latter class as $\int R^2e^{-\widetilde f}\,d\gamma$. The quadratic lower bound on the total action makes the integral finite, while the nonzero real polynomial $R$ has a square that is positive on a nonempty open set. Since the weighted Gaussian density is positive everywhere, the integral is strictly positive and $a_{\widetilde f}(P)\ne0$. The proposed rule would send the zero vector $a_f(P)$ to this nonzero vector and therefore cannot define a linear map.

Finally $\deg R\le3$, while $B_{f,h}$ differentiates once and multiplies by polynomials of degree at most three. Thus $\deg P\le6$. If $f$ and $\widetilde f$ are even, $R$ is odd, so each of $D_hR$, $\ell_hR$, and $(D_hf)R$ is even. Their signed sum $P$ is even as well.
\end{proof}
Comparisons defined by other representatives or geometric correspondences remain possible.

\begin{corollary}
	\label{cor:spatialobstruction}
	Assume $g\not\equiv0$. For every fixed $n$ and all sufficiently
	large $m$, write $f_{n,m}$ and $f_{n,m+1}$ in common standard
	Gaussian coordinates $z$, adding independent Gaussian
	coordinates when needed. There exists an even polynomial $P\in k[z]$ of degree at most
	six such that
	\[
	a_{f_{n,m}}(P)=0,
	\qquad
	a_{f_{n,m+1}}(P)\ne0.
	\]
	Consequently, the assignment $a_{f_{n,m}}(Q)\mapsto a_{f_{n,m+1}}(Q)$ does not define a linear map between the normalized de Rham spaces.
\end{corollary}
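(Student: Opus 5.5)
The plan is to apply Proposition~\ref{thm:stricttransport} with $f=f_{n,m}$ and $\widetilde f=f_{n,m+1}$, both written in a common Gaussian space. The remaining work is to check its hypotheses. The common space is $K=H_{n,m}+H_{n,m+1}\in\mathcal K$, and Proposition~\ref{prop:spectators} supplies the spectator enlargement. By Lemma~\ref{lem:commonfield}, the coordinates on $K$ can be chosen over $k$ with $f,\widetilde f\in k[z]$, and both polynomials are real of degree at most four. The coercive bound of Section~\ref{subsec:field} gives $S_f,S_{\widetilde f}\ge\tfrac12|z|^2-C$. Both interactions are sums of even Wick quartics in linear forms, so they are even. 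Proposition~\ref{thm:stricttransport} then yields an even $P$ of degree at most six, once we exhibit a direction $h$ defined over $k$ with $D_h(\widetilde f-f)\ne0$.

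It suffices to show that $\widetilde f-f$ is a nonconstant polynomial. A nonconstant polynomial has some nonzero partial derivative $\partial_{z_i}$, so we may take $h=e_i$, which is defined over $k$. Write
\[
\widetilde f-f=\lambda\sum_{x}\bigl(2^{-2(m+1)}g(x)\mathbf 1_{\Lambda_{m+1}}(x)-2^{-2m}g(x)\mathbf 1_{\Lambda_m}(x)\bigr):\ell_x^4:,
\]
where the sum runs over $x\in\Lambda_{m+1}$ (note $\Lambda_m\subset\Lambda_{m+1}$). Take $m$ large enough that $\operatorname{supp}g$ lies in the ball of radius $m$. Then every $x\in 2^{-m-1}\mathbb Z^2\setminus2^{-m}\mathbb Z^2$ with $g(x)>0$ appears only in $\widetilde f$, with coefficient $\lambda 2^{-2m-2}g(x)>0$. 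Such points exist for large $m$, because $g$ is positive on an open ball. Hence the coefficient vector $(\beta_x)$ is nonzero.

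The main obstacle is proving that a nonzero combination $\sum_x\beta_x:\ell_x^4:$ is nonconstant. The degree-four homogeneous part is $\sum_x\beta_x\ell_x^4$, and the $\ell_x$ are linearly independent by the Fourier argument of Theorem~\ref{thm:spatial-divergence}, Step~1. Fourth powers of linearly independent linear forms are linearly independent in $\Sym^4$: complete the forms to a coordinate system, and each $\ell_x^4$ then contributes a distinct monomial. So the quartic part is nonzero.

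A more robust route avoids coordinates. The difference lies in $\mathcal H_4$ and equals $I_4\bigl(\sum_x\beta_x\delta_{n,x}^{\otimes4}\bigr)$. Linear independence of the $\delta_{n,x}$ in $H$ gives linear independence of their fourth tensor powers, so the $L^2$ norm is nonzero. A nonzero element of $\mathcal H_4$ is orthogonal to constants, hence is not constant. Either argument gives $D_h(\widetilde f-f)\ne0$ for some $h$, and Proposition~\ref{thm:stricttransport} completes the proof, including the failure of linearity of the assignment.
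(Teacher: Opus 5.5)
Your proposal is correct and follows the paper's proof. Both apply Proposition~\ref{thm:stricttransport}, and both obtain nonconstancy of $f_{n,m+1}-f_{n,m}$ from a nonzero $t_x^4$ coefficient in site coordinates, using the linear independence established in Step~1 of Theorem~\ref{thm:spatial-divergence}. The differences are minor: the paper uses a site $x\in\Lambda_m$ with $g(x)>0$, whose weight changes by $-\tfrac34\lambda2^{-2m}g(x)$, whereas you use a newly added site; you also offer a chaos-norm alternative, and you explicitly verify that the interactions are even, which the paper leaves implicit when it invokes the even case of the proposition.
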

\begin{proof}
The Fourier argument in Step~1 of the proof of
Theorem~\ref{thm:spatial-divergence} applies to any finite set of
distinct sites at fixed ultraviolet index $n$. Thus the vectors
$\delta_{n,x}$ are linearly independent, and the site variables
form nondegenerate linear coordinates on their span, although their
Gaussian covariance is not diagonal.

Since $g$ is positive on a nonempty open set $U$, density of
dyadic points provides $x\in U\cap\Lambda_M$ for some $M$.
Nestedness of the grids then ensures that $x\in\Lambda_m$
for all sufficiently large $m$.

Embed the levels $m$ and $m+1$ in a common finite Gaussian space.  In the corresponding site coordinate $t_x=X(\delta_{n,x})$, the coefficient of $t_x^4$ in the difference $F_m=f_{n,m+1}-f_{n,m}$ is
\[
\lambda g(x)\left(2^{-2(m+1)}-2^{-2m}\right)
=-\frac34\lambda2^{-2m}g(x)\ne0.
\]
Since no lower-degree Wick term can cancel this quartic coefficient, $F_m$ is a nonconstant polynomial.

All coefficients of $F_m$ lie in the fixed field $k$. Expressing $F_m$ in any coordinate basis defined over $k$, nonconstancy implies that at least one partial derivative is a nonzero polynomial. Choosing the corresponding coordinate direction $h$ gives $D_hF_m\ne0$. This direction is defined over $k$, and orthonormality is unnecessary for the algebraic obstruction.

For every Wick quartic,
\[
t^4-6ct^2+3c^2=(t^2-3c)^2-6c^2\ge-6c^2.
\]
Since all Riemann-sum coefficients $\lambda2^{-2m}g(x)$ are nonnegative, each finite total action in standard Gaussian coordinates satisfies
\[
S_{n,m}(z)=\frac12|z|^2+f_{n,m}(z)
\ge\frac12|z|^2-B_{n,m}
\]
for a finite constant $B_{n,m}$.  Spectator enlargement adds only terms $|y|^2/2$, so the same form of lower bound holds in the common comparison space.  All hypotheses of Proposition~\ref{thm:stricttransport}
are therefore satisfied. Applying it to $(f_{n,m},f_{n,m+1})$ gives the polynomial $P$ and the failure of the stated assignment.
\end{proof}

\section{Saturation, physical \texorpdfstring{$L^1$}{L1}-continuity, and Galois rigidity}\label{sec:rigidity}
Theorem~\ref{thm:spatial-divergence} leaves open which individual transformations have convergent fixed-observable periods. For a fixed $\sigma\in G_{\dR}(\C)$, we ask whether its transformed finite-cutoff periods define a single bounded functional of the realized $L^1(\mu_C)$ integrand, allowing cutoff-dependent insertions and cancellations between levels. This condition, called physical $L^1$-continuity, implies convergence for every fixed observable along an admissible diagonal.

\subsection{A finite-dimensional uniqueness theorem}
For the density $e^{-S}dx$, the score operators of Stein's method are $\partial_i-\partial_iS$. We adapt the multivariate density characterization of Mijoule, Rai\v{c}, Reinert, and Swan \cite[Prop.~3.14 and Def.~3.17]{Stein} to complex $L^p$-bounded functionals tested only on polynomials. Gaussian tails let us pass from those tests to a distributional equation.

\begin{theorem}[Continuous twisted functionals]\label{thm:finiterigidity}
Let $S\in\R[x_1,\ldots,x_d]$ satisfy $S(x)\ge a|x|^2-b$, $a>0$, and put
\[
Z_S=\int_{\R^d}e^{-S(x)}\,dx,\qquad d\nu=Z_S^{-1}e^{-S}dx.
\]
Suppose $1\le p<\infty$ and a complex-linear functional $\ell$ on $\C[x_1,\ldots,x_d]$ satisfies
\[
|\ell(P)|\le C\norm P_{L^p(\nu)},\qquad
\ell(\partial_iP-(\partial_iS)P)=0
\]
for all polynomials $P$ and all $i$. Then there is $c\in\C$ such that
$\ell(P)=c\int P\,d\nu$ for every $P$. Moreover, $\ell$ has a unique continuous extension to $L^p(\nu)$, given by
\[
\widetilde\ell(F)=c\int F\,d\nu\qquad(F\in L^p(\nu)).
\]
\end{theorem}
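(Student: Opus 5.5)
The plan is to represent $\ell$ by a density, convert the twisted-exact relations into a distributional equation for that density, and conclude that the density is constant.

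\emph{Step 1: representation by a density.} By the coercivity hypothesis, $e^{-S}\le e^{b}e^{-a|x|^2}$, so $\nu$ has Gaussian tails and hence finite exponential moments. Polynomials are therefore dense in $L^p(\nu)$. The bound $|\ell(P)|\le C\norm P_{L^p(\nu)}$ then gives a unique continuous extension $\widetilde\ell$ to $L^p(\nu)$. Since $1\le p<\infty$, the Riesz representation theorem yields $g\in L^{p'}(\nu)$ with $\widetilde\ell(F)=\int Fg\,d\nu$. This already settles the uniqueness part of the extension statement; it remains to show that $g$ is almost everywhere equal to a constant $c$.

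\emph{Step 2: the relations as a distributional identity.} Use the identity $\partial_iP-(\partial_iS)P=e^{S}\partial_i(Pe^{-S})$. Then the hypothesis says
\[
\int_{\R^d}\partial_i(Pe^{-S})\,g\,dx=0
\]
for every polynomial $P$ and every $i$. If this held with $Pe^{-S}$ replaced by an arbitrary $\varphi\in C_c^\infty(\R^d)$, then $\partial_i g=0$ in $\mathcal D'(\R^d)$ for all $i$. Since $g\in L^1_{\mathrm{loc}}$, this forces $g=c$ almost everywhere. Fix such a $\varphi$ and set $\psi=\varphi e^{S}\in C_c^\infty$. It suffices to find polynomials $P_n$ with
\[
\norm{P_n-\psi}_{L^p(\nu)}\to0,\qquad \norm{\partial_iP_n-\partial_i\psi}_{L^p(\nu)}\to0,\qquad \norm{(\partial_iS)(P_n-\psi)}_{L^p(\nu)}\to0.
\]
Hölder's inequality against $g\in L^{p'}(\nu)$ then passes the vanishing relation to the limit, because $\partial_i(\psi e^{-S})e^{S}=\partial_i\psi-(\partial_iS)\psi$.

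\emph{Step 3: polynomial approximation in a weighted Sobolev norm (main obstacle).} Since $\partial_iS$ is a polynomial and $e^{-S}\le e^{b}e^{-a|x|^2}$, Cauchy--Schwarz reduces all three conditions to approximation of $\psi$ and $\partial_i\psi$ in $L^{q}(\gamma)$ for some large even $q$. Here $\gamma$ is the centered Gaussian of covariance $(4a)^{-1}I$, obtained by splitting $e^{-a|x|^2}$ into two factors, one of which absorbs the polynomial weight. I would proceed in the Wiener chaos of $\gamma$, as in Section~\ref{subsec:gaussian-toolbox}:
\begin{itemize}[nosep]
\item Let $P_t$ be the Ornstein--Uhlenbeck (Mehler) semigroup for $\gamma$.
\item Since $\psi$ is smooth and compactly supported, $P_t\psi\to\psi$ and $\partial_iP_t\psi=e^{-t}P_t\partial_i\psi\to\partial_i\psi$ in $L^q(\gamma)$ as $t\to0$.
\item For fixed $t>0$, the chaos components of $P_t\psi$ are $e^{-jt}\psi_j$. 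Hypercontractivity \eqref{eq:hypercontractive} bounds the $L^q$ norm of the $j$th term by $(q-1)^{j/2}e^{-jt}\norm{\psi_j}_2$.
\item The same bound holds for $\partial_iP_t\psi$, since derivatives lower the chaos degree.
\end{itemize}
Consequently the chaos truncations of $P_t\psi$ converge in $L^q$ together with their first derivatives, at least for $t$ larger than $\tfrac12\log(q-1)$. For small $t$ I would use the semigroup property to write $P_t=P_{t/2}P_{t/2}$, iterate, or else simply choose $q$ and the time scale compatibly; making this step uniform is where I expect the real work to lie. The truncations are polynomials, and a diagonal choice gives the required $P_n$.

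With Step 3 in hand, Step 2 shows that $g=c$, hence $\ell(P)=c\int P\,d\nu$ and $\widetilde\ell(F)=c\int F\,d\nu$.
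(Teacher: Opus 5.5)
Steps~1 and~2 are sound. The density of polynomials in $L^p(\nu)$ that you invoke is true by the standard Fourier--Laplace argument, although the paper deliberately avoids presupposing it. Step~3, however, is a genuine gap, and the mechanism you sketch cannot close it. The hypercontractive bound $\|e^{-jt}J_j\psi\|_q\le (q-1)^{j/2}e^{-jt}\|J_j\psi\|_2$ is summable in $j$ only when $e^{-t}\sqrt{q-1}<1$, whereas you need $t\to0$ for $P_t\psi$ to approximate $\psi$.

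Writing $P_t=P_{t/2}P_{t/2}$ changes nothing, since the chaos multipliers remain $e^{-jt}$. For nonzero $\psi\in C_c^\infty$, the norms $\|J_j\psi\|_2$ decay faster than any power of $j$ but not exponentially: exponential decay would put $\psi$ in the range of some $P_s$ with $s>0$, making it real-analytic. So for $q>2$ the bound diverges for every $t<\tfrac12\log(q-1)$, and no choice of $q$ and time scale rescues it.

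Your argument therefore closes only when $q\le2$, and then no semigroup is needed. The plain truncations $\Pi_{\le N}\psi$ converge in the Gaussian Sobolev norm because $\partial_i\Pi_{\le N}\psi=\Pi_{\le N-1}\partial_i\psi$.

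Separately, splitting $e^{-a|x|^2}$ into two equal factors gives a Gaussian of covariance $a^{-1}I$, not $(4a)^{-1}I$. The hypothesis $e^{-S}\le e^{b}e^{-a|x|^2}$ does not let you dominate $\nu$ by that more concentrated Gaussian. With the correct splitting, the weight $|\partial_iS|^p$ is absorbed pointwise and $q=p$ suffices. That settles $p\le2$, but the theorem also claims $p>2$, which remains unproved.

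The missing idea, and the one the paper uses, is to test the relations on characters rather than on compactly supported functions. Take $P_N$ to be the Taylor polynomials of $e^{it\cdot x}$, so that $|P_N|\le e^{|t||x|}$ and $|\partial_iP_N|\le|t_i|e^{|t||x|}$. Gaussian tails give $\int|g\,Q|\,e^{R|x|}\,d\nu<\infty$ for every polynomial $Q$ and every $R>0$. Dominated convergence then turns $\ell(\partial_iP_N-(\partial_iS)P_N)=0$ into $\int g\,(it_i-\partial_iS)\,e^{it\cdot x}\,d\nu=0$ for all $t\in\R^d$. Set $v=Z_S^{-1}ge^{-S}$, so that $v$ and $(\partial_iS)v$ lie in $L^1(dx)$. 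The identity says the Fourier transform of $\partial_iv+(\partial_iS)v$ vanishes, hence $\partial_i(e^Sv)=0$ in $\mathcal D'(\R^d)$ and $g$ is constant. This needs no weighted Sobolev density, and not even density of polynomials in $L^p(\nu)$.

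If you prefer to keep your route, replace the chaos truncations by $P_tQ_n$, where the $Q_n$ are polynomials converging to $\psi$ in $L^q(\gamma)$; this is the density you already use in Step~1. The operator $P_t$ maps polynomials to polynomials. After rescaling to the standard Gaussian, Mehler's gradient formula $\nabla P_tF(x)=\frac{e^{-t}}{\sqrt{1-e^{-2t}}}\int F\bigl(e^{-t}x+\sqrt{1-e^{-2t}}\,y\bigr)\,y\,\gamma(dy)$ and H\"older give $\|\nabla P_tF\|_q\le C_t\|F\|_q$ for $q>1$. Hence $P_tQ_n\to P_t\psi$ in $W^{1,q}(\gamma)$ at each fixed $t>0$, and a diagonal choice with $t\to0$ completes Step~3.
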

\begin{proof}
\medskip\noindent\textit{Step 1. Representation and integrability.}
The bound
\[
|\ell(P)|\le C\|P\|_{L^p(\nu)}
\]
allows Hahn--Banach to extend $\ell$ to $L^p(\nu)$ with the same
norm bound. By $L^p$ duality, any such extension
$\widetilde\ell$ is represented by a function $u\in L^{p'}(\nu)$,
where $p'=\infty$ when $p=1$, through
\[
\widetilde\ell(F)=\int uF\,d\nu
\]
for every $F\in L^p(\nu)$, with $\widetilde\ell$ agreeing with
$\ell$ on polynomials.

The Gaussian tails supplied by $S(x)\ge a|x|^2-b$ imply that,
for every polynomial $Q$ and every $R>0$,
\begin{equation}\label{eq:gaussian-exp-moment}
\int |Q(x)|^p e^{pR|x|}\,d\nu(x)<\infty.
\end{equation}
If $p>1$, H\"older gives
\[
\int|uQ|e^{R|x|}\,d\nu
\le\|u\|_{p'}
\left(\int|Q|^pe^{pR|x|}\,d\nu\right)^{1/p}<\infty.
\]
For $p=1$, use $\|u\|_\infty$ instead.  We shall use the resulting estimate in the form
\begin{equation}\label{eq:expdom}
\int |uQ|e^{R|x|}\,d\nu<\infty
\end{equation}
for every polynomial $Q$ and every $R>0$.

\medskip\noindent\textit{Step 2. The distributional equation.}
To pass from polynomial tests to Fourier tests, we approximate
$e^{it\cdot x}$, for a fixed $t\in\R^d$, by its Taylor polynomials
\[
P_N(x)=\sum_{j=0}^N\frac{(it\cdot x)^j}{j!}.
\]
The hypothesis gives, for every coordinate $i$,
\begin{equation}\label{eq:taylor-twisted-test}
0=\ell(\partial_iP_N-(\partial_iS)P_N)
=\int u(\partial_iP_N-(\partial_iS)P_N)\,d\nu.
\end{equation}
We have the pointwise bounds
\[
|P_N(x)|\le e^{|t||x|},
\qquad
|\partial_iP_N(x)|\le |t_i|e^{|t||x|}.
\]
Using \eqref{eq:expdom} with $Q=1$ and $Q=\partial_iS$ gives an integrable dominating function for \eqref{eq:taylor-twisted-test}.  Dominated convergence therefore yields
\begin{equation}\label{eq:fourier-twisted-test}
\int u(x)(it_i-\partial_iS(x))e^{it\cdot x}\,d\nu(x)=0.
\end{equation}
Writing $v(x)=Z_S^{-1}u(x)e^{-S(x)}$, the same Gaussian-tail
estimate shows that both $v$ and $(\partial_iS)v$ belong to
$L^1(dx)$. With the Fourier convention
\[
\widehat v(t)=\int e^{it\cdot x}v(x)\,dx,
\]
we have $\widehat{\partial_i v}(t)=-it_i\widehat v(t)$. Multiplying \eqref{eq:fourier-twisted-test} by $-1$ therefore gives
\[
\widehat{\partial_iv+(\partial_iS)v}(t)=0.
\]
Both terms define tempered distributions, so Fourier injectivity gives
\begin{equation}\label{eq:distributional-twisted-equation}
\partial_iv+(\partial_iS)v=0
\quad\text{in }\mathcal D'(\R^d)
\end{equation}
for every $i$.

\medskip\noindent\textit{Step 3. Identification of the functional.}
On any relatively compact open set, we may multiply the distributional
identity \eqref{eq:distributional-twisted-equation} by the smooth
function $e^S$, and the product rule gives
\[
\partial_i(e^Sv)
=e^S(\partial_iv+(\partial_iS)v)=0.
\]
Thus all first distributional derivatives of $e^Sv$ vanish. A distribution on connected $\R^d$ with zero gradient is constant, since convolution on each ball with a compactly supported approximate identity is smooth and has zero gradient on a smaller ball, hence is constant there. Constants agree on overlapping balls, and passage to the distributional limit shows $e^Sv=C_0$
for some $C_0\in\C$.  Since $v=Z_S^{-1}ue^{-S}$, $u=Z_SC_0$ is constant almost everywhere.  Therefore
\[
\ell(P)=c\int P\,d\nu
\]
with $c=Z_SC_0$. Since the representing density $u$ is constant, the same identity holds for every $F\in L^p(\nu)$, giving
\[
\widetilde\ell(F)=c\int F\,d\nu.
\]
Evaluating at $1$ gives $c=\ell(1)$, which is independent of the chosen extension. Since the argument applies to every continuous extension, they all coincide, proving the theorem without presupposing polynomial density.
\end{proof}

\begin{corollary}[Finite saturation]\label{cor:finitesaturation}
With $D_S=\sum_i(\partial_i-\partial_iS)\C[x]$, one has
\[
\overline{D_S}^{\,L^p(\nu)}=\ker\left(F\mapsto\int F\,d\nu\right).
\]
In standard Gaussian coordinates, write $S=|x|^2/2+f$ and $w_f=e^{-f}$. Let $a_{f,\C}$ be the complex polynomial-class map for this action, with the Gaussian normalization of Section~\ref{subsec:normalized}. Then
\begin{equation}\label{eq:weightedclosure}
\overline{w_f\ker a_{f,\C}}^{\,L^1(\gamma_d)}=\ker\left(F\mapsto\int F\,d\gamma_d\right).
\end{equation}
\end{corollary}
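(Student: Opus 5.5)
The first identity follows from Theorem~\ref{thm:finiterigidity} by Hahn--Banach duality. Put $E=\ker(F\mapsto\int F\,d\nu)$, a closed hyperplane in $L^p(\nu)$. Every element of $D_S$ integrates to zero against $\nu$: for a polynomial $P$, integrate $\partial_i(Pe^{-S})$ over $\R^d$; the boundary terms vanish because of the Gaussian tail bound $S\ge a|x|^2-b$. Hence $\overline{D_S}\subseteq E$.

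For the reverse inclusion, suppose $\overline{D_S}\ne E$. Hahn--Banach then gives a continuous functional $\Lambda$ on $L^p(\nu)$ that vanishes on $D_S$ but not on all of $E$. Its restriction to polynomials satisfies the hypotheses of Theorem~\ref{thm:finiterigidity}, with $C=\|\Lambda\|$. The theorem, including its uniqueness of continuous extension, gives $\Lambda(F)=c\int F\,d\nu$ on all of $L^p(\nu)$. This vanishes on $E$, a contradiction. The point to check here is that the theorem identifies $\Lambda$ itself, not just its restriction to polynomials. Its last clause says every continuous extension equals $c\int\cdot\,d\nu$, so this needs no density of polynomials.

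For \eqref{eq:weightedclosure}, take $p=1$ and $S=|x|^2/2+f$. Then $d\nu=Z^{-1}w_f\,d\gamma_d$ with $Z=\int w_f\,d\gamma_d$, after absorbing $(2\pi)^{-d/2}$. By Lemma~\ref{lem:top}, extended to complex coefficients, $\ker a_{f,\C}=D_S$; tensoring with the Gaussian dual frame does not change the kernel. Multiplication $M:F\mapsto Z^{-1}w_fF$ is an isometric isomorphism from $L^1(\nu)$ onto $L^1(\gamma_d)$. Indeed, $\int|Z^{-1}w_fF|\,d\gamma_d=\int|F|\,d\nu$, and $w_f>0$ everywhere with $w_f^{-1}$ finite pointwise, so $M$ is surjective: $G\mapsto Zw_f^{-1}G$ inverts it. The map $M$ sends $E$ onto the mean-zero subspace of $L^1(\gamma_d)$, since $\int MF\,d\gamma_d=\int F\,d\nu$. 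It also sends $D_S$ onto $w_fD_S$, the constant $Z^{-1}$ being harmless for linear spans. Because $M$ is an isometric isomorphism, closures are preserved, which gives \eqref{eq:weightedclosure}.

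The main subtlety is the hypothesis check in Theorem~\ref{thm:finiterigidity}: the action must be bounded below by $a|x|^2-b$. In the statement $f$ is a general polynomial, so this should be read as the standing coercivity hypothesis under which the physical integrals are defined. For the quartic approximations it holds by the bound $S_a\ge\frac12|z|^2-B_a$ of Section~\ref{subsec:field}.
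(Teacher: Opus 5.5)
Your proposal is correct and follows the paper's proof essentially step for step. The easy inclusion comes from integration by parts, the reverse from Hahn--Banach together with the uniqueness-of-extension clause of Theorem~\ref{thm:finiterigidity}, and the weighted version from transfer through the multiplication map using Lemma~\ref{lem:top}. Your normalization of that map by $Z^{-1}$ to make it isometric is only a cosmetic difference from the paper's $F\mapsto w_fF$, which is isometric up to the factor $Z_f$.
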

\begin{proof}
Let $B_iP=\partial_iP-(\partial_iS)P$.
Coercivity implies that $Pe^{-S}$ and $\partial_i(Pe^{-S})$ are integrable.  Fubini reduces the integral in the $x_i$-direction to a one-dimensional boundary term, and the Gaussian decay makes that boundary term zero.  Hence
\[
\int B_iP\,d\nu=0.
\]
Therefore
\[
D_S:=\sum_iB_i\C[x]
\subseteq\ker I_\nu,
\qquad I_\nu(F)=\int F\,d\nu.
\]
By continuity of $I_\nu$, also
\begin{equation}\label{eq:saturation-easy-inclusion}
\overline{D_S}^{\,L^p(\nu)}\subseteq\ker I_\nu.
\end{equation}

If the inclusion in \eqref{eq:saturation-easy-inclusion} were
strict, we could choose
\[
F_0\in\ker I_\nu\setminus\overline{D_S}.
\]
By Hahn--Banach there is a continuous linear functional $L$ on $L^p(\nu)$ such that
\[
L|_{\overline{D_S}}=0,
\qquad L(F_0)\ne0.
\]
The restriction of $L$ to polynomials is $L^p$-bounded and annihilates every twisted derivative. Theorem~\ref{thm:finiterigidity} identifies this restriction with a scalar multiple of integration and identifies its unique continuous extension to $L^p(\nu)$. Since $L$ is itself such an extension,
\[
L(F)=c\int F\,d\nu
\]
for all $F\in L^p(\nu)$.  Since $F_0$ has mean zero, this gives $L(F_0)=0$, contradiction.  Thus
\[
\overline{D_S}^{\,L^p(\nu)}=\ker I_\nu.
\]
Now write
\[
S(x)=\frac{|x|^2}{2}+f(x),
\qquad
w_f=e^{-f},
\qquad
Z_f=\int w_f\,d\gamma_d, \qquad d\nu=Z_f^{-1}w_f\,d\gamma_d.
\]
The multiplication map $M_{w_f}:L^1(\nu)\to L^1(\gamma_d)$, $F\mapsto w_fF$,
satisfies
\[
\|M_{w_f}F\|_{L^1(\gamma_d)}
=Z_f\|F\|_{L^1(\nu)}.
\]
Since $w_f>0$ almost everywhere, the inverse is $G\mapsto G/w_f$, making this map a Banach-space isomorphism up to the constant factor $Z_f$.  Moreover
\[
\int F\,d\nu=0
\iff
\int M_{w_f}F\,d\gamma_d=0.
\]
By Lemma~\ref{lem:top}, after complexification $\ker a_{f,\C}=D_S$. Applying the homeomorphism $M_{w_f}$ to the equality already proved for $p=1$ gives exactly
\[
\overline{w_f\ker a_{f,\C}}^{\,L^1(\gamma_d)}
=\ker\left(G\mapsto\int G\,d\gamma_d\right).
\]
This concludes the proof.
\end{proof}

\subsection{Cumulative realization and saturation}\label{subsec:cumulative}
Fix a tower of levels in $\mathcal T$ whose nested Gaussian spaces $K_1\subset K_2\subset\cdots$ generate the field sigma-algebra. We may take $K_m=H_{n,m}$ with $n$ fixed or a tower from Proposition~\ref{prop:diagonal}. Write $f_m$, $N_m$, and $A_m$ for the interaction, normalized motive, and polynomial algebra at the $m$th level. Let $\mu_m$ be its standard Gaussian marginal and put
\[
E_m=L^1(\mu_m;\C),\qquad E=L^1(\mu_C;\C),\qquad w_m=e^{-f_m}.
\]
The isometric embedding $J_m:E_m\to E$ views a function of the coordinates in $K_m$ as a function on the full field. With $a_{m,\C}$ denoting the complexified polynomial-class map at level $m$, define
\[
\mathcal R_{\mathrm{phys}}
=\Span_{\C}\bigcup_{m\ge1}J_m\bigl(w_m\ker a_{m,\C}\bigr)
\subseteq E.
\]
These finite sums may combine relations from different cutoffs and have zero integral by \eqref{eq:finiteperiod}, and the theorem below shows that they are dense in the mean-zero subspace. To keep track simultaneously of their classes and their integrands, put
\begin{equation}\label{eq:cumulative-data}
M_\infty=\bigoplus_{i\ge1}N_i\quad\text{in }\Ind(\Cat),
\qquad
D_\infty=\bigoplus_{i\ge1}A_i.
\end{equation}
In the ind-category, this direct sum is the filtered colimit of the partial sums $\bigoplus_{i\le m}N_i$ under inclusion. No maps $N_i\to N_{i+1}$ are needed because $M_\infty$ keeps the levels separate rather than identifying a motive of their constructive limit.

The realization functors and comparison extend term by term.
\[
\begin{gathered}
\omega_\B(M_\infty)=\bigoplus_i\omega_\B(N_i),\qquad
\omega_{\dR}(M_\infty)=\bigoplus_i\omega_{\dR}(N_i),\\
c_{M_\infty}=\bigoplus_i c_{N_i}.
\end{gathered}
\]
For the last equality, complexify over $k$ on the de Rham side and over $\Q$ on the Betti side. All vectors have finite support, unlike a general fixed-observable sequence, whose weighted representatives may converge in $E$ without making the sequence a vector of the direct sum.

Write $D_{\infty,\C}=D_\infty\otimes_k\C$. Define the class map and the integrand map by
\[
\begin{aligned}
\alpha_\infty&=\bigoplus_i a_i:D_\infty\longrightarrow\omega_{\dR}(M_\infty),\\
j_\infty((P_i))&=\sum_iJ_i(P_iw_i)\in E,
\end{aligned}
\]
with $j_\infty$ defined on $D_{\infty,\C}$. Each $a_i$ is surjective by Lemma~\ref{lem:top}, so $\alpha_\infty$ is surjective. Moreover,
\[
\ker\alpha_{\infty,\C}=\bigoplus_i\ker a_{i,\C},
\qquad
j_\infty(\ker\alpha_{\infty,\C})=\mathcal R_{\mathrm{phys}}.
\]
Put
\[
\beta_\infty((v_i))=\sum_i\beta_i(v_i),
\qquad
I_\infty(F)=\int F\,d\mu_C.
\]
The sum defining $\beta_\infty$ is finite on each vector of $\omega_\B(M_\infty)$. Summing the finite period identity~\eqref{eq:finiteperiod} gives the commutative diagram
\[
\begin{tikzcd}[column sep=2em,row sep=large]
D_{\infty,\C} \arrow[r,"\alpha_{\infty,\C}"] \arrow[d,"j_\infty"']
& \omega_{\dR}(M_\infty)_{\C} \arrow[r,"c_{M_\infty}"]
& \omega_{\B}(M_\infty)_{\C} \arrow[r,"\beta_{\infty,\C}"]
& \C \arrow[d,equal] \\
E \arrow[rrr,"I_\infty"'] &&& \C
\end{tikzcd}
\]
and hence
\begin{equation}\label{eq:presentationidentity}
I_\infty j_\infty
=\beta_{\infty,\C}c_{M_\infty}\alpha_{\infty,\C}.
\end{equation}

\begin{theorem}[Saturation of the cumulative presentation]\label{thm:saturation}
For the cumulative presentation of Section~\ref{subsec:cumulative},
\begin{equation}\label{eq:globalsaturation}
\overline{j_\infty(\ker\alpha_{\infty,\C})}^{\,E}
=\ker I_\infty.
\end{equation}
The quotient $E/\overline{j_\infty(\ker\alpha_{\infty,\C})}$ is isometrically isomorphic to $\C$ through integration.
\end{theorem}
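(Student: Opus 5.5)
The approach is to prove the two inclusions separately, with the hard inclusion handled by a Hahn--Banach argument. That argument reduces to Corollary~\ref{cor:finitesaturation} at each finite level, followed by a density argument.

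\emph{Easy inclusion.} By \eqref{eq:presentationidentity}, $I_\infty j_\infty=\beta_{\infty,\C}c_{M_\infty}\alpha_{\infty,\C}$ vanishes on $\ker\alpha_{\infty,\C}$. Since $I_\infty$ is continuous on $E$, the closure of $j_\infty(\ker\alpha_{\infty,\C})=\mathcal R_{\mathrm{phys}}$ lies in $\ker I_\infty$.

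\emph{Reverse inclusion.} Suppose it fails. Then Hahn--Banach gives a continuous functional $L\in E^*$ that vanishes on $\overline{\mathcal R_{\mathrm{phys}}}$ but not on $\ker I_\infty$. Fix a level $m$ and consider $L\circ J_m$ on $E_m=L^1(\mu_m)$. It is continuous, since $J_m$ is isometric, and it vanishes on $w_m\ker a_{m,\C}$. By \eqref{eq:weightedclosure}, the $L^1(\mu_m)$-closure of $w_m\ker a_{m,\C}$ is the mean-zero subspace of $E_m$. Hence $L\circ J_m$ vanishes on mean-zero functions of $E_m$, so
\[
L(J_mF)=c_m\int F\,d\mu_m,\qquad c_m=L(1).
\]
In particular $c_m=L(J_m1)=L(1)=:c$ is independent of $m$. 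Thus $L-cI_\infty$ vanishes on $\bigcup_mJ_m(E_m)$.

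\emph{Density step.} This is the main point to check: $\bigcup_mJ_m(E_m)$ is dense in $E$. This holds because the nested $\sigma(X(K_m))$ generate the field sigma-algebra, as in Proposition~\ref{hyp:constructive} or Proposition~\ref{prop:diagonal}. By $L^1$ martingale convergence, $\E[F\mid\sigma(X(K_m))]\to F$ in $L^1$ for every $F\in E$, and each conditional expectation lies in $J_m(E_m)$. Continuity then gives $L=cI_\infty$ on all of $E$. This contradicts $L\neq0$ on $\ker I_\infty$, which proves \eqref{eq:globalsaturation}.

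\emph{Quotient statement.} Since $\ker I_\infty$ is closed, $I_\infty$ induces a linear isomorphism $E/\ker I_\infty\to\C$. It remains to identify the quotient norm. For $F\in E$, $|I_\infty F|\le\|F+G\|_1$ for every mean-zero $G$. Choosing $G=I_\infty(F)\cdot1-F$ gives $\|F+G\|_1=|I_\infty F|$, because $\|1\|_1=1$. Hence the quotient norm of $[F]$ equals $|I_\infty F|$, and the isomorphism is isometric.
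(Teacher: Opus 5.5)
Your proof is correct. It uses the paper's two ingredients, the weighted finite saturation \eqref{eq:weightedclosure} and $L^1$ martingale convergence $\E[F\mid\sigma(X(K_m))]\to F$, together with the same quotient-norm computation, but argues dually through Hahn--Banach, which is why you need the check that the level scalars $c_m=L(1)$ agree; the paper instead argues directly, approximating a mean-zero $F$ by its mean-zero conditional expectation $F^{(m)}$, approximating $F^{(m)}$ by a single-level weighted exact relation $J_m(P_mw_m)$, and concluding by the triangle inequality.
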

\begin{proof}
If $d\in\ker\alpha_{\infty,\C}$, the presentation identity gives
\[
I_\infty j_\infty(d)
=\beta_{\infty,\C}c_{M_\infty}\alpha_{\infty,\C}(d)=0.
\]
Hence $j_\infty(\ker\alpha_{\infty,\C})\subseteq\ker I_\infty$. The kernel of the continuous functional $I_\infty$ is closed, so
\begin{equation}\label{eq:global-saturation-easy-inclusion}
\overline{j_\infty(\ker\alpha_{\infty,\C})}^{\,E}
\subseteq\ker I_\infty.
\end{equation}

For the reverse inclusion, consider a mean-zero
$F\in E=L^1(\mu_C)$ and its conditional expectations
$F^{(m)}=\E[F\mid\mathcal F_m]$, where
$\mathcal F_m=\sigma(X(K_m))$ is completed. Since these
sigma-algebras increase to the completed field sigma-algebra,
the upward convergence theorem
\cite[Chap.~9]{Kallenberg} gives
\begin{equation}\label{eq:conditional-l1-convergence}
F^{(m)}\to F\quad\text{in }L^1(\mu_C).
\end{equation}
Conditional expectation preserves the mean, so
\begin{equation}\label{eq:conditional-zero-mean}
\E[F^{(m)}]=0.
\end{equation}
The isometry $J_m:E_m\to E$ identifies $E_m$ with the $\mathcal F_m$-measurable $L^1$ subspace, so there is a unique $\widetilde F_m\in E_m$ with $J_m\widetilde F_m=F^{(m)}$, and \eqref{eq:conditional-zero-mean} gives it zero $\mu_m$-mean.

For a given $\varepsilon>0$, the weighted Gaussian form
\eqref{eq:weightedclosure} of
Corollary~\ref{cor:finitesaturation} provides a polynomial
$P_m$ such that
\begin{equation}\label{eq:finite-exact-approximation}
a_{m,\C}(P_m)=0,
\qquad
\|\widetilde F_m-P_mw_m\|_{E_m}<\varepsilon.
\end{equation}
Let $d_m\in D_{\infty,\C}$ have $P_m$ in its $m$th summand and zero in all other components, so that $\alpha_{\infty,\C}(d_m)=0$ and $j_\infty(d_m)=J_m(P_mw_m)$.
Since $J_m$ is isometric, \eqref{eq:finite-exact-approximation} becomes
\begin{equation}\label{eq:global-exact-approximation}
\|F^{(m)}-j_\infty(d_m)\|_E<\varepsilon.
\end{equation}
First choose $m$ so large that $\|F-F^{(m)}\|_1<\varepsilon$ by \eqref{eq:conditional-l1-convergence}, and then choose $P_m$ so that \eqref{eq:global-exact-approximation} holds.  The triangle inequality gives
\[
\|F-j_\infty(d_m)\|_1<2\varepsilon.
\]
Thus $F$ lies in the closure of $j_\infty(\ker\alpha_{\infty,\C})$.  Together with \eqref{eq:global-saturation-easy-inclusion}, this proves \eqref{eq:globalsaturation}.

Integration has operator norm one on $L^1(\mu_C)$ and satisfies
\[
|I_\infty(F)|\le\|F\|_1.
\]
By the equality of kernels just proved, it induces an injective map
\[
\overline I:E/\overline{j_\infty(\ker\alpha_{\infty,\C})}\to\C.
\]
For every $z\in\C$, the constant function $z$ has integral $z$
and $L^1$ norm $|z|$, proving surjectivity and giving the upper
bound $|z|$ for the quotient norm. The inequality
$|I_\infty(F)|\le\|F\|_1$ gives the reverse bound, so
$\overline I$ is an isometric isomorphism.
\end{proof}

Because the proof uses only finite saturation and generation of the field sigma-algebra, rather than convergence of $f_m$ or $w_m$, the theorem applies both at fixed ultraviolet regularization and along an admissible diagonal.

\paragraph{Continuous covectors and the Galois action.}
Let $\gamma$ be a complex-linear covector on $\omega_\B(M_\infty)_\C$ and suppose that
\begin{equation}\label{eq:globalbound}
|\gamma c_{M_\infty}\alpha_{\infty,\C}(d)|
\le C\norm{j_\infty(d)}_1\qquad(d\in D_{\infty,\C})
\end{equation}
for some $C<\infty$. The bound uses the norm of the \emph{sum} of the weighted integrands and must therefore respect cancellations between levels, so separate bounds at each cutoff do not suffice. It makes
\begin{equation}\label{eq:continuous-covector-functional}
L_0(j_\infty(d)):=\gamma c_{M_\infty}\alpha_{\infty,\C}(d)
\end{equation}
well defined and continuous on $j_\infty(D_{\infty,\C})$ by \eqref{eq:globalbound}. Hahn--Banach extends $L_0$ continuously to $E$. Every such extension $L$ annihilates $j_\infty(\ker\alpha_{\infty,\C})$, and hence, by Theorem~\ref{thm:saturation}, all of $\ker I_\infty$. Therefore
\begin{equation}\label{eq:continuous-functional-scalar}
L=cI_\infty
\end{equation}
for some $c\in\C$. Evaluating on $j_\infty(d)$ and using \eqref{eq:presentationidentity}, the surjectivity of $\alpha_\infty$, and the comparison isomorphism gives $\gamma=c\beta_{\infty,\C}$.

For $\sigma\in G_{\dR}(\C)$, write $\sigma_{M_\infty}$ for its termwise action on the complex de Rham realization. Its transformed periods are obtained by replacing the physical Betti covector with
\[
\gamma_\sigma
=\beta_{\infty,\C}c_{M_\infty}\sigma_{M_\infty}c_{M_\infty}^{-1}.
\]
Indeed, $\gamma_\sigma c_{M_\infty}=\beta_{\infty,\C}c_{M_\infty}\sigma_{M_\infty}$. In this cumulative presentation, \eqref{eq:globalbound} is exactly the condition that the transformed covector define a bounded functional of the physical integrand. Whenever it holds, the preceding argument gives $\gamma_\sigma=c\beta_{\infty,\C}$ for some $c\in\C$, although the Galois action by itself supplies no such estimate.

Along an admissible diagonal, the weighted approximants of every fixed $q\in A_{\Q}$ converge to $qe^{-V_g}$ in $L^1(\mu_C)$. A bounded realization $L$ therefore implies convergence of the transformed periods and gives
\[
L(qe^{-V_g})
=\lim_r\per_{\C}(\sigma\cdot p_r^{\mathfrak m}(q))
=\ev_{\infty}(\sigma\cdot\mathbf p^{\mathfrak m}(q)).
\]
When $g\not\equiv0$, Theorem~\ref{thm:spatial-divergence} shows that $\sigma_{\mathrm{rot}}$ cannot satisfy \eqref{eq:globalbound} along an admissible diagonal.

Conversely, every scalar multiple of integration is bounded on $E$.

\begin{corollary}[The subgroup of physically continuous transformations]\label{cor:physical-subgroup}
For the chosen cumulative presentation, set
\begin{equation}\label{eq:physical-subgroup}
H_{\mathrm{phys}}
=\{\sigma\in G:\gamma_\sigma\in\C^\times\beta_{\infty,\C}\}.
\end{equation}
This is precisely the set of group points satisfying \eqref{eq:globalbound} with $\gamma=\gamma_\sigma$ and a constant that may depend on $\sigma$. It is a subgroup of $G$, and the uniquely determined scalar in
\[
\gamma_\sigma=\chi(\sigma)\beta_{\infty,\C}
\]
defines a group character on complex points, $\chi:H_{\mathrm{phys}}\to\C^\times$. Consequently, $H_{\mathrm{phys}}$ is the largest subgroup all of whose elements admit a bounded realization on the common physical $L^1$ space for this presentation.

Along an admissible diagonal, every $\sigma\in H_{\mathrm{phys}}$ satisfies
\begin{equation}\label{eq:physical-subgroup-limits}
\lim_r\per_{\C}(\sigma\cdot p_r^{\mathfrak m}(q))
=\chi(\sigma)\mcI_g(q)
\qquad(q\in A_{\Q}).
\end{equation}
The kernel of $\chi$ consists exactly of the transformations preserving all physical periods of polynomial insertions at every level of the chosen tower. If $g\not\equiv0$ and the tower is an admissible diagonal, then $\sigma_{\mathrm{rot}}\notin H_{\mathrm{phys}}$, so this subgroup is proper.
\end{corollary}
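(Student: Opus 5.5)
The plan is to derive every assertion from the argument preceding the statement, namely the implication \eqref{eq:globalbound} $\Rightarrow$ $\gamma=c\beta_{\infty,\C}$. We then check the group law directly from the formula for $\gamma_\sigma$. The first step is the equivalence. If $\sigma$ satisfies \eqref{eq:globalbound} with $\gamma=\gamma_\sigma$, the Hahn--Banach and saturation argument built on Theorem~\ref{thm:saturation} gives $\gamma_\sigma=c\beta_{\infty,\C}$. Conversely, if $\gamma_\sigma=c\beta_{\infty,\C}$, then \eqref{eq:presentationidentity} gives $\gamma_\sigma c_{M_\infty}\alpha_{\infty,\C}(d)=cI_\infty j_\infty(d)$, which is bounded by $|c|\,\|j_\infty(d)\|_1$.

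It remains to see why $c\neq0$. Suppose $\gamma_\sigma=0$. Then $\beta_{\infty,\C}c_{M_\infty}\sigma_{M_\infty}=0$. Since $\sigma_{M_\infty}$ is invertible, this would force $\beta_{\infty,\C}c_{M_\infty}=0$. That is false, because $\beta_{\infty,\C} c_{M_\infty}\alpha_{\infty,\C}$ applied to the constant polynomial $1$ at any level gives a partition period $\E[w_m]>0$. So the bounded set coincides with $H_{\mathrm{phys}}$, and $\chi(\sigma)$ is uniquely determined because $\beta_{\infty,\C}\neq0$.

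For the subgroup and character property, the step I expect to need care is matching the convention $\sigma\cdot(\tau\cdot p)=(\sigma\tau)\cdot p$ with the termwise action. Since $\sigma\mapsto\sigma_{M_\infty}$ is a representation, we have $\gamma_{\sigma\tau}=\beta_{\infty,\C}c\,\sigma_{M}\tau_{M}c^{-1}$. If $\sigma,\tau\in H_{\mathrm{phys}}$, then
\[
\gamma_{\sigma\tau}=\gamma_\sigma c\,\tau_M c^{-1}=\chi(\sigma)\gamma_\tau=\chi(\sigma)\chi(\tau)\beta_{\infty,\C}.
\]
The identity element gives $\gamma_1=\beta_{\infty,\C}$. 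For inverses, apply the same computation to $\beta_{\infty,\C}=\gamma_{\sigma\sigma^{-1}}=\chi(\sigma)\gamma_{\sigma^{-1}}$. This shows $\sigma^{-1}\in H_{\mathrm{phys}}$ and $\chi(\sigma^{-1})=\chi(\sigma)^{-1}$. Hence $\chi$ is a character. Maximality is immediate: any subgroup all of whose elements admit bounded realizations lies in $H_{\mathrm{phys}}$ by the equivalence.

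For the limits \eqref{eq:physical-subgroup-limits}, fix $q$ and use Proposition~\ref{prop:diagonal} together with the spectator identification of Proposition~\ref{prop:spectators}. These give $\per_{\C}(\sigma\cdot p_r^{\mathfrak m}(q))=\gamma_\sigma c_{M_\infty}\alpha_{\infty,\C}(d_r)$, where $d_r$ carries $q_r$ in the $r$th slot. This expression equals $\chi(\sigma)\E[q_rw_r]$, which tends to $\chi(\sigma)\mcI_g(q)$. The kernel statement also follows from the equivalence. The condition $\chi(\sigma)=1$ means $\gamma_\sigma=\beta_{\infty,\C}$, that is, $\beta_{\infty,\C}c\,\sigma_M v=\beta_{\infty,\C}c\,v$ for all de Rham vectors $v$. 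By surjectivity of each $a_i$ (Lemma~\ref{lem:top}), this is exactly the statement that $\per_{\C}(\sigma\cdot[N_i,\beta_i,a_i(P)])=\per[N_i,\beta_i,a_i(P)]$ for every polynomial $P$ and every level $i$.

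Finally, for properness, note that if $\sigma_{\mathrm{rot}}$ belonged to $H_{\mathrm{phys}}$, the limit formula with $q=1$ would make its transformed partition periods converge to $\chi(\sigma_{\mathrm{rot}})\mcI_g(1)$. This contradicts Theorem~\ref{thm:spatial-divergence} when $g\not\equiv0$ along an admissible diagonal.
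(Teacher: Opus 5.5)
Your proposal is correct and follows the paper's proof essentially step by step. Both arguments use the same ingredients:
\begin{itemize}
\item nonvanishing of $\gamma_\sigma$, from invertibility of $\sigma_{M_\infty}$ and the positive partition period;
\item the equivalence with the bound~\eqref{eq:globalbound}, via the saturation-based scalar characterization;
\item the group law, computed from $\gamma_{\sigma\tau}=\gamma_\sigma c_{M_\infty}\tau_{M_\infty}c_{M_\infty}^{-1}$;
\item the limits, from the levelwise identity $\gamma_\sigma=\chi(\sigma)\beta_{\infty,\C}$;
\item the kernel description, via surjectivity of the class maps;
\item properness, from Theorem~\ref{thm:spatial-divergence}.
\end{itemize}
Your appeal to Proposition~\ref{prop:spectators} in the limit step is harmless but unnecessary, since along the diagonal $p_r^{\mathfrak m}(q)$ already lives in the $r$th summand of the cumulative presentation.
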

\begin{proof}
The covector $\beta_{\infty,\C}$ is nonzero because its value on the partition frame at any level, after comparison, is the positive physical partition integral. Since $c_{M_\infty}\sigma_{M_\infty}c_{M_\infty}^{-1}$ is an automorphism, $\gamma_\sigma$ is also nonzero. The scalar characterization proved above therefore shows that \eqref{eq:globalbound} implies membership in $H_{\mathrm{phys}}$, with a unique nonzero scalar $\chi(\sigma)$. Conversely, membership gives
\[
\gamma_\sigma c_{M_\infty}\alpha_{\infty,\C}(d)
=\chi(\sigma)I_\infty j_\infty(d),
\]
so \eqref{eq:globalbound} holds with $C=|\chi(\sigma)|$.

Write $\widetilde\sigma=c_{M_\infty}\sigma_{M_\infty}c_{M_\infty}^{-1}$ and abbreviate $\beta_{\infty,\C}$ to $\beta$. For $\sigma,\tau\in H_{\mathrm{phys}}$, the action convention gives
\[
\beta\widetilde\sigma\widetilde\tau
=\chi(\sigma)\beta\widetilde\tau
=\chi(\sigma)\chi(\tau)\beta,
\qquad
\beta\widetilde\sigma^{-1}=\chi(\sigma)^{-1}\beta.
\]
Thus $H_{\mathrm{phys}}$ is a subgroup and $\chi$ is a character, while the equivalence with \eqref{eq:globalbound} proves its stated maximality. Applying the bounded functional $\chi(\sigma)I_\infty$ to the convergent weighted approximants gives \eqref{eq:physical-subgroup-limits}. Surjectivity of the polynomial-class maps identifies $\ker\chi$ with preservation of every finite physical period, and Theorem~\ref{thm:spatial-divergence} excludes $\sigma_{\mathrm{rot}}$.
\end{proof}

After comparison, $H_{\mathrm{phys}}$ is the stabilizer of the line $\C\beta_{\infty,\C}$ in the algebraic dual of the direct-sum Betti realization. Its maximality concerns physical $L^1$-continuity rather than convergence alone. It depends on the full cumulative presentation. Section~\ref{sec:scope} shows that changing only the first level can change this subgroup while leaving the sequential symbols and their limiting evaluations unchanged.

The constructive functional $q\mapsto\mcI_g(q)$ and the divergent transformation $\sigma_{\mathrm{rot}}$ of Theorem~\ref{thm:spatial-divergence} are common to every admissible diagonal. Independence of the sequential symbol algebra and its coaction domain from the admissible diagonal is not established here, and no motive or Galois subgroup intrinsic to the continuum theory is asserted beyond these diagonal-independent conclusions.

\section{A quartic example of convergence without physical \texorpdfstring{$L^1$}{L1}-continuity}\label{sec:scope}
We now give a quartic example in which all Galois translates of fixed-observable sequences converge, but physical $L^1$-continuity fails for some transformations. We also show that changing only the first level can change the subgroup satisfying this condition without changing the sequential symbols or their limiting evaluations.

\subsection{A quartic model with a non-scalar limiting Galois profile}\label{subsec:quarticprofile}
Take $\Q$ as the coefficient field, let $\gamma_1$ be the standard Gaussian measure on $\R$, and put
\begin{equation}\label{eq:quarticmodel}
f(x)=\frac1{12}H_4(x)=\frac{x^4-6x^2+3}{12},
\qquad S(x)=\frac{x^2}{2}+f(x)=\frac{x^4}{12}+\frac14.
\end{equation}
Here $H_4$ is the Wick--Hermite polynomial in \eqref{eq:wickquartic}. The choice of coupling cancels the quadratic term, leaving explicitly computable periods. For $P\in\Q[x]$, write
\[
I(P)=\int_{\R}P(x)e^{-f(x)}\,d\gamma_1(x)
=\frac{e^{-1/4}}{\sqrt{2\pi}}\int_{\R}P(x)e^{-x^4/12}\,dx.
\]
It suffices to compare $Z=I(1)$ with $M_2=I(x^2)$ because proportionality to physical integration would require the same factor for both. Here $Z$ is the partition function and $M_2/Z$ the normalized second moment.

The finite normalized motive of Section~\ref{subsec:normalized} for this integral is
\[
N_*=H^1_{\exp}(\A^1_{\Q},S)\otimes G_0^\vee,
\qquad G_0=H^1_{\exp}(\A^1_{\Q},x^2/2),
\]
where $G_0$ is the Gaussian line. With the insertion class $a_*(P)$ and real integration covector $\beta_*$, we have
\[
p_*(P)=[N_*,\beta_*,a_*(P)],\qquad
\per(p_*(P))=I(P).
\]
Put $p_0=p_*(1)$ and $p_2=p_*(x^2)$ in $P_*^{\mathfrak m}=\mathcal O(T_*)$, where $T_*$ is the comparison torsor of $\Cat_*=\langle G_0,N_*\rangle_\otimes$, and write $G_{*,\dR}$ for its de Rham group.

The scaling monodromy of Lemma~\ref{lem:scaling-monodromy} gives an explicit non-scalar transform and proves the independence of the two even symbols. The gamma evaluations below also yield an arithmetic refinement.

\begin{proposition}\label{prop:quarticprofile}
For the model \eqref{eq:quarticmodel}, the following statements hold.
\begin{enumerate}
\item The de Rham realization of $N_*$ has dimension three over $\Q$, with basis $a_*(1)$, $a_*(x)$, and $a_*(x^2)$. When period symbols are formed using the real integration covector $\beta_*$, the odd class $a_*(x)$ has zero symbol, whereas $p_0=p_*(1)$ and $p_2=p_*(x^2)$ are linearly independent over $\Q$. Consequently, every symbol $p_*(P)$, with $P\in\Q[x]$, is a unique $\Q$-linear combination of $p_0$ and $p_2$. The space
\[
W_*=\{p_*(P):P\in\Q[x]\}
=\Q p_0\oplus\Q p_2
\]
is stable under the coaction of $\mathcal O(G_{*,\dR})$. In the terminology of Theorem~\ref{thm:profiles}, the visible quotient $N_*^{\mathrm{vis}}$ therefore has rank two.
\item Writing $Z=\per(p_0)$ and $M_2=\per(p_2)$, one has
\begin{equation}\label{eq:quarticgamma}
\begin{aligned}
Z&=\frac{e^{-1/4}}{\sqrt{2\pi}}\frac{12^{1/4}}2\Gamma(1/4),\\
M_2&=\frac{e^{-1/4}}{\sqrt{2\pi}}\frac{12^{3/4}}2\Gamma(3/4),
\end{aligned}
\qquad
\frac{M_2}{Z}=\frac{\sqrt{24}\,\pi}{\Gamma(1/4)^2}\notin\overline{\Q}.
\end{equation}
\item For $\sigma\in G_{*,\dR}(\C)$ set
\[
Z_\sigma=\per_{\C}(\sigma\cdot p_0),\qquad
M_{2,\sigma}=\per_{\C}(\sigma\cdot p_2).
\]
The construction of Lemma~\ref{lem:scaling-monodromy} gives a point $\sigma_*\in G_{*,\dR}(\C)$ satisfying
\[
Z_{\sigma_*}=iZ,\qquad M_{2,\sigma_*}=-iM_2.
\]
Thus, for $\sigma=\sigma_*$,
\begin{equation}\label{eq:quarticdeterminant}
Z M_{2,\sigma}-M_2 Z_\sigma\ne0.
\end{equation}
In particular, the two physical periods do not transform by a common scalar for every group point.
\end{enumerate}
\end{proposition}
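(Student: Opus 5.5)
The plan is to prove the three parts in order, using the explicit one-variable action $S(x)=x^4/12+1/4$.

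For part (1), Lemma~\ref{lem:top} identifies $\omega_{\dR}(N_*)$ with $\Q[x]/(P'-(x^3/3)P)\Q[x]$. Taking $P=x^j$ gives $[x^{j+3}]=3j[x^{j-1}]$. Hence every class reduces to the span of $[1],[x],[x^2]$. Independence of these three classes is a degree argument. A nonzero image $P'-(x^3/3)P$ has degree $\deg P+3\ge3$, so no nonzero combination of $1,x,x^2$ is exact.

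The odd class has zero symbol by the reflection argument of Example~\ref{ex:parity}. The reflection $x\mapsto-x$ preserves $S$ and $x^2/2$, and it fixes $\beta_*$. Applying the recurrence to $x^{2j+1}$ shows the whole odd part reduces to multiples of $[x]$, so every $p_*(P)$ lies in $\Q p_0+\Q p_2$.

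Linear independence of $p_0,p_2$ over $\Q$ follows from part (3), and does not require the transcendence in part (2). If $ap_0+bp_2=0$ with $a,b\in\Q$, apply $\per$ and $\per_\C(\sigma_*\cdot{-})$. This gives $aZ+bM_2=0$ and $iaZ-ibM_2=0$. With $Z,M_2>0$ this forces $a=b=0$.

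Coaction stability of $W_*$ comes from formula \eqref{eq:coaction}. Every first factor $[N_*,\beta_*,e_i]$ is a symbol $p_*(P)$, because $a_*$ is surjective. The visible quotient has rank two: $U_*$ contains the odd line, since its symbol vanishes, and $\dim\omega_{\dR}(N_*)-\dim\omega_{\dR}(U_*)$ equals the dimension of the span of the symbols, which is $2$ by Theorem~\ref{thm:profiles}(2).

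For part (2), substitute $u=x^4/12$ in the integrals. This gives $\int_\R x^{2j}e^{-x^4/12}dx=\tfrac12 12^{(2j+1)/4}\Gamma((2j+1)/4)$. Combining with the reflection formula $\Gamma(1/4)\Gamma(3/4)=\pi\sqrt2$ yields
\[
\frac{M_2}{Z}=\sqrt{12}\,\frac{\Gamma(3/4)}{\Gamma(1/4)}=\frac{\sqrt{24}\,\pi}{\Gamma(1/4)^2}.
\]
For non-algebraicity I would invoke Chudnovsky's theorem that $\pi$ and $\Gamma(1/4)$ are algebraically independent. This is external input, and I expect it to be the one genuinely non-elementary citation.

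For part (3), take $\sigma_*$ to be $\sigma_{\mathrm{rot}}$ from Lemma~\ref{lem:scaling-monodromy} restricted to $\Cat_*$, and repeat Step~2 of Theorem~\ref{thm:spatial-divergence} with $D=1$. Under $t=e^{i\theta}$, the contours $e^{-i\theta/4}\R$ keep $\Re(tS)\to+\infty$, since $tS(e^{-i\theta/4}x)=x^4/12+t/4$. The endpoint is $-i\R$.

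The Gaussian line returns with sign $-1$ and the quartic potential $S$ is unchanged. The transported integral of $P(x)e^{-S}dx$ is therefore $\int_\R P(-iy)e^{-S(y)}(-i)\,dy$, and dividing by the Gaussian sign gives the factor $i$. Thus $Z_{\sigma_*}=iZ$, and $M_{2,\sigma_*}=i\cdot(-1)M_2=-iM_2$. Consequently $ZM_{2,\sigma_*}-M_2Z_{\sigma_*}=-2iZM_2\ne0$.

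The main obstacle is bookkeeping. I need to check that the orientation of $-i\R$ and the reciprocal Gaussian monodromy combine exactly as in Theorem~\ref{thm:spatial-divergence}, and that the action convention \eqref{eq:rotation-galois-point} produces the transported cycle rather than its inverse. An error in the inverse would send $i$ to $-i$ in both values, which does not affect \eqref{eq:quarticdeterminant}. I would verify it against the explicit integral formula stated after \eqref{eq:rotation-galois-point}.
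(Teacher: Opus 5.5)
Your proposal is correct and follows the paper's proof essentially step for step. It uses the recurrence $[x^{n+3}dx]=3n[x^{n-1}dx]$ with the degree argument, reflection for the odd class, linear independence of $p_0,p_2$ from the evaluations at the identity and at $\sigma_*$, the substitution $t=x^4/12$ with Euler reflection and Chudnovsky's theorem, and the $D=1$ contour rotation $e^{-i\theta/4}\R$ with Gaussian sign $-1$ giving $\per_{\C}(\sigma_*\cdot p_*(P))=iI(P(-ix))$. The one detail the paper spells out and you leave implicit is that a relation $r\Gamma(1/4)^2-\sqrt{24}\,\pi=0$ with $r\in\overline{\Q}$ must first be turned into a nonzero relation over $\Q$ (for instance by a product over conjugates) before Chudnovsky's theorem applies.
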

\begin{proof}
\medskip\noindent\textit{Step 1. De Rham quotient and parity.}
For $S(x)=\frac{x^4}{12}+\frac14$ one has $S'(x)=\frac{x^3}{3}$,
so the twisted derivative operator is
\[
BP=P'-\frac{x^3}{3}P.
\]
Applying this operator to $P=1$ gives $[x^3dx]=0$, while
applying it to $P=x^n$, $n\ge1$, gives
\[
0=[nx^{n-1}-x^{n+3}/3]dx,
\]
hence the recurrence $[x^{n+3}dx]=3n[x^{n-1}dx]$.
Repeated use reduces every monomial class to a linear combination of $[dx]$, $[xdx]$ and $[x^2dx]$, so the de Rham rank is at most three.

To prove independence, suppose $BP$ has degree at most two.  If $P\ne0$ has degree $d$, then the term $-(x^3/3)P$ has degree $d+3$, whereas $P'$ has degree $d-1$.  These degrees cannot cancel, so
\[
\deg BP=d+3\ge3.
\]
No nonzero polynomial of degree at most two can therefore lie in
the image of $B$, so the three displayed classes are independent
and the de Rham rank is exactly three. This rank is unchanged by
tensoring with the one-dimensional Gaussian dual.

Reflection $\iota:x\mapsto-x$ preserves both the interacting action and the Gaussian action.  On the normalized motive, the sign of the one-dimensional volume form is cancelled by the sign of the Gaussian dual frame, so
\[
\iota a_*(P)=a_*(P(-x)).
\]
Change of variables on the real integration cycle gives
\[
\beta_*\iota=\beta_*.
\]
The odd subspace of the three-dimensional de Rham realization, spanned by $a_*(x)$, is the realization of a rank-one motivic subobject annihilated by $\beta_*$.  Consequently every odd framed symbol vanishes, so
\[
p_*(P)=0\qquad(P(-x)=-P(x)).
\]
The recurrence above reduces every even polynomial class to a combination of $a_*(1)$ and $a_*(x^2)$.  Hence
\[
W_*\subseteq\Q p_0+\Q p_2,
\]
and the visible rank is at most two.

\medskip\noindent\textit{Step 2. Evaluation of the even periods.}
For $j=0,1$, evenness gives
\[
\int_{\R}x^{2j}e^{-x^4/12}\,dx
=2\int_0^\infty x^{2j}e^{-x^4/12}\,dx.
\]
Under the substitution $t=x^4/12$, we have
\[
x=12^{1/4}t^{1/4},
\qquad
dx=\frac{12^{1/4}}4t^{-3/4}\,dt,
\]
and therefore
\[
2x^{2j}dx
=\frac{12^{(2j+1)/4}}2
 t^{(2j+1)/4-1}\,dt.
\]
Thus
\[
\int_{\R}x^{2j}e^{-x^4/12}\,dx
=\frac{12^{(2j+1)/4}}2
\Gamma\left(\frac{2j+1}{4}\right).
\]
Multiplying by $e^{-1/4}(2\pi)^{-1/2}$, which accounts for the
constant term in the action and the Gaussian normalization, gives
the formulas for $Z$ and $M_2$ in \eqref{eq:quarticgamma}.

Euler's reflection identity
\[
\Gamma(1/4)\Gamma(3/4)=\frac{\pi}{\sin(\pi/4)}=\pi\sqrt2
\]
gives
\[
\frac{M_2}{Z}
=\sqrt{12}\,\frac{\Gamma(3/4)}{\Gamma(1/4)}
=\frac{\sqrt{24}\,\pi}{\Gamma(1/4)^2}.
\]
Chudnovsky proved the algebraic independence of $\pi$ and $\Gamma(1/4)$ \cite{Chudnovsky}. The same conclusion follows from the specialization of Nesterenko's stronger theorem stated on p.~8 of the arXiv version of \cite{Fonseca}, immediately after Theorem~3.2 (the original source is \cite{Nesterenko}). Suppose for contradiction that
\[
r:=\frac{M_2}{Z}\in\overline{\Q}.
\]
Then
\[
r\Gamma(1/4)^2-\sqrt{24}\,\pi=0
\]
is a nonzero polynomial relation between $\Gamma(1/4)$ and $\pi$ with algebraic coefficients.  A relation over a finite algebraic extension can be converted to a nonzero relation over $\Q$ by taking the product over Galois conjugates of its coefficients.  This contradicts algebraic independence and proves that $M_2/Z$ is transcendental.

\medskip\noindent\textit{Step 3. Monodromy and linear independence.}
Applying Lemma~\ref{lem:scaling-monodromy} over $\Q$, restricting
to $\Cat_*$, and transporting the inverse of the forward Betti
monodromy through comparison as in \eqref{eq:rotation-galois-point} gives
$\sigma_*\in G_{*,\dR}(\C)$. Along $t=e^{i\theta}$, the contours
$\Gamma_\theta=e^{-i\theta/4}\R$ satisfy
\[
e^{i\theta}S(e^{-i\theta/4}x)=\frac{x^4}{12}+\frac{e^{i\theta}}4,
\]
and therefore remain in rapid-decay directions throughout the loop. At $\theta=2\pi$, one has $S(-ix)=S(x)$, and the volume form contributes $-i$. The Gaussian factor has monodromy $-1$, as in the proof of Theorem~\ref{thm:spatial-divergence}. Thus, extending $I$ complex-linearly,
\begin{equation}\label{eq:quartic-rotation}
\per_{\C}(\sigma_*\cdot p_*(P))
=\frac{-i}{-1}I(P(-ix))=iI(P(-ix)).
\end{equation}
Taking $P=1$ and $P=x^2$ gives the stated values $iZ$ and $-iM_2$. Since $Z,M_2>0$,
\[
Z M_{2,\sigma_*}-M_2 Z_{\sigma_*}=-2iZM_2\ne0,
\]
which proves \eqref{eq:quarticdeterminant}.

The same transformation proves independence of the symbols. If $up_0+vp_2=0$ with $u,v\in\Q$, evaluation at the physical comparison and after applying $\sigma_*$ gives
\[
uZ+vM_2=0,\qquad iuZ-ivM_2=0.
\]
Hence $u=v=0$, and
\[
W_*=\Q p_0\oplus\Q p_2.
\]
Theorem~\ref{thm:profiles} identifies the kernel of the symbol map with the de Rham realization of the maximal invisible subobject, so the dimension of this framed span is exactly the rank of the visible quotient. Hence
\[
\operatorname{rank}N_*^{\mathrm{vis}}=2.
\]
The fixed-covector coaction formula \eqref{eq:coaction} shows that the span of all $[N_*,\beta_*,v]$ is stable under the finite de Rham coaction, making $W_*$ an ordinary finite-dimensional $\mathcal O(G_{*,\dR})$-comodule.
\end{proof}

\paragraph{From the finite motive to a sequential example.}
Adjoin independent Gaussian variables while keeping the interaction in the first coordinate, so that the action at level $r$ is $S(x_1)+\frac12\sum_{j=2}^r x_j^2$. Writing $N_r$ for its normalized motive, Proposition~\ref{prop:spectators} gives
\[
N_r\simeq N_*\qquad(r\ge1).
\]
Once a level contains all variables of a fixed observable, integrating out the spectators gives the same one-variable symbol at every later level. For instance, $X_1^2X_2^2$ gives $p_2$ from level $2$ onward because $\E[X_2^2]=1$.

\begin{corollary}[A stabilized sequential example]\label{cor:quarticsequence}
Let $H_*=\ell^2(\N)$ with orthonormal basis $(e_j)_{j\ge1}$, let $X$ be the associated isonormal Gaussian process in the sense of Section~\ref{subsec:gaussian-toolbox}, and put $X_j=X(e_j)$. Thus $(X_j)_{j\ge1}$ are independent standard Gaussian variables. Take the fixed observable algebra $\Q[X_1,X_2,\ldots]$, and at level $r$ use
\begin{equation}\label{eq:quartictower}
\begin{aligned}
K_r&=\Span(e_1,\ldots,e_r), & f_r&=f(X_1),\\
S_r(x)&=\frac{x_1^4}{12}+\frac14+\frac12\sum_{j=2}^r x_j^2,
& x&=(x_1,\ldots,x_r).
\end{aligned}
\end{equation}
Here $S_r=\frac12\sum_{j=1}^r x_j^2+f(x_1)$, with the displayed form following from the cancellation in \eqref{eq:quarticmodel}. Under the canonical spectator identifications of Proposition~\ref{prop:spectators}, every fixed-observable symbol sequence is eventually constant. The resulting sequential algebra is
\[
B_*=\Q[p_0,p_2]\subset P_*^{\mathfrak m},
\]
embedded as constant sequences in the common category $\Cat_*$ and equipped with an ordinary coaction of $\mathcal O(G_{*,\dR})$. After extension to $\C$, every Galois translate has a convergent numerical period sequence, and the limiting profiles recover those of Proposition~\ref{prop:quarticprofile}. In particular, at $\sigma_*$ the limits associated with $1$ and $X_1^2$ are not a common scalar multiple of their physical values.
\end{corollary}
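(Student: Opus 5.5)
The plan is to reduce everything to the one-variable motive $N_*$ through the spectator cancellation of Proposition~\ref{prop:spectators}, and then to read off the Galois statements from Proposition~\ref{prop:quarticprofile}.

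\emph{Stabilization.} At level $r$ the action is $S(x_1)+\mathfrak q_{r-1}(x_2,\ldots,x_r)$, and $f$ is bounded below. Proposition~\ref{prop:spectators} therefore gives isomorphisms $\iota_r:N_r\xrightarrow{\sim}N_*$ with the following properties: they send $a_r(P)$ to $a_*(\mathsf C(P))$, where $\mathsf C$ integrates out $x_2,\ldots,x_r$ against the standard Gaussian, and they satisfy $\beta_r=\beta_*\circ\omega_\B(\iota_r)$. These isomorphisms compose compatibly under successive enlargements. By functoriality of framed matrix coefficients under motivic isomorphisms,
\[
[N_r,\beta_r,a_r(q_r)]=[N_*,\beta_*,a_*(\mathsf C(q_r))].
\]
Fix $q\in\Q[X_1,\ldots,X_N]$. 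For $r\ge N$ we have $q_r=q$, and $\mathsf C(q)$ is the polynomial in $x_1$ obtained by replacing each monomial in $x_2,\ldots,x_N$ by its Gaussian moment. This polynomial does not depend on $r$, so $p_r^{\mathfrak m}(q)=p_*(\mathsf C(q))$ for all $r\ge N$ and the sequence is eventually constant.

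\emph{Identifying $B_*$.} Proposition~\ref{prop:quarticprofile}(1) says every $p_*(P)$ lies in $\Q p_0\oplus\Q p_2$. The generators of the sequential algebra are therefore constant sequences with values in $W_*$, and $X_1^2$ gives $p_2$. Taking products of generators corresponds to taking products of constant sequences, so the sequential algebra is $\Q[p_0,p_2]$, embedded diagonally. This embedding is injective into $\mathcal S(P)$: a constant sequence is eventually zero only if its value is zero.

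\emph{Coaction and convergence.} The subspace $W_*$ is a comodule by Proposition~\ref{prop:quarticprofile}(1). Since the coaction is an algebra homomorphism, $\Q[p_0,p_2]$ is stable under it. This stable subalgebra supplies an ordinary coaction $B_*\to B_*\otimes\mathcal O(G_{*,\dR})$, and in the sequence picture this coaction is componentwise on constant sequences, so the finite-span criterion of Lemma~\ref{lem:seqtensor} holds trivially. For $\sigma\in G_{*,\dR}(\C)$, the translate $\sigma\cdot b$ of a constant sequence is constant. Its numerical periods are therefore a constant sequence and converge to $\per_\C(\sigma\cdot b)$, which is the profile of Proposition~\ref{prop:quarticprofile}. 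At $\sigma_*$, part~(3) gives the limits $iZ$ and $-iM_2$ for the observables $1$ and $X_1^2$. A common scalar $c$ with $iZ=cZ$ and $-iM_2=cM_2$ would force $c=i=-i$, which is impossible.

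\emph{Main obstacle.} The only delicate point is the one just used. We need $B_*$ to sit inside $\Cat_*$, with the translates computed in $G_{*,\dR}$ rather than in the group of a larger category. For this we rely on the canonical identifications $\iota_r$, which make all the $N_r$ literally equal to the single object $N_*$ in the framed-period algebra $P_*^{\mathfrak m}$. We must also check that the de Rham group action commutes with the motivic isomorphisms $\iota_r$. This compatibility is exactly naturality of tensor automorphisms.
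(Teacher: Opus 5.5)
Your proposal is correct and follows essentially the same route as the paper. Spectator cancellation reduces each fixed-observable symbol to the constant value $p_*(\mathsf C(q))\in W_*$, the comodule structure of $W_*$ gives the coaction on $\Q[p_0,p_2]$, and constancy of the translated sequences together with Proposition~\ref{prop:quarticprofile}(3) gives the non-scalar limits at $\sigma_*$. You also note explicitly that the coaction commutes with the identifications $\iota_r$ by naturality of tensor automorphisms; the paper only asserts this step as ``the same coaction occurs at every sufficiently large cutoff,'' and your remark makes it precise.
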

\begin{proof}
For a fixed observable $q\in\Q[X_1,X_2,\ldots]$, choose $m$
so that $q$ depends only on $X_1,\ldots,X_m$.
At every level $r\ge m$, these coordinates belong to $K_r$,
and the action consists of the fixed quartic action in $X_1$
together with independent Gaussian spectators
$X_2,\ldots,X_r$. Proposition~\ref{prop:spectators} therefore
identifies the normalized framed symbol of $q$ with the
one-dimensional symbol
\begin{equation}\label{eq:spectator-observable-reduction}
p_*(Q_q),
\qquad
Q_q(X_1)=\E[q\mid X_1].
\end{equation}
Because the spectator variables are independent standard Gaussians, conditional expectation replaces each spectator monomial by its Gaussian moment, which is an integer for even powers and zero for odd powers. The rational coefficients of $q$ therefore give $Q_q\in\Q[X_1]$.
Proposition~\ref{prop:quarticprofile} gives
\[
p_*(Q_q)\in W_*=\Q p_0\oplus\Q p_2.
\]
Thus every fixed-observable sequence is eventually constant with value in this two-dimensional space.

Conversely, the fixed observables $1$ and $X_1^2$ give the constant sequences with values $p_0$ and $p_2$. Since constant sequences embed into the quotient by eventual equality, the fixed-observable sequential algebra is $B_*=\Q[p_0,p_2]\subset P_*^{\mathfrak m}$. Here $\Q[p_0,p_2]$ denotes the generated subalgebra and does not assert algebraic independence of the generators.

The finite-dimensional space $W_*$ is an $\mathcal O(G_{*,\dR})$-comodule by Proposition~\ref{prop:quarticprofile}. Since the coaction is an algebra homomorphism, it restricts to the generated algebra as
\[
B_*\longrightarrow B_*\otimes_\Q\mathcal O(G_{*,\dR}).
\]
Under the spectator identifications, this same coaction occurs at every sufficiently large cutoff. For each element of $B_*$, its tensor expression therefore uses finitely many de Rham coefficients independent of the cutoff, and its first tensor factors belong to $B_*$. This gives an ordinary sequential coaction of the kind considered in Section~\ref{sec:seqgalois}.

After extending scalars, write
$B_{*,\C}=B_*\otimes_\Q\C=\C[p_0,p_2]$.
This algebra is stable under $G_{*,\dR}(\C)$, and each of its
elements is represented by a constant sequence. For any fixed
$p\in B_{*,\C}$ and $\sigma\in G_{*,\dR}(\C)$, the translated
sequence is therefore constant with value $\sigma\cdot p$,
so its numerical periods converge to
$\per_{\C}(\sigma\cdot p)$.

Consequently the limiting profile map of Section~\ref{sec:seqgalois}, defined on all of $B_{*,\C}$ in this example, is
\[
\Phi_{*,\infty}(\mathbf p)(\sigma)=\per_{\C}(\sigma\cdot p),
\]
where $\mathbf p$ denotes the constant sequential class of $p$. In particular, the profiles of $\mathbf p_0$ and $\mathbf p_2$ have values $Z_\sigma$ and $M_{2,\sigma}$. At $\sigma=\sigma_*$, Proposition~\ref{prop:quarticprofile} gives
\[
Z\Phi_{*,\infty}(\mathbf p_2)(\sigma)
-M_2\Phi_{*,\infty}(\mathbf p_0)(\sigma)
=ZM_{2,\sigma}-M_2Z_\sigma\ne0.
\]
The nonzero determinant excludes a common scalar factor relating the two limits to $Z$ and $M_2$.
\end{proof}

To verify the failure of physical $L^1$-continuity on the single
interacting variable, let $\sigma$ satisfy \eqref{eq:quarticdeterminant} and consider the functional
\[
\ell_\sigma(P)=\per_{\C}(\sigma\cdot p_*(P)),
\qquad P\in\C[x],
\]
using the complex-linear extension of the symbol map. This functional annihilates every $P'-x^3P/3$. If there were a constant $C_\sigma$ such that
\[
|\ell_\sigma(P)|\le C_\sigma\norm{Pe^{-f}}_{L^1(\gamma_1)}
\quad\text{for every }P\in\C[x],
\]
then Theorem~\ref{thm:finiterigidity}, applied to $d\nu=Z^{-1}e^{-f}d\gamma_1$, would make $\ell_\sigma$ a scalar multiple of physical integration. Since $S=x^4/12+1/4$ satisfies the required quadratic lower bound, this contradicts \eqref{eq:quarticdeterminant}. Thus no such constant $C_\sigma$ exists even on polynomials of the single interacting variable, before any compatibility across cutoffs enters.

We next examine how $H_{\mathrm{phys}}$ depends on the initial
levels by considering $\tau=\sigma_*^2$ for the tower \eqref{eq:quartictower}. Transporting the quartic contour through two turns, $0\le\theta\le4\pi$, takes $e^{-i\theta/4}\R$ to $-\R$ with volume factor $-1$, while the Gaussian normalization contour $e^{-i\theta/2}\R$ returns with factor $1$. Since $S$ is even, the resulting period is
\[
\per_{\C}(\tau\cdot p_*(P))=-I(P(-x))=-I(P),
\qquad P\in\C[x].
\]
Spectator cancellation gives the same factor $-1$ for every polynomial insertion at every level of \eqref{eq:quartictower}. Thus $\tau\in H_{\mathrm{phys}}$ for this tower, with $\chi(\tau)=-1$ and bounded realization $-I_\infty$.

Now keep the spaces $K_r$ and change only the first interaction by setting
\[
f'_1=0,\qquad f'_r=f(X_1)\quad(r\ge2).
\]
The first normalized motive is now the unit, so the common category remains $\Cat_*$ and the acting group remains $G_{*,\dR}$. All fixed-observable symbol sequences agree with the original ones from level $2$ onward. Consequently, the sequential algebra $B_*$, its coaction, and all its limiting Galois evaluations are unchanged.

At the new first level the normalized partition symbol is $1$, which is fixed by $\tau$, so a scalar as in \eqref{eq:physical-subgroup} would have to equal $1$. At every later level the physical partition period is $Z>0$ and its $\tau$-transform is $-Z$, forcing that scalar to equal $-1$. Hence $\tau$ does not belong to $H_{\mathrm{phys}}$ for the modified tower. In particular, this subgroup is an invariant of the full physical presentation, not of the sequential algebra alone.

Requiring physical $L^1$-continuity only from some sufficiently
large level onward removes this dependence on finitely many
initial levels. Saturation applies to each remaining tail and
still forces the limiting evaluations to be scalar multiples of
physical integration whenever the weighted approximants converge
in $L^1$. For the spatial model,
independence from the choice of admissible diagonal remains open.

\subsection{Motivic comparisons and open questions}\label{subsec:cartier}
Our separation of symbols from evaluation follows the point of view on symbolic multiple zeta values in \cite[Sec.~9]{Cartier}. It also raises a comparison problem with Brown's motivic amplitudes and their Galois coaction \cite{BrownCosmic}. Here the interaction remains exponentiated at fixed coupling, and the finite coaction comes from the comparison torsor \cite[Sec.~2]{BrownPeriods}, \cite[Sec.~8.3]{FJ}. A relation with perturbative graph periods would have to respect summation and the coaction. Equality of resummed numerical values would not by itself identify the torsors.

Connes and Marcolli construct a universal group from a Tannakian category of equisingular flat bundles associated with perturbative renormalization \cite[Secs.~13 and~16]{ConnesMarcolli}. Its relation to the groups in Brown's construction remains unclear \cite[Sec.~0.5]{BrownCosmic}, and the present work likewise leaves comparison with that category open.

\paragraph{Questions for the spatial model.}
For a nonzero spatial cutoff, Theorem~\ref{thm:spatial-divergence} excludes the full group, while Corollary~\ref{cor:physical-subgroup} identifies the subgroup selected by physical $L^1$-continuity for the chosen presentation. How large is $H_{\mathrm{phys}}$, and which subgroups admit convergent fixed-observable periods without this continuity condition? In particular, it remains to determine whether the limiting profiles can be non-scalar when the number of interacting directions grows. A further question is which subalgebras admit the finite coaction in the sequential limit, and whether the sequential algebra and its limiting profiles are independent of the chosen admissible diagonal.

\paragraph{Acknowledgments.}
The ideas that led to this work grew out of discussions with Francis Brown in September 2022 about symmetries of the path integral compatible with Wick expansion, with the aim of relating them to the cosmic Galois action on Feynman integrals. I am grateful to him for those conversations and for encouraging me to pursue the question in two dimensions.

\enlargethispage{2\baselineskip}
\paragraph{Use of AI tools.}
The research question and motivation for this work predate the author's use of AI tools. Most of the work, including Main results~\ref{main:construction} and \ref{main:saturation}, was completed without AI assistance. Main result~\ref{main:divergence} grew out of mathematical discussions between the author and ChatGPT 5.6 Sol. The author directed these discussions and takes full responsibility for the mathematical claims, proofs, and presentation.


\begin{thebibliography}{99}
\bibitem{BrownCosmic}
F. Brown, \emph{Feynman amplitudes, coaction principle, and cosmic Galois group}, Commun. Number Theory Phys. \textbf{11} (2017), 453--556.

\bibitem{BrownPeriods}
F. Brown, \emph{Notes on motivic periods}, Commun. Number Theory Phys. \textbf{11} (2017), 557--655.

\bibitem{Cartier}
P. Cartier, \emph{A mad day's work: from Grothendieck to Connes and Kontsevich. The evolution of concepts of space and symmetry}, Bull. Amer. Math. Soc. (N.S.) \textbf{38} (2001), 389--408.

\bibitem{Chudnovsky}
G. V. Chudnovsky, \emph{Algebraic independence of constants connected with exponential and elliptic functions}, Dokl. Akad. Nauk Ukrain. SSR Ser. A (1976), no.~8, 698--701 (Russian). English summary, p.~767.

\bibitem{ConnesMarcolli}
A. Connes and M. Marcolli, \emph{Renormalization, the Riemann--Hilbert correspondence, and motivic Galois theory}, in \emph{Frontiers in Number Theory, Physics, and Geometry II}, Springer, Berlin, 2007, pp.~617--713.

\bibitem{DM}
P. Deligne and J. S. Milne, \emph{Tannakian categories}, in \emph{Hodge Cycles, Motives, and Shimura Varieties}, Lecture Notes in Math., vol.~900, Springer, Berlin, 1982, pp.~101--228.

\bibitem{Folland}
G. B. Folland, \emph{Real Analysis: Modern Techniques and Their Applications}, 2nd ed., Wiley, New York, 1999.

\bibitem{Fonseca}
T. J. Fonseca, \emph{A geometric introduction to transcendence questions on values of modular forms}, in H. Movasati, \emph{Modular and Automorphic Forms \& Beyond}, Monogr. Number Theory, vol.~9, World Scientific, Singapore, 2021, Appendix~A. Also available as \href{https://arxiv.org/abs/2011.14401v1}{arXiv:2011.14401v1} (2020).

\bibitem{FJ}
J. Fres\'an and P. Jossen, \emph{Exponential motives}, manuscript, version of 11~April 2024. \url{https://javier.fresan.perso.math.cnrs.fr/expmot.pdf}.

\bibitem{GRWLattice}
F. Gasparotto, A. Rapakoulias, and S. Weinzierl, \emph{Nonperturbative computation of lattice correlation functions by differential equations}, Phys. Rev. D \textbf{107} (2023), article no.~014502.

\bibitem{GJ}
J. Glimm and A. Jaffe, \emph{Quantum Physics: A Functional Integral Point of View}, 2nd ed., Springer, New York, 1987.

\bibitem{Hien}
M. Hien, \emph{Periods for flat algebraic connections}, Invent. Math. \textbf{178} (2009), 1--22.

\bibitem{HR}
M. Hien and C. Roucairol, \emph{Integral representations for solutions of exponential Gau\ss--Manin systems}, Bull. Soc. Math. France \textbf{136} (2008), 505--532.

\bibitem{Isserlis}
L. Isserlis, \emph{On a formula for the product-moment coefficient of any order of a normal frequency distribution in any number of variables}, Biometrika \textbf{12} (1918), 134--139.

\bibitem{Ito}
K. It\^o, \emph{Multiple Wiener integral}, J. Math. Soc. Japan \textbf{3} (1951), 157--169.

\bibitem{Janson}
S. Janson, \emph{Gaussian Hilbert Spaces}, Cambridge Tracts in Math., vol.~129, Cambridge Univ. Press, Cambridge, 1997.

\bibitem{Kallenberg}
O. Kallenberg, \emph{Foundations of Modern Probability}, 3rd ed., Probab. Theory Stoch. Model., vol.~99, Springer, Cham, 2021.

\bibitem{Stein}
G. Mijoule, M. Rai\v{c}, G. Reinert, and Y. Swan, \emph{Stein's density method for multivariate continuous distributions}, Electron. J. Probab. \textbf{28} (2023), article no.~59.

\bibitem{Mityagin}
B. S. Mityagin, \emph{The zero set of a real analytic function}, Math. Notes \textbf{107} (2020), 529--530.

\bibitem{Nelson}
E. Nelson, \emph{The free Markoff field}, J. Funct. Anal. \textbf{12} (1973), 211--227.

\bibitem{Nesterenko}
Yu. V. Nesterenko, \emph{Modular functions and transcendence questions}, Sb. Math. \textbf{187} (1996), 1319--1348.

\bibitem{Nourdin}
I. Nourdin, \emph{Lectures on Gaussian approximations with Malliavin calculus}, in \emph{S\'eminaire de Probabilit\'es XLV}, Lecture Notes in Math., vol.~2078, Springer, Heidelberg, 2013, pp.~3--89. 

\bibitem{Rosen}
J. Rosen, \emph{Sequential periods of the crystalline Frobenius}, preprint, 2018. \href{https://arxiv.org/abs/1805.01885}{arXiv:1805.01885}.

\bibitem{Ryan}
R. A. Ryan, \emph{Introduction to Tensor Products of Banach Spaces}, Springer Monogr. Math., Springer, London, 2002.

\bibitem{Sampedro}
J. C. Sampedro, \emph{On the $L^p$-spaces of projective limits of probability measures}, J. Theoret. Probab. \textbf{37} (2024), 2665--2703. 

\bibitem{Simon}
B. Simon, \emph{The $P(\phi)_2$ Euclidean (Quantum) Field Theory}, Princeton Ser. Phys., Princeton Univ. Press, Princeton, NJ, 1974.

\bibitem{WeinzierlLattice}
S. Weinzierl, \emph{Correlation functions on the lattice and twisted cocycles}, Phys. Lett. B \textbf{805} (2020), article no.~135449.
\enlargethispage{2\baselineskip}
\end{thebibliography}
\end{document}